\documentclass[12pt]{article}

\usepackage{amsthm,amsfonts, amsbsy, amssymb,amsmath,graphicx}
\usepackage{graphics}
\usepackage[english]{babel}
\usepackage{graphicx}
\usepackage{lineno}
\usepackage{enumerate}
\usepackage{tikz}
\usepackage{hyperref}
\usepackage{color}
\usepackage{tikz}
%\DeclareGraphicsRule{.JPG}{eps}{*}{`jpeg2ps #1}

\hypersetup{colorlinks=true, linkcolor=blue, citecolor=red,urlcolor=blue}

%%%%asi vamos a definir los TEOREMAS, lemas, etc, etc, %%%%%

\setlength{\parindent}{0.3in}
\newtheorem{remark}{Remark}[section]

\newtheorem{theorem}[remark]{Theorem}
\newtheorem{proposition}[remark]{Proposition}
\newtheorem{observation}[remark]{Observation}

\newtheorem{corollary}[remark]{Corollary}

\addtolength{\hoffset}{-1.7cm}
\addtolength{\textwidth}{3.5cm}%derecho
\addtolength{\voffset}{-1.7cm} \addtolength{\textheight}{2cm}

\title{Total Roman $\{2\}$-domination in graphs \footnote{This is an original manuscript of an article published by Taylor \& Francis in Quaestiones Mathematicae, available online: http://www.tandfonline.com/10.2989/16073606.2019.1695230.}}

\author{Suitberto Cabrera Garc\'ia$^{(1)}$, Abel Cabrera Mart\'inez$^{(2)}$,\\ Frank A. Hern\'andez Mira $^{(3)}$, Ismael G. Yero $^{(4)}$\\
\\
$^{(1)}$ {\small Universitat Polit\'ecnica de Valencia}\\{\small Departamento de Estad\'istica e Investigaci\'on Operativa Aplicadas y Calidad}\\{\small Camino de Vera s/n, 46022 Valencia, Spain.}\\ {\small suicabga\@@eio.upv.es}  \\
$^{(2)}${\small Universitat Rovira i Virgili}\\
{\small Departament d'Enginyeria Inform\`atica i Matem\`atiques } \\  {\small Av. Pa\"{\i}sos
Catalans 26, 43007 Tarragona, Spain.} \\{\small
  abel.cabrera\@@urv.cat}\\
$^{(3)}$ {\small Universidad Aut\'onoma de Guerrero}\\ {\small Facultad de  Matem\'{a}ticas, Carlos E. Adame 5, La Garita, 39350 Acapulco, Mexico}\\
{\small fmira8906\@@gmail.com}\\
$^{(4)}$ {\small Universidad de C\'adiz,}
{\small Departamento de Matem\'aticas}\\{\small Av. Ram\'on Puyol s/n, 11202 Algeciras, Spain.}\\ {\small
ismael.gonzalez\@@uca.es
}}

\date{}

\begin{document}

\maketitle

\begin{abstract}
Given a graph $G=(V,E)$, a function $f:V\rightarrow \{0,1,2\}$ is a total Roman $\{2\}$-dominating function if

\vspace{.08cm}

\begin{itemize}
\item every vertex $v\in V$ for which $f(v)=0$ satisfies that $\sum_{u\in N(v)}f(u)\geq 2$, where $N(v)$ represents the open neighborhood of $v$, and
\item every vertex $x\in V$ for which $f(x)\geq 1$ is adjacent to at least one vertex $y\in V$ such that $f(y)\geq 1$.
\end{itemize}

\vspace{.3cm}

The weight of the function $f$ is defined as $\omega(f)=\sum_{v\in V}f(v)$. The total Roman $\{2\}$-domination number, denoted by $\gamma_{t\{R2\}}(G)$, is the minimum weight among all total Roman $\{2\}$-dominating functions on $G$. In this article we introduce the concepts above and begin the study of its combinatorial and computational properties. For instance, we give several closed relationships between this parameter and other domination related parameters in graphs. In addition, we prove that the complexity of computing the value $\gamma_{t\{R2\}}(G)$ is NP-hard, even when restricted to bipartite or chordal graphs.
\end{abstract}

\noindent
{\it Mathematics Subject Classification (2010):} 05C69, 05C75.

\vspace{.1cm}

\noindent
{\it Key words:} Total Roman $\{2\}$-domination, Roman $\{2\}$-domination, total Roman domination, total domination.

\section{Introduction}

Throughout this article we only consider simple graphs $G$ with vertex set $V(G)$ and edge set $E(G)$. That is, graphs that
are finite, undirected, and without loops or multiple edges. Given a vertex $v$ of $G$, $N_G(v)$ denotes the \emph{open neighborhood} of $v$ in $G$: $N_G(v)=\{u\in V(G): uv\in E(G)\}$. The \emph{closed neighborhood}, denoted by $N_G[v]$, equals $N_G(v) \cup \{v\}$. Whenever possible, we shall skip the subindex $G$ in the notations above.

A function $f: V(G) \rightarrow \{0,1,2, \ldots\}$ on $G$ is said to be a \emph{dominating function} if for every vertex $v$ such that $f(v)=0$, there exists a vertex $u\in N(v)$, such that $f(u)>0$; furthermore, $f$ is said to be a \emph{total dominating function} (TDF) if for every vertex $v$, there exists a vertex $u\in N(v)$, such that $f(u)>0$. The \emph{weight} of a function $f$ on a set $S\subseteq V(G)$ is $f(S)=\sum_{v\in S}f(v)$. If particularly $S=V(G)$, then $f(V(G))$ will be represented as $\omega(f)$.

Recently, (total) dominating functions in domination theory have received much attention. A purely theoretic motivation is given by the fact that the (total) dominating function problem can be seen, in some sense, as a proper generalization of the classical (total) domination problem. That is, a set $S\subseteq V(G)$ is a (\emph{total}) \emph{dominating set} if there exists a (total) dominating function $f$ such that $f(x)>0$ if and only if $x\in S$. The (\emph{total}) \emph{domination number} of
$G$, denoted by ($\gamma_t(G)$) $\gamma(G)$, is the minimum cardinality among all (total) dominating sets of $G$, or equivalently, the minimum weight among all (total) dominating functions on $G$. Domination in graphs is a classical topic, and nowadays, one of the most active areas of research in graph theory. For more information on domination and total domination see the books \cite{book1, book2, book-total-dom} and the survey \cite{Henning2009}.

From now on, we restrict ourselves to the case of functions $f: V(G) \rightarrow \{0,1,2\}$. Let $V_i=\{v\in V(G): f(v)=i\}$ for every $i\in \{0,1,2\}$. We will identify $f$ with the three subsets of $V(G)$ induced by $f$ and write $f(V_0, V_1, V_2)$. Notice that the weight of $f$ satisfies $\omega(f)=\sum_{i=0}^2 i|V_i|=2|V_2|+|V_1|$.  We shall also write $V_{0,2}=\{ v\in V_0 \colon N(v)\cap V_2 \neq \emptyset\}$ and $V_{0,1}= V_0\setminus V_{0,2}$.

We now define some types of (total) dominating functions, which are obtained by imposing certain restrictions, and introduce a new
one, in order to begin with the exposition of our results.

A \emph{Roman $\{2\}$-dominating function} (R2DF) is a dominating function $f(V_0,V_1,V_2)$ satisfying the condition that for every vertex $v\in V_0$, $f(N(v))\geq 2$. The \emph{Roman $\{2\}$-domination number} of $G$, denoted by $\gamma_{\{R2\}}(G)$, is the minimum weight among all R2DFs on $G$. A R2DF of weight $\gamma_{\{R2\}}(G)$ is called a $\gamma_{\{R2\}}(G)$-function. This concept was introduced by Chellali et al. in \cite{chellali2016}. It was also further studied in \cite{henning2016}, where it was called \emph{Italian domination number}.

A \emph{total Roman dominating function} (TRDF) on a graph $G$ is a TDF $f(V_0,V_1,V_2)$ on $G$ satisfying that for every vertex $v\in V_0$ there exists a vertex $u\in N(v)\cap V_2$. The \emph{total Roman domination number}, denoted by $\gamma_{tR}(G)$, is the minimum weight among all TRDFs on $G$. A TRDF of weight $\gamma_{tR}(G)$ is called a $\gamma_{tR}(G)$-function. This concept was introduced by Liu and Chang \cite{Liu2013}. For recent results on the total Roman domination in graphs we cite \cite{AbdollahzadehAhangarHenningSamodivkinEtAl2016, TRDF2019, Dorota2018}.

A set $S\subseteq V(G)$ is a \emph{double dominating set} of $G$ if for every vertex $v\in V(G)$, $|N[v]\cap S|\geq 2$. The \emph{double domination number} of $G$, denoted by $\gamma_{\times 2}(G)$, is the minimum cardinality among all double dominating sets of $G$. This graph parameter was introduced in \cite{haynes2000} by Harary and Haynes, and it was also studied, for example, in \cite{chellali2006, chellali2005, harant2005}.

In this article we introduce the study of total Roman $\{2\}$-domination in graphs. We define a \emph{total Roman $\{2\}$-dominating function} (TR2DF) to be a R2DF on $G$ which is a TDF as well. The \emph{total Roman $\{2\}$-domination number},
denoted by $\gamma_{t\{R2\}}(G)$, is the minimum weight among all TR2DFs on $G$.

In particular, we can define a double dominating function (DDF) to be a TR2DF $f(V_0, V_1, V_2)$ in which $V_2=\emptyset$. Obviously $f(V_0, V_1, \emptyset)$ is a DDF if and only if $V_1$ is a double dominating set of $G$.

To illustrate the definitions above, we consider the graph shown in Figure \ref{fig1}.

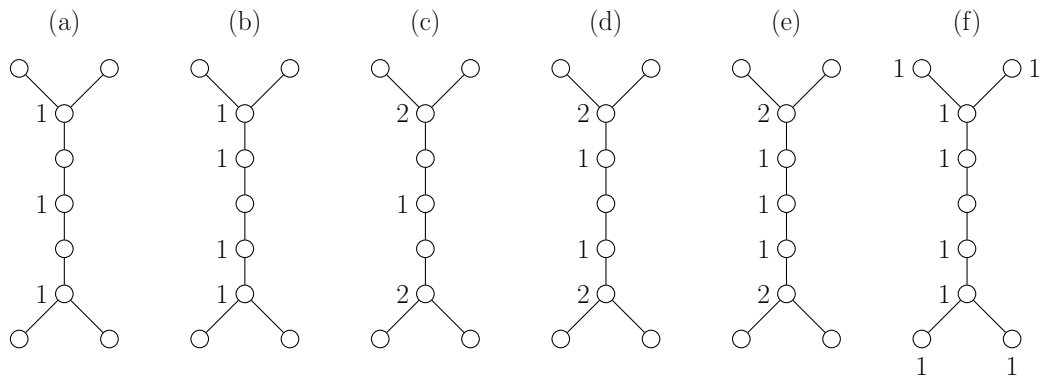
\begin{figure}[ht]
\centering
\begin{tikzpicture}[scale=.6, transform shape]

%%%%%%%%%%%%%%%%%%%%%%%\gamma(G)=3%%%%%%%%%%%%%%%%%%%%%%%%%%%%%%

\node [draw, shape=circle] (b2) at  (0,1) {};
\node at (-0.5,1) {\Large $1$};

\node [draw, shape=circle] (b3) at  (0,2) {};

\node [draw, shape=circle] (b4) at  (0,3) {};
\node at (-0.5,3) {\Large $1$};

\node [draw, shape=circle] (b5) at  (0,4) {};

\node [draw, shape=circle] (b6) at  (0,5) {};
\node at (-0.5,5) {\Large $1$};

\node [draw, shape=circle] (b61) at  (-1,6) {};
\node [draw, shape=circle] (b62) at  (1,6) {};

\node at (0,7) {\Large (a)};

\node [draw, shape=circle] (b21) at  (-1,0) {};
\node [draw, shape=circle] (b22) at  (1,0) {};

\draw(b2)--(b3)--(b4)--(b5)--(b6);
\draw(b21)--(b2)--(b22);
\draw(b61)--(b6)--(b62);

%%%%%%%%%%%%%%%%%%%%%%%%%%%%%%%%%%%%%%%%%%%%%%%%%%%%%%%%%%
%%%%%%%%%%%%%%%%%%%%%%%\gamma_t(G)=4%%%%%%%%%%%%%%%%%%%%%%%%%%%%%%

\node [draw, shape=circle] (b2) at  (4,1) {};
\node at (3.5,1) {\Large $1$};

\node [draw, shape=circle] (b3) at  (4,2) {};
\node at (3.5,2) {\Large $1$};

\node [draw, shape=circle] (b4) at  (4,3) {};

\node [draw, shape=circle] (b5) at  (4,4) {};
\node at (3.5,4) {\Large $1$};

\node [draw, shape=circle] (b6) at  (4,5) {};
\node at (3.5,5) {\Large $1$};

\node [draw, shape=circle] (b61) at  (3,6) {};
\node [draw, shape=circle] (b62) at  (5,6) {};

\node at (4,7) {\Large (b)};

\node [draw, shape=circle] (b21) at  (3,0) {};
\node [draw, shape=circle] (b22) at  (5,0) {};

\draw(b2)--(b3)--(b4)--(b5)--(b6);
\draw(b21)--(b2)--(b22);
\draw(b61)--(b6)--(b62);

%%%%%%%%%%%%%%%%%%%%%%%%%%%%%%%%%%%%%%%%%%%%%%%%%%%%%%%%%%
%%%%%%%%%%%%%%%%%%%%%%%\gamma_{R2}(G)=5%%%%%%%%%%%%%%%%%%%%%%%%%%%%%
\node [draw, shape=circle] (b2) at  (8,1) {};
\node at (7.5,1) {\Large $2$};

\node [draw, shape=circle] (b3) at  (8,2) {};

\node [draw, shape=circle] (b4) at  (8,3) {};
\node at (7.5,3) {\Large $1$};

\node [draw, shape=circle] (b5) at  (8,4) {};

\node [draw, shape=circle] (b6) at  (8,5) {};
\node at (7.5,5) {\Large $2$};

\node [draw, shape=circle] (b61) at  (7,6) {};
\node [draw, shape=circle] (b62) at  (9,6) {};

\node at (8,7) {\Large (c)};

\node [draw, shape=circle] (b21) at  (7,0) {};
\node [draw, shape=circle] (b22) at  (9,0) {};

\draw(b2)--(b3)--(b4)--(b5)--(b6);
\draw(b21)--(b2)--(b22);
\draw(b61)--(b6)--(b62);

%%%%%%%%%%%%%%%%%%%%%%%%%%%%%%%%%%%%%%%%%%%%%%%%%%%%%%%%%%

%%%%%%%%%%%%%%%%%%%%%%%\gamma_{t{R2}}(G)=6%%%%%%%%%%%%%%%%%%%
%%%%%%%%%%%%%%%%%%%%%%%%%%%%%%%%%%%%%%%%%%%%%%%%%%%%%%%%%%
\node [draw, shape=circle] (b2) at  (12,1) {};
\node at (11.5,1) {\Large $2$};

\node [draw, shape=circle] (b3) at  (12,2) {};
\node at (11.5,2) {\Large $1$};

\node [draw, shape=circle] (b4) at  (12,3) {};

\node [draw, shape=circle] (b5) at  (12,4) {};
\node at (11.5,4) {\Large $1$};

\node [draw, shape=circle] (b6) at  (12,5) {};
\node at (11.5,5) {\Large $2$};

\node [draw, shape=circle] (b61) at  (11,6) {};
\node [draw, shape=circle] (b62) at  (13,6) {};

\node at (12,7) {\Large (d)};

\node [draw, shape=circle] (b21) at  (11,0) {};
\node [draw, shape=circle] (b22) at  (13,0) {};

\draw(b2)--(b3)--(b4)--(b5)--(b6);
\draw(b21)--(b2)--(b22);
\draw(b61)--(b6)--(b62);

%%%%%%%%%%%%%%%%%%%%%%%%%%%%%%%%%%%%%%%%%%%%%%%%%%%%%%%%%%
%%%%%%%%%%%%%%%%%%%%%%%\gamma_{tR}(G)=7%%%%%%%%%%%%%%%%%%%

\node [draw, shape=circle] (b2) at  (16,1) {};
\node at (15.5,1) {\Large $2$};

\node [draw, shape=circle] (b3) at  (16,2) {};
\node at (15.5,2) {\Large $1$};

\node [draw, shape=circle] (b4) at  (16,3) {};
\node at (15.5,3) {\Large $1$};

\node [draw, shape=circle] (b5) at  (16,4) {};
\node at (15.5,4) {\Large $1$};

\node [draw, shape=circle] (b6) at  (16,5) {};
\node at (15.5,5) {\Large $2$};

\node [draw, shape=circle] (b61) at  (15,6) {};
\node [draw, shape=circle] (b62) at  (17,6) {};

\node at (16,7) {\Large (e)};

\node [draw, shape=circle] (b21) at  (15,0) {};
\node [draw, shape=circle] (b22) at  (17,0) {};

\draw(b2)--(b3)--(b4)--(b5)--(b6);
\draw(b21)--(b2)--(b22);
\draw(b61)--(b6)--(b62);
%%%%%%%%%%%%%%%%%%%%%%%%%%%%%%%%%%%%%%%%%%%%%%%%%%%%%%%%%%

%%%%%%%%%%%%%%%%%%%%%%%%%%%%%%%%%%%%%%%%%%%%%%%%%%%%%%%%%%
%%%%%%%%%%%%%%%%%%%%%%%\gamma_{x2}(G)=8%%%%%%%%%%%%%%%%%%%

\node [draw, shape=circle] (b2) at  (20,1) {};
\node at (19.5,1) {\Large $1$};

\node [draw, shape=circle] (b3) at  (20,2) {};
\node at (19.5,2) {\Large $1$};

\node [draw, shape=circle] (b4) at  (20,3) {};

\node [draw, shape=circle] (b5) at  (20,4) {};
\node at (19.5,4) {\Large $1$};

\node [draw, shape=circle] (b6) at  (20,5) {};
\node at (19.5,5) {\Large $1$};

\node [draw, shape=circle] (b61) at  (19,6) {};
\node [draw, shape=circle] (b62) at  (21,6) {};

\node at (18.5,6) {\Large $1$};
\node at (21.5,6) {\Large $1$};

\node at (20,7) {\Large (f)};

\node [draw, shape=circle] (b21) at  (19,0) {};
\node [draw, shape=circle] (b22) at  (21,0) {};
\node at (19,-0.6) {\Large $1$};
\node at (21,-0.6) {\Large $1$};

\draw(b2)--(b3)--(b4)--(b5)--(b6);
\draw(b21)--(b2)--(b22);
\draw(b61)--(b6)--(b62);

\end{tikzpicture}
\caption{Graph $G$ with different labelings (vertices with no drawn label have label zero) to show the values of several parameters: $\gamma(G)=3$ (a), $\gamma_t(G)=4$ (b), $\gamma_{\{R2\}}(G)=5$ (c), $\gamma_{t\{R2\}}(G)=6$ (d), $\gamma_{tR}(G)=7$ (e) and $\gamma_{\times 2}(G)=8$ (f).}\label{fig1}
\end{figure}

The article is organized as follows. Section $2$ introduces general combinatorial results which show the close relationship that exists between the total Roman $\{2\}$-domination number and other domination parameters. Also, we obtain general bounds and discuss the extreme cases. Finally, in Section $3$ we show that the problem of deciding if a graph has a TR2DF of a given weight is NP-complete, even when restricted to bipartite graphs or chordal graphs.

\subsection{Terminology and Notation}

Given a graph $G$, we denote by $\delta_G(v)=|N_G(v)|$ the \emph{degree} of a vertex $v$ of $G$. Also, $\delta(G) = \min_{v\in V(G)}\{\delta_G(v)\}$ and $\Delta(G) = \max_{v\in V(G)}\{\delta_G(v)\}$. We say that a vertex $v\in V(G) $ is \emph{universal} if $N_G[v]=V(G)$. For a set $S \subseteq V(G)$, its open neighborhood is the set $N_G(S)= \cup_{v\in S} N_G(v)$, and its closed neighborhood is the set $N_G[S]= N_G(S)\cup S$.

The \emph{private neighborhood} $pn_G(v,S)$ of $v \in S\subseteq V(G)$ is defined by $pn_G(v,S)=\{u \in V(G): N_G(u)\cap S = \{v\}\}$. Each vertex in $pn_G(v,S)$ is called a \emph{private neighbor} of $v$ with respect to $S$. The \emph{external private neighborhood} $epn_G(v,S)$ consists of those private neighbors of $v$ in $V(G)\setminus S$. Hence, $epn_G(v,S)= pn_G(v,S)\cap (V(G)\setminus S)$.

For any two vertices $u$ and $v$, the \emph{distance} $d_G(u,v)$ between $u$ and $v$ is the minimum length of a $u-v$ path.  The \emph{diameter} of $G$, denoted by $diam(G)$, is the maximum distance among pairs of vertices of $G$. A \emph{diametral path} in $G$ is a shortest path whose length equals the diameter of the graph. Thus, a diametral path in $G$ is a shortest path joining two vertices that are at distance $diam(G)$ from each other (such vertices are called diametral vertices). From now on, we shall skip the subindex $G$ in all the notations above, whenever the graph $G$ is clear from the context.

Given a set of vertices $S \subseteq V(G)$, by $G-S$ we denote the graph obtained from $G$ by removing all the vertices of $S$ and all the edges incident with a vertex in $S$ (if $S=\{v\}$, for some vertex $v$, then we simply write $G-v$).

A \emph{leaf} vertex of $G$ is a vertex of degree one. A \emph{support} vertex of $G$ is a vertex adjacent to a leaf vertex, a \emph{strong support} vertex is a support vertex adjacent to at least two leaves, a \emph{strong leaf} vertex is a leaf vertex adjacent to a strong support vertex, and a \emph{semi-support} vertex is a vertex adjacent to a support vertex that is not a leaf. The set of leaves is denoted by $L(G)$; the set of support vertices is denoted by $S(G)$; the set of strong support vertices is denoted by $S_s(G)$; the set of strong leaves is denoted by $L_s(G)$; and the set of semi-support vertices is denoted by $SS(G)$.

A \emph{tree} $T$ is an  acyclic connected graph. A \emph{rooted tree} $T$ is a tree with a distinguished special vertex $r$, called the root. For each vertex $v\neq r$ of $T$, the \emph{parent} of $v$ is the neighbor of $v$ on the unique $r-v$ path, while a \emph{child} of $v$ is any other neighbor of $v$. A \emph{descendant} of $v$ is a vertex $u\neq v$ such that the unique $r-u$ path contains $v$. Thus, every child of $v$ is a descendant of $v$. The set of descendants of $v$ is denoted by $D(v)$, and we define $D[v]=D(v)\cup\{v\}$. The \emph{maximal subtree} at $v$ is the subtree of $T$ induced by $D[v]$, and is denoted by $T_v$.

We will use the notation $K_n$, $N_n$, $K_{1,n-1}$, $P_n$ and $C_n$ for complete graphs, empty graphs, star graphs, path graphs and cycle graphs of order $n$, respectively. Given two graphs $G$ and $H$, the \emph{corona product} $G\odot H$ is defined as the graph obtained from $G$ and $H$ by taking one copy of $G$ and $|V(G)|$ copies of $H$, and joining by an edge each vertex of the $i^{th}$-copy of $H$ with the $i^{th}$-vertex of $G$. For the remainder of the article, definitions will be introduced whenever a concept is needed.

\section{Combinatorial results}

We begin this section with two inequality chains relating the domination number, the total domination number, the total Roman domination number, the Roman $\{2\}$-domination number, the double domination number and the total Roman $\{2\}$-domination number. We must remark that the last inequality in the first item is a well known result (see \cite{AbdollahzadehAhangarHenningSamodivkinEtAl2016}). We include it in the result to have a complete vision of the relationship between our parameter and the total domination number.

\begin{proposition}\label{prop-inequalities}
The following inequalities hold for any graph $G$ without isolated vertices.
\begin{enumerate}[{\rm(i)}]
  \item $\gamma_t(G)\leq \gamma_{t\{R2\}}(G)\leq \gamma_{tR}(G)\leq  2\gamma_t(G)$, $(\gamma_{tR}(G)\leq 2\gamma_t(G)$ is from {\em \cite{AbdollahzadehAhangarHenningSamodivkinEtAl2016})}.
  \item$\gamma_{\{R2\}}(G)\leq \gamma_{t\{R2\}}(G)\leq \gamma_{\times 2}(G)$.
\end{enumerate}
\end{proposition}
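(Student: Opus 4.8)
The plan is to prove each inequality separately, mostly by exhibiting appropriate functions and appealing to the definitions.

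\medskip

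\textbf{Item (i).} The leftmost inequality $\gamma_t(G)\le\gamma_{t\{R2\}}(G)$ is immediate: every TR2DF is in particular a TDF (this is how we defined it), and by the discussion in the introduction the minimum weight of a TDF equals $\gamma_t(G)$; hence the minimum weight of a TR2DF is at least $\gamma_t(G)$. For the middle inequality $\gamma_{t\{R2\}}(G)\le\gamma_{tR}(G)$, I would take a $\gamma_{tR}(G)$-function $f(V_0,V_1,V_2)$ and argue it is already a TR2DF: it is a TDF by definition, and the total Roman condition ``every $v\in V_0$ has a neighbor in $V_2$'' implies $f(N(v))\ge 2$, which is exactly the R2DF condition; so $f$ is a TR2DF of weight $\gamma_{tR}(G)$, giving the bound. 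The rightmost inequality $\gamma_{tR}(G)\le 2\gamma_t(G)$ is cited from~\cite{AbdollahzadehAhangarHenningSamodivkinEtAl2016}, so nothing needs to be done there.

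\medskip

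\textbf{Item (ii).} For $\gamma_{\{R2\}}(G)\le\gamma_{t\{R2\}}(G)$, I would take a $\gamma_{t\{R2\}}(G)$-function $f$; it is a R2DF by definition, so its weight is at least $\gamma_{\{R2\}}(G)$, giving the inequality. For $\gamma_{t\{R2\}}(G)\le\gamma_{\times 2}(G)$, let $S$ be a $\gamma_{\times 2}(G)$-set (a minimum double dominating set) and define $f$ by $f(v)=1$ for $v\in S$ and $f(v)=0$ otherwise, i.e.\ $f(V\setminus S, S,\emptyset)$; its weight is $|S|=\gamma_{\times 2}(G)$. I must check $f$ is a TR2DF. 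If $f(v)=0$, then $v\notin S$, so $|N[v]\cap S|=|N(v)\cap S|\ge 2$, hence $f(N(v))\ge 2$; this is the R2DF condition. If $f(v)\ge 1$, then $v\in S$, and since $|N[v]\cap S|\ge 2$ there is a second vertex of $S$ in $N[v]$, which must lie in $N(v)$ (it is distinct from $v$); that neighbor has value $1\ge 1$, so the totality condition holds. Thus $f$ is a TR2DF and the bound follows. This matches the remark in the introduction that $f(V_0,V_1,\emptyset)$ is a DDF iff $V_1$ is a double dominating set.

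\medskip

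None of the steps presents a genuine obstacle; the only point requiring minor care is making sure that in the double-domination argument the ``extra'' vertex guaranteed by $|N[v]\cap S|\ge 2$ is genuinely a neighbor of $v$ rather than $v$ itself, which is why we use $|N[v]\cap S|\ge 2$ (two vertices in the closed neighborhood, one of which may be $v$, forces at least one in the open neighborhood). I would also note in passing that all six parameters are well defined for graphs without isolated vertices, which is the standing hypothesis, since such graphs always admit a TDF and hence a TR2DF (e.g.\ assign $1$ to every vertex).
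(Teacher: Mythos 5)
Your proposal is correct and follows essentially the same route as the paper: both proofs reduce (i) to the observations that every TR2DF is a TDF and every TRDF is a TR2DF, cite \cite{AbdollahzadehAhangarHenningSamodivkinEtAl2016} for $\gamma_{tR}(G)\leq 2\gamma_t(G)$, and reduce (ii) to the facts that every TR2DF is a R2DF and that the indicator function of a double dominating set (a DDF) is a TR2DF. You merely spell out the verifications that the paper treats as immediate, including the careful point about $|N[v]\cap S|\geq 2$ forcing a neighbor in the open neighborhood, which is a harmless elaboration rather than a different argument.
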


\begin{proof}
It was shown in  \cite{AbdollahzadehAhangarHenningSamodivkinEtAl2016} that  $\gamma_{tR}(G)\leq 2\gamma_t(G)$. To conclude the proof of (i), we only need to observe that any TR2DF is a TDF, which implies that $\gamma_t(G) \leq \gamma_{t\{R2\}}(G)$, and any TRDF is a TR2DF, which implies that $\gamma_{t\{R2\}}(G)\leq \gamma_{tR}(G)$.

Now, to prove (ii), we only need to observe that any DDF is a TR2DF, which implies that $\gamma_{t\{R2\}}(G)\leq \gamma_{\times 2}(G)$ and any TR2DF is a R2DF, which implies that $\gamma_{\{R2\}}(G)\leq \gamma_{t\{R2\}}(G)$.
\end{proof}

The following result provides equivalent conditions for the graphs where the left hand side inequality of Proposition \ref{prop-inequalities} (i) is achieved. Note that it has a simple proof, but we however prefer include it to have a more complete exposition.

\begin{remark}\label{equiv-tR2-x2}
For any graph $G$, the following statements are equivalent.
   \begin{enumerate}[{\rm(a)}]
     \item $\gamma_{t\{R2\}}(G)=\gamma_t(G)$.
     \item $\gamma_{\times 2}(G)=\gamma_t(G)$.
   \end{enumerate}
\end{remark}

\begin{proof}
Suppose that (a) holds and let $f(V_0, V_1,V_2)$ be a $\gamma_{t\{R2\}}(G)$-function. Since $f$ is a TDF, $\gamma_t(G)\leq |V_1\cup V_2|=|V_1|+|V_2|\leq |V_1|+2|V_2|=\gamma_{t\{R2\}}(G)=\gamma_t(G)$. So $V_2=\emptyset$, which implies that $f$ is a DDF of weight $\omega(f)=\gamma_t(G)$. Hence, (b) holds. Finally, it is straightforward to observe that (b) implies (a).
\end{proof}

We continue by showing a simple relationship between the total Roman $\{2\}$-domination number, the domination number and the total domination number. Since $\gamma(G)\le \gamma_t(G)$ for any graph $G$, we notice that the following result improves the last upper bound of Proposition \ref{prop-inequalities}~(i).

\begin{theorem}\label{teo-t-g}
For any graph $G$ without isolated vertices,
$\gamma_{t\{R2\}}(G)\leq \gamma_t(G)+\gamma(G).$
\end{theorem}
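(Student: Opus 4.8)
The plan is to start from a $\gamma_t(G)$-function (equivalently, a $\gamma_t(G)$-set) $D$ and a $\gamma(G)$-set $S$, and combine them into a TR2DF whose weight is at most $|D| + |S| = \gamma_t(G) + \gamma(G)$. The natural candidate is the function $f$ that assigns $2$ to each vertex of $D \cap S$, $1$ to each vertex of $D \triangle S$, and $0$ elsewhere. Then $\omega(f) = 2|D\cap S| + |D\setminus S| + |S\setminus D| = |D| + |S|$, which is exactly the target bound, so the whole game is to verify that $f$ is a valid TR2DF.

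First I would check the Roman $\{2\}$-condition: take any $v$ with $f(v) = 0$, i.e. $v \notin D\cup S$. Since $S$ is a dominating set, $v$ has a neighbor in $S$; since $D$ is a (total, hence ordinary) dominating set, $v$ has a neighbor in $D$. If $v$ has two distinct neighbors, one in $D$ and one in $S$, or a single neighbor lying in $D\cap S$, then $f(N(v)) \ge 2$ and we are done. The delicate sub-case is when the only neighbors of $v$ in $D\cup S$ form a configuration giving weight exactly $1$ — but a neighbor in $D\cap S$ already contributes $2$, and if $v$'s $D$-neighbor and $S$-neighbor are distinct they contribute $1+1$; the only way to get total weight $1$ is for $v$ to have exactly one neighbor in $D\cup S$ lying in $D\setminus S$ or $S\setminus D$, which is impossible because $v$ must be dominated by both $D$ and $S$. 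So the R2-condition holds cleanly.

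Next I would check the totality condition: every vertex $x$ with $f(x)\ge 1$, i.e. $x\in D\cup S$, needs a neighbor in $D\cup S$. If $x\in D$, this is immediate since $D$ is a \emph{total} dominating set, so $x$ has a neighbor in $D \subseteq D\cup S$. The genuinely problematic case is $x \in S\setminus D$: such an $x$ need not have a neighbor in $D\cup S$ at all (think of an isolated vertex of $G[S]$ that happens to be dominated "from outside" only by vertices not in $D\cup S$). I expect this to be the main obstacle, and the fix is to repair $f$ locally: for each such bad vertex $x\in S\setminus D$, since $G$ has no isolated vertices $x$ has some neighbor $y$; if $y\notin D\cup S$ we would want to raise $f(y)$ to $1$, but that costs weight. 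The cleaner route is to observe that $x\in S\setminus D$ but $D$ dominates $x$, so $x$ has a neighbor $w\in D$; then $w\in D\cup S$ already gives $x$ its required positive neighbor — wait, this means there is in fact no bad case, because $D$ is a dominating set and hence every vertex of $G$, in particular every $x\in S\setminus D$, has a neighbor in $D$.

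So in fact both conditions follow from the two facts that $D$ is a \emph{total} dominating set (giving the positive neighbor for vertices in $D$, and a dominating neighbor for every vertex, thus also for vertices of $S\setminus D$) and that $S$ is a dominating set (providing the "second unit" of weight for the zero-vertices). The write-up would therefore be: fix a $\gamma_t(G)$-set $D$ and a $\gamma(G)$-set $S$, define $f$ as above, verify $\omega(f)=\gamma_t(G)+\gamma(G)$, then dispatch the R2-condition and the totality condition in the two short paragraphs sketched above, concluding $\gamma_{t\{R2\}}(G)\le \omega(f)=\gamma_t(G)+\gamma(G)$. The only subtlety to handle carefully in prose is making explicit that a zero-vertex's $D$-neighbor and $S$-neighbor, even if they coincide, lie in $D\cap S$ and hence carry weight $2$, so weight never falls short of $2$.
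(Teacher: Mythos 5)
Your proposal is correct and follows essentially the same route as the paper: the paper's proof also takes a $\gamma_t(G)$-set $D$ and a $\gamma(G)$-set $S$ and defines the function with $V_2=D\cap S$ and $V_1=(D\cup S)\setminus V_2$, obtaining weight $|D|+|S|$. Your write-up merely makes explicit the verification of the Roman $\{2\}$- and totality conditions that the paper leaves as an observation, and that verification (including the coincidence case landing in $D\cap S$) is accurate.
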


\begin{proof}
Let $D$ be a $\gamma_t(G)$-set and let $S$ be a $\gamma(G)$-set. We define the function $f(V_0,V_1,V_2)$ on $G$, where $V_2=D\cap S$ and $V_1=(D\cup S)\setminus V_2$. Notice that $f$ is a TR2DF on $G$ of weight $\omega(f)=2|V_2|+|V_1|=|D|+|S|= \gamma_t(G)+\gamma(G)$. Therefore, the result follows.
\end{proof}

The following result is an immediate consequence of the remark above and the well-know inequality $\gamma_t(G)\leq 2\gamma(G)$ (see \cite{Henning2009}).

\begin{corollary}\label{cor-t-g}
For any graph $G$ without isolated vertices,
$\gamma_{t\{R2\}}(G)\leq 3\gamma(G).$
\end{corollary}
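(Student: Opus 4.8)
The plan is to derive the bound by chaining together two inequalities that are already at our disposal. The starting point is Theorem~\ref{teo-t-g}, which asserts $\gamma_{t\{R2\}}(G) \le \gamma_t(G) + \gamma(G)$ for every graph $G$ without isolated vertices. The second ingredient is the classical fact that $\gamma_t(G) \le 2\gamma(G)$ for every such graph (see \cite{Henning2009}); this is where the ``no isolated vertices'' hypothesis is genuinely used, since it is exactly the condition guaranteeing that $\gamma_t(G)$ (and hence $\gamma_{t\{R2\}}(G)$) is well defined.

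From here the argument is immediate: substituting $\gamma_t(G) \le 2\gamma(G)$ into the bound of Theorem~\ref{teo-t-g} yields $\gamma_{t\{R2\}}(G) \le \gamma_t(G) + \gamma(G) \le 2\gamma(G) + \gamma(G) = 3\gamma(G)$, which is precisely the claim. No new construction or case analysis is needed; in particular one need not even exhibit a witnessing TR2DF, because the proof of Theorem~\ref{teo-t-g} already produced one (of weight $\gamma_t(G)+\gamma(G)$, built from a $\gamma_t(G)$-set and a $\gamma(G)$-set).

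I do not anticipate any real obstacle. The only point requiring a moment's care is bookkeeping of the hypotheses: both Theorem~\ref{teo-t-g} and the inequality $\gamma_t(G)\le 2\gamma(G)$ are stated for graphs without isolated vertices, so the corollary must carry the same assumption, which it does. If a fully self-contained exposition were desired, one could instead re-run the construction in the proof of Theorem~\ref{teo-t-g} directly, choosing $D$ to be a minimum total dominating set (so $|D|\le 2\gamma(G)$) and $S$ a $\gamma(G)$-set, and setting $V_2 = D\cap S$, $V_1 = (D\cup S)\setminus V_2$; this reproduces the same weight estimate but adds nothing of substance.
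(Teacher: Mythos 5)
Your proof is correct and is exactly the paper's argument: the corollary is stated there as an immediate consequence of Theorem~\ref{teo-t-g} combined with the classical inequality $\gamma_t(G)\leq 2\gamma(G)$ from \cite{Henning2009}. Nothing further is needed.
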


We remark that the upper bound of Theorem \ref{teo-t-g} is sharp. For example, for an integer $s\geq 1$, let $H_s$ be the graph obtained from $P_3$ and $N_1$ by taking one copy of $N_1$ and $s$ copies of $P_3$, and joining by an edge the support vertex of each copy of $P_3$ with the vertex of $N_1$. It is easy to check that $\gamma(H_s)=s$, $\gamma_t(H_s)=s+1$ and $\gamma_{t\{R2\}}(H_s)=2s+1=\gamma_t(H_s)+\gamma(H_s)$. The graph $H_3$, for example, is illustrated in Figure \ref{figure-H-s}.

\begin{figure}[h]
\centering
\begin{tikzpicture}[scale=.6, transform shape]

\node [draw, shape=circle] (u) at  (0,1.5) {};
%\node at (0.5,1.5) {\Large $u$};

\node [draw, shape=circle] (s1) at  (-3,0) {};
\node [draw, shape=circle] (s2) at  (0,0) {};
\node [draw, shape=circle] (s3) at  (3,0) {};

\node [draw, shape=circle] (h11) at  (-3.5,-1.5) {};
\node [draw, shape=circle] (h12) at  (-2.5,-1.5) {};

\node [draw, shape=circle] (h21) at  (-0.5,-1.5) {};
\node [draw, shape=circle] (h22) at  (0.5,-1.5) {};

\node [draw, shape=circle] (h31) at  (2.5,-1.5) {};
\node [draw, shape=circle] (h32) at  (3.5,-1.5) {};

\draw(u)--(s1);
\draw(u)--(s2);
\draw(u)--(s3);

\draw(s1)--(h11);
\draw(s1)--(h12);

\draw(s2)--(h21);
\draw(s2)--(h22);

\draw(s3)--(h31);
\draw(s3)--(h32);

\end{tikzpicture}
\caption{The graph $H_3$.}
\label{figure-H-s}
\end{figure}
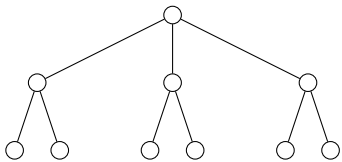

From Proposition \ref{prop-inequalities} and Theorem \ref{teo-t-g}, we immediately obtain that $\gamma_t(G)=\gamma(G)$ is a necessary condition for a graph $G$ to satisfy $\gamma_{t\{R2\}}(G)=2\gamma_t(G)$. However, this condition is not sufficient, for example, the cycle graph $C_4$ satisfies that $\gamma_t(C_4)=\gamma(C_4)=2$ and $\gamma_{t\{R2\}}(C_4)=3<4=2\gamma_t(C_4)$.

The following result provides an equivalent condition for the graphs $G$ which satisfy the equality $\gamma_{t\{R2\}}(G)=2\gamma_t(G)$. Before we shall need the following known result.

\begin{theorem}\label{teo-tR-2g}{\rm\cite{AbdollahzadehAhangarHenningSamodivkinEtAl2016}}
If $G$ is a graph with no isolated vertex, then $2\gamma(G)\leq \gamma_{tR}(G)$.
\end{theorem}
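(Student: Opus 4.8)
The plan is to take a $\gamma_{tR}(G)$-function $f(V_0,V_1,V_2)$ and manufacture from it \emph{two} dominating sets of $G$ whose cardinalities add up to exactly $\omega(f)=|V_1|+2|V_2|$. Since each of them has size at least $\gamma(G)$, this immediately gives $2\gamma(G)\le |V_1|+2|V_2|=\omega(f)=\gamma_{tR}(G)$.

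First I would record that, because $f$ is in particular a total dominating function, every vertex of $V_1\cup V_2$ has a neighbour in $V_1\cup V_2$; that is, the induced subgraph $H=G[V_1\cup V_2]$ has no isolated vertex (note $V_1\cup V_2\neq\emptyset$ since $G$ has no isolated vertex, so $H$ is a legitimate graph with at least two vertices). I would then invoke the classical fact that the vertex set of a graph with no isolated vertex splits into two dominating sets: choose $D_1\subseteq V(H)$ to be a dominating set of $H$ that is minimal with respect to inclusion, and put $D_2=V(H)\setminus D_1$. Using the private-neighbour characterization of minimal dominating sets, each $v\in D_1$ either has no neighbour in $D_1$ (hence, as $H$ has no isolated vertex, a neighbour in $D_2$) or has an external private neighbour in $D_2$; in both cases $v$ has a neighbour in $D_2$, and trivially every vertex of $D_2$ has a neighbour in $D_1$, so $D_2$ is also a dominating set of $H$. (One also checks $D_2\neq\emptyset$: $V(H)$ itself is never a minimal dominating set of a graph on at least two vertices with no isolated vertex.)

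Next I would set $D_1'=D_1\cup V_2$ and $D_2'=D_2\cup V_2$ and verify that each $D_i'$ is a dominating set of $G$. Every vertex of $V_0$ has a neighbour in $V_2\subseteq D_i'$ by the Roman condition for $f$; every vertex of $V_1\cup V_2$ is dominated (in $H$, hence in $G$) by $D_i$. Finally a one-line count finishes it: since $D_1$ and $D_2$ partition $V_1\cup V_2$, we have $|D_1|+|D_2|=|V_1|+|V_2|$ and $|D_1\cap V_2|+|D_2\cap V_2|=|V_2|$, whence
\[
|D_1'|+|D_2'|=\bigl(|D_1|+|D_2|\bigr)+2|V_2|-\bigl(|D_1\cap V_2|+|D_2\cap V_2|\bigr)=|V_1|+2|V_2|=\omega(f)=\gamma_{tR}(G),
\]
so $2\gamma(G)\le|D_1'|+|D_2'|=\gamma_{tR}(G)$. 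The main (and really only) obstacle is making the "two disjoint dominating sets'' step fully rigorous — in particular ensuring that the minimal dominating set $D_1$ can be taken with nonempty complement; the rest is bookkeeping with the Roman and totality conditions. If one prefers to avoid citing Ore-type results, the short argument via minimal dominating sets sketched above can simply be included inline.
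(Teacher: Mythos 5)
Your argument is correct, but there is nothing in the paper to compare it with: Theorem \ref{teo-tR-2g} is stated as a known result imported from \cite{AbdollahzadehAhangarHenningSamodivkinEtAl2016}, and the paper gives no proof of it (it is only used later, e.g.\ in Theorem \ref{teo-tR2-2t}). Taken on its own, your proof is sound and self-contained. The key points all check out: since a $\gamma_{tR}(G)$-function $f(V_0,V_1,V_2)$ is in particular a total dominating function, the induced subgraph $H=G[V_1\cup V_2]$ has no isolated vertex; the inline re-derivation of Ore's classical theorem (the complement of an inclusion-minimal dominating set of an isolate-free graph is again a dominating set) via the private-neighbour characterization is fine, and your worry about $D_2\neq\emptyset$ is actually automatic, since $D_1\neq\emptyset$ and you have already shown every vertex of $D_1$ has a neighbour in $D_2$; the sets $D_1\cup V_2$ and $D_2\cup V_2$ dominate $G$ because vertices of $V_0$ see $V_2$ by the total Roman condition and vertices of $V_1\cup V_2$ are dominated inside $H$; and the inclusion--exclusion count gives $|D_1'|+|D_2'|=|V_1|+2|V_2|=\gamma_{tR}(G)$, hence $2\gamma(G)\le\gamma_{tR}(G)$. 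One cosmetic remark: the sentence ``every vertex of $D_2$ has a neighbour in $D_1$'' is not what is needed for $D_2$ to dominate $H$ (vertices of $D_2$ are dominated simply by belonging to $D_2$); the substantive point is the one you prove, namely that every vertex of $D_1$ has a neighbour in $D_2$. With that trimmed, your write-up is a complete proof of the cited inequality.
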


\begin{theorem}\label{teo-tR2-2t}
Let $G$ be a graph. Then $\gamma_{t\{R2\}}(G)=2\gamma_t(G)$ if and only if $\gamma_{t\{R2\}}(G)=\gamma_{tR}(G)$ and $\gamma_t(G)=\gamma(G)$.
\end{theorem}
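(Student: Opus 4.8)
The statement is an ``if and only if,'' so the plan is to prove each direction separately, using the known chains from Proposition~\ref{prop-inequalities}~(i), Theorem~\ref{teo-t-g}, Theorem~\ref{teo-tR-2g}, and Theorem~\ref{teo-tR2-2t}'s predecessor (the inequality $\gamma_{tR}(G)\le 2\gamma_t(G)$).

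First I would handle the easier direction $(\Leftarrow)$. Assume $\gamma_{t\{R2\}}(G)=\gamma_{tR}(G)$ and $\gamma_t(G)=\gamma(G)$. By Theorem~\ref{teo-t-g}, $\gamma_{t\{R2\}}(G)\le \gamma_t(G)+\gamma(G)=2\gamma_t(G)$. For the reverse inequality, I combine Theorem~\ref{teo-tR-2g}, which gives $2\gamma(G)\le \gamma_{tR}(G)$, with the hypotheses: $2\gamma_t(G)=2\gamma(G)\le \gamma_{tR}(G)=\gamma_{t\{R2\}}(G)$. Hence $\gamma_{t\{R2\}}(G)=2\gamma_t(G)$.

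For the forward direction $(\Rightarrow)$, assume $\gamma_{t\{R2\}}(G)=2\gamma_t(G)$. Since $\gamma_{t\{R2\}}(G)\le\gamma_{tR}(G)\le 2\gamma_t(G)$ by Proposition~\ref{prop-inequalities}~(i), the assumption forces equality throughout, so $\gamma_{t\{R2\}}(G)=\gamma_{tR}(G)$ immediately. It remains to show $\gamma_t(G)=\gamma(G)$; since $\gamma(G)\le\gamma_t(G)$ always holds, I need $\gamma_t(G)\le\gamma(G)$. Here I would invoke Theorem~\ref{teo-t-g} in the other direction: $2\gamma_t(G)=\gamma_{t\{R2\}}(G)\le \gamma_t(G)+\gamma(G)$, which rearranges directly to $\gamma_t(G)\le\gamma(G)$. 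This finishes both directions.

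The proof is genuinely short once the right earlier results are assembled; the only place where one must be careful is making sure that Theorem~\ref{teo-tR-2g} ($2\gamma(G)\le\gamma_{tR}(G)$) is available, which is why the excerpt states it just beforehand. I do not anticipate a real obstacle: the argument is a bookkeeping exercise with the inequality chains, and the ``main step'' is simply recognizing that the hypothesis $\gamma_{t\{R2\}}(G)=2\gamma_t(G)$ saturates the chain $\gamma_{t\{R2\}}(G)\le\gamma_{tR}(G)\le 2\gamma_t(G)$ and then feeding $2\gamma_t(G)\le\gamma_t(G)+\gamma(G)$ from Theorem~\ref{teo-t-g} to collapse $\gamma_t$ onto $\gamma$. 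If desired, one could also verify the forward direction via Remark~\ref{equiv-tR2-x2}-style reasoning on a minimizing function, but the inequality-chain route is cleaner.
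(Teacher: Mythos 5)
Your proposal is correct and follows essentially the same route as the paper: both directions are handled by saturating the same inequality chains, using Proposition~\ref{prop-inequalities}(i), Theorem~\ref{teo-t-g}, Theorem~\ref{teo-tR-2g}, and $\gamma(G)\leq\gamma_t(G)$. The only cosmetic difference is that in the backward direction you invoke Theorem~\ref{teo-t-g} for the upper bound where the paper cites $\gamma_{t\{R2\}}(G)\leq 2\gamma_t(G)$ from Proposition~\ref{prop-inequalities}, which is immaterial.
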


\begin{proof}
Assume that $\gamma_{t\{R2\}}(G)=2\gamma_t(G)$. Hence, Proposition \ref{prop-inequalities} leads to $\gamma_{t\{R2\}}(G)=\gamma_{tR}(G)$. Also, by Theorem \ref{teo-t-g} and the known inequality $\gamma(G)\leq \gamma_t(G)$, we obtain that $2\gamma_t(G)=\gamma_{t\{R2\}}(G)\leq \gamma_t(G)+\gamma(G)\leq 2\gamma_t(G)$. Therefore, we must have equality throughout the inequality chain above. In particular, $\gamma_t(G)=\gamma(G)$.

On the other hand, we assume that $\gamma_{t\{R2\}}(G)=\gamma_{tR}(G)$ and $\gamma_t(G)=\gamma(G)$. By the equalities above, Theorem \ref{teo-tR-2g} and Proposition \ref{prop-inequalities}, we obtain that $2\gamma_t(G)=2\gamma(G)\leq \gamma_{tR}(G)=\gamma_{t\{R2\}}(G)\leq 2\gamma_t(G)$. Therefore $\gamma_{t\{R2\}}(G)= 2\gamma_t(G)$.
\end{proof}

Notice that the inequality $\gamma_{t\{R2\}}(G)\leq 3\gamma(G)$  can be also deduced from the following result.

\begin{theorem}\label{theom-tr2-r2}
For any graph $G$ without isolated vertices, $\gamma_{t\{R2\}}(G)\leq \gamma_{\{R2\}}(G)+\gamma(G).$
\end{theorem}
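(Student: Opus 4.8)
The plan is to construct a TR2DF on $G$ whose weight is at most $\gamma_{\{R2\}}(G)+\gamma(G)$ by superimposing a minimum dominating set onto a minimum Roman $\{2\}$-dominating function. First I would fix a $\gamma_{\{R2\}}(G)$-function $g(W_0,W_1,W_2)$ and a $\gamma(G)$-set $S$, and define a new function $f$ by keeping the values of $g$ but raising the value on certain vertices of $S$ so as to repair the totality condition, while never spending more than $|S|$ extra units of weight. Concretely, the natural attempt is: for each vertex $v$ with $g(v)\ge 1$ that has no neighbor of positive $g$-value, pick a neighbor and bump it; the subtlety is doing this charged against $S$ rather than against arbitrary vertices, so the total overhead is $\gamma(G)$.

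The cleaner route is to observe that the only obstruction to $g$ being a TR2DF is the existence of \emph{isolated} vertices inside the subgraph $G[W_1\cup W_2]$ (the "positive part" of $g$). A vertex $v\in W_1\cup W_2$ that violates totality has all neighbors in $W_0$. I would handle this by defining $f$ to agree with $g$ everywhere, except: add $1$ to $f(v)$ for every such offending vertex $v$, \emph{and} add $1$ to $f(u)$ for one neighbor $u$ of $v$; but to keep the weight bounded by $\gamma_{\{R2\}}(G)+\gamma(G)$, I instead route the repair through $S$. A slicker version: set $V_2' \supseteq W_2$ and build $f$ as the "max" of $g$ and the indicator-type function $\mathbf{1}_S$ suitably interpreted — i.e. $f(x)=g(x)$ if $x\notin S$ and $f(x)=\min\{2, g(x)+1\}$ if $x\in S$. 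Then $\omega(f)\le \omega(g)+|S|=\gamma_{\{R2\}}(G)+\gamma(G)$. It remains to check $f$ is a TR2DF: the R2DF condition is inherited from $g$ since $f\ge g$ pointwise; for totality, every vertex with $f(x)\ge 1$ must have a positive neighbor. Vertices in $S$ with $f$-value $\ge 1$ need a positive neighbor, and vertices outside $S$ with $f(x)=g(x)\ge 1$ likewise.

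The main obstacle is exactly the totality check, and this is where one must use that $S$ is a dominating set and $g$ a dominating function cleverly — a single dominating set need not make the positive part connected into a TDF. So I expect the construction above to need refinement: perhaps one should additionally observe that if a positive vertex $x$ still lacks a positive neighbor, then $x$ has a neighbor in $S$ (since $S$ dominates $V(G)$, and if $x\in S$ we can instead bump a neighbor of $x$ that lies in $W_1\cup W_2$ — but such a neighbor may not exist). The honest fix is: whenever a vertex $x$ with $f(x)\ge 1$ has no positive neighbor, we add $1$ to the weight of some neighbor $y$ of $x$; to bound the total cost, note that each such $y$ can be chosen inside $N[x]\cap$ (something controlled by $S$), and a counting/charging argument shows the number of added units is at most $\gamma(G)$ because the offending vertices, together with their chosen repairs, can be organized into at most $|S|$ groups. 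I would therefore carry out the proof in this order: (1) fix $g$ and $S$; (2) define $f$ as the pointwise maximum of $g$ and an appropriate weight-$\le |S|$ "patch" supported on or near $S$; (3) verify $f\ge g$ gives the R2DF condition for free; (4) verify totality by a short case analysis on whether a positive vertex lies in $S$ or is dominated by $S$, invoking that $S$ dominates and $g$ dominates; (5) conclude $\gamma_{t\{R2\}}(G)\le\omega(f)\le\gamma_{\{R2\}}(G)+\gamma(G)$. Step (4), ensuring the patch genuinely repairs totality without exceeding weight $|S|$, is the crux and is where I would spend the most care.
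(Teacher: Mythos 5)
Your overall strategy --- patch a $\gamma_{\{R2\}}(G)$-function $g(W_0,W_1,W_2)$ with a $\gamma(G)$-set $S$, keep $f\ge g$ pointwise so the Roman $\{2\}$ condition is inherited, and check totality --- is the same as the paper's, and your totality argument for positive vertices outside $S$ is fine (they have a neighbor in $S$, and all of $S$ becomes positive). But there is a genuine gap at exactly the point you flag. The one construction you actually commit to, $f(x)=g(x)$ for $x\notin S$ and $f(x)=\min\{2,g(x)+1\}$ for $x\in S$, is not a TDF in general: a vertex $x\in S$ with $g(x)\ge 1$ all of whose neighbors lie outside $S$ and carry $g$-value $0$ still has no positive neighbor. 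The star $K_{1,n-1}$, with $g$ assigning $2$ to the center and $0$ to the leaves and $S$ consisting of the center alone, is a concrete counterexample. Your fallback ("add $1$ to some neighbor $y$; a counting/charging argument shows the added units number at most $\gamma(G)$") is asserted rather than carried out, and layered on top of the min-construction it would exceed the budget $|S|$; moreover, your suggestion to bump a neighbor of $x$ lying in $W_1\cup W_2$ is beside the point, since if such a neighbor exists then $x$ already has a positive neighbor and needs no repair.

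The missing idea --- and the way the paper's proof closes this --- is to spend the unit of budget attached to each $x\in S$ in a type-dependent way. If $x\in S$ and $g(x)=0$, raise $x$ itself to $1$; totality at $x$ is then free because the Roman $\{2\}$ condition of $g$ at $x$ already provides a neighbor of positive $g$-value. If $x\in S$ and $g(x)\ge 1$, do not touch $x$ but instead raise to $1$ one vertex $u\in (W_0\cap N(x))\setminus S$, if such a $u$ exists; if no such $u$ exists, no repair is needed, because then every neighbor of $x$ is either positive under $g$ or lies in $W_0\cap S$ and has just been raised to $1$. This patch costs at most one unit per vertex of $S$, keeps $f\ge g$ pointwise, and together with the domination by $S$ it settles totality for every positive vertex. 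Your outline gestures toward this ("a patch supported on or near $S$"), but the redirection of the unit from a positive $x\in S$ to a zero-valued neighbor outside $S$ --- the precise content of the crux you left open --- is never specified, so the proposal as it stands does not yield the bound.
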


\begin{proof}
Let $f(V_0,V_1,V_2)$ be a $\gamma_{\{R2\}}(G)$-function and let $S$ be a $\gamma(G)$-set. Now, we consider the function $f'(V_0',V_1',V_2')$ defined as follows.
\begin{enumerate}[{\rm (a)}]
  \item For every $x\in (V_1\cup V_2)\cap S$, choose a vertex $u\in (V_0\cap N(x))\setminus S$ if it exists, and label it as $f'(u)=1$.
  \item  For every vertex $x\in V_0\cap S$, $f'(x)=1$.
  \item  For any other vertex $u$ not previously labelled, $f'(u)=f(u)$.
\end{enumerate}
Since $f$ is a R2DF, by definition, $f'$ is a R2DF as well.
Observe that $f'$ is also a TDF on $G$. Thus, $f'$ is a TR2DF on $G$, and therefore, $\gamma_{t\{R2\}}(G)\leq \omega(f')\leq \gamma_{\{R2\}}(G)+\gamma(G)$.
\end{proof}

The bound above is tight. For instance, it is achieved for the star graph $K_{1,n-1}$, where $n\geq 3$.

\begin{corollary}
For any graph $G$ without isolated vertices, $\gamma_{t\{R2\}}(G)\leq 2\gamma_{\{R2\}}(G)$. Furthermore, if  $\gamma_{\{R2\}}(G)>\gamma(G)$,
then $\gamma_{t\{R2\}}(G)\leq 2\gamma_{\{R2\}}(G)-1$.
\end{corollary}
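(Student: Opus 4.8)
The plan is to derive both statements directly from Theorem \ref{theom-tr2-r2}, whose conclusion is $\gamma_{t\{R2\}}(G)\leq \gamma_{\{R2\}}(G)+\gamma(G)$, together with the elementary inequality $\gamma(G)\leq \gamma_{\{R2\}}(G)$. So the first step is to record why this last inequality holds: if $f(V_0,V_1,V_2)$ is any R2DF on $G$, then every vertex in $V_0$ has a neighbour of positive weight, hence $V_1\cup V_2$ is a dominating set of $G$, giving $\gamma(G)\leq |V_1\cup V_2|=|V_1|+|V_2|\leq |V_1|+2|V_2|=\omega(f)$; taking $f$ to be a $\gamma_{\{R2\}}(G)$-function yields $\gamma(G)\leq\gamma_{\{R2\}}(G)$.

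For the first inequality, I would simply combine the two facts: $\gamma_{t\{R2\}}(G)\leq\gamma_{\{R2\}}(G)+\gamma(G)\leq\gamma_{\{R2\}}(G)+\gamma_{\{R2\}}(G)=2\gamma_{\{R2\}}(G)$.

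For the refinement, I would use that all the parameters involved are integers, so the hypothesis $\gamma_{\{R2\}}(G)>\gamma(G)$ is equivalent to $\gamma(G)\leq\gamma_{\{R2\}}(G)-1$. Plugging this into Theorem \ref{theom-tr2-r2} gives $\gamma_{t\{R2\}}(G)\leq\gamma_{\{R2\}}(G)+\gamma(G)\leq\gamma_{\{R2\}}(G)+\big(\gamma_{\{R2\}}(G)-1\big)=2\gamma_{\{R2\}}(G)-1$, as claimed.

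There is essentially no serious obstacle here; the only point requiring any argument (rather than pure arithmetic) is the bound $\gamma(G)\leq\gamma_{\{R2\}}(G)$, and even that is a one-line observation about the support of an R2DF being a dominating set. If one prefers, the corollary could even be stated without a separate proof as an immediate consequence of Theorem \ref{theom-tr2-r2}, but including the short derivation above makes the exposition self-contained.
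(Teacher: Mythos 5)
Your proposal is correct and follows exactly the route the paper intends: the corollary is stated there without proof as an immediate consequence of Theorem \ref{theom-tr2-r2}, using precisely the inequality $\gamma(G)\leq\gamma_{\{R2\}}(G)$ and, for the refined bound, integrality of the parameters. Your short justification that the support $V_1\cup V_2$ of an R2DF is a dominating set is the standard (and correct) way to make this self-contained.
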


In connection with the sharpness of the latter bound of the corollary above, we observe that every graph $G$ having exactly one universal vertex satisfies that $\gamma_{t\{R2\}}(G)= 2\gamma_{\{R2\}}(G)-1$.

The next result establishes the existence of a $\gamma_{t\{R2\}}(G)$-function which satisfies a useful property.

\begin{proposition}\label{prop_private}
For any graph $G$ without isolated vertices, there exists a $\gamma_{t\{R2\}}(G)$-function $f(V_0,V_1,V_2)$ such that either $V_2=\emptyset$ or every vertex of $V_2$ has at least two private neighbors in $V_0$ with respect to the set $V_1 \cup V_2$.
\end{proposition}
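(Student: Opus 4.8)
The plan is to start from an arbitrary $\gamma_{t\{R2\}}(G)$-function and, among all minimum-weight TR2DFs, pick one that is extremal with respect to a suitable secondary objective; then argue that extremal choice forces the stated private-neighbor property. First I would fix a $\gamma_{t\{R2\}}(G)$-function $f(V_0,V_1,V_2)$ chosen so that $|V_2|$ is as small as possible (and, if one wants a cleaner induction, among those with $|V_1|$ as large as possible). If $V_2=\emptyset$ we are done, so assume $V_2\neq\emptyset$ and let $v\in V_2$. The goal is to show $|epn_G(v,V_1\cup V_2)\cap V_0|\geq 2$, i.e.\ $v$ has at least two private neighbors lying in $V_0$.

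The key step is a \emph{re-labeling argument}: suppose for contradiction that $v$ has at most one private neighbor in $V_0$. I would then try to redistribute the two units of weight sitting on $v$ so as to decrease $|V_2|$ without increasing $\omega(f)$ and without destroying either the R2DF condition or the totality condition, contradicting the choice of $f$. Concretely, if $v$ has no private neighbor in $V_0$, then every vertex of $V_0\cap N(v)$ has another neighbor in $V_1\cup V_2$ contributing to it; one then checks whether lowering $f(v)$ from $2$ to $1$ still leaves each $u\in V_0\cap N(v)$ with $f(N(u))\geq 2$ — this fails only for those $u$ whose "second unit" of weight came solely from $v$, i.e.\ $u\in V_{0,2}$ with $N(u)\cap(V_1\cup V_2)=\{v\}$ and $f(v)=2$ the only large neighbor — precisely the private-neighbor case we excluded. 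If $v$ has exactly one private neighbor $u_0\in V_0$, I would move the weight: set $f(v)=1$ and $f(u_0)=1$ (keeping weight constant, shrinking $V_2$ by one), then verify (1) the R2DF condition at every zero vertex — only neighbors of $v$ and of $u_0$ are affected, and each such vertex is handled because $u_0$ now carries weight $1$ for its former private-neighbor obligations, while any other zero neighbor $w$ of $v$ with $f(N(w))$ having dropped by $1$ was, by the no-second-private-neighbor assumption, already receiving a contribution from some vertex of $V_1\cup V_2$ other than $v$; and (2) the totality condition — $v$ still has a positive neighbor (namely $u_0$, or any original positive neighbor of $v$, which exists since $f$ was total), and $u_0$ is adjacent to $v$ which is positive, and no previously-satisfied positive vertex loses its positive neighbor since we only raised a value and lowered $v$'s value while $v$ stays positive. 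This yields a minimum-weight TR2DF with strictly smaller $|V_2|$, the desired contradiction.

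The main obstacle I expect is the bookkeeping in case (1): one must carefully characterize which zero vertices could be damaged by lowering $f(v)$, and show that the hypothesis "$v$ has at most one private neighbor in $V_0$'' is exactly strong enough to guarantee each of them retains total weight $\geq 2$ from its neighborhood after the modification — in particular, one must handle the vertex $u_0$ itself (which may or may not need the extra weight it just received) and any zero vertex adjacent to \emph{both} $v$ and $u_0$. A clean way to organize this is to define, for each $u\in V_0\cap N(v)$, the quantity $f(N(u))-f(v)$ and split on whether it is $\geq 2$ or $\geq 1$ but $<2$; the vertices in the latter class are precisely candidate private neighbors of $v$ (up to the subtlety of whether their unique "witness'' is $v$ as a $2$ or as contributing via being in $V_1\cup V_2$), and by assumption there is at most one such vertex, which is then $u_0$ and is repaired by setting $f(u_0)=1$. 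Once this case analysis is laid out, the contradiction with minimality of $|V_2|$ (within minimum weight) closes the argument; repeating for every $v\in V_2$ gives the stated function.
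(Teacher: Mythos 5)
Your proposal is correct and follows essentially the same route as the paper's proof: fix a $\gamma_{t\{R2\}}(G)$-function with $|V_2|$ minimum and, for $v\in V_2$, rule out $|epn(v,V_1\cup V_2)|=0$ by lowering $f(v)$ to $1$ (contradicting minimality of the weight) and $|epn(v,V_1\cup V_2)|=1$ by setting $f(v)=f(u_0)=1$ (contradicting minimality of $|V_2|$). Your case analysis of which zero vertices could be damaged matches the paper's (shorter) verification, so no gap remains.
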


\begin{proof}
Let $f(V_0,V_1,V_2)$ be a $\gamma_{t\{R2\}}(G)$-function satisfying that $|V_2|$ is minimum. Clearly, if $|V_2|=0$, then we are done. Hence, let $v\in V_2$. If $epn(v, V_1\cup V_2)=\emptyset$, then the function $f'$, defined by $f'(v)=1$ and $f'(x)=f(x)$ whenever $x\in V(G)\setminus \{v\}$, is a TR2DF on $G$, which is a contradiction, and so, $epn(v, V_1\cup V_2)\neq\emptyset$. If $epn(v, V_1\cup V_2)=\{u\}$, then the function $f''$, defined by $f''(v)=f''(u)=1$ and $f''(x)=f(x)$ whenever $x\in V(G)\setminus \{v,u\}$, is a TR2DF on $G$, which is a contradiction as well. Thus, $|epn(v, V_1\cup V_2)|\geq 2$, which completes the proof.
\end{proof}

\begin{corollary}\label{cor-Pn-Cn}
For every graph $G$ without isolated vertices and maximum degree $\Delta(G) \leq 2$, $$\gamma_{t\{R2\}}(G)=\gamma_{\times2}(G).$$
\end{corollary}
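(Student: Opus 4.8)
The plan is to prove both inequalities $\gamma_{\times2}(G)\le\gamma_{t\{R2\}}(G)$ and $\gamma_{t\{R2\}}(G)\le\gamma_{\times2}(G)$; the latter is already known from Proposition~\ref{prop-inequalities}~(ii), so the real content is the reverse inequality $\gamma_{\times2}(G)\le\gamma_{t\{R2\}}(G)$ under the hypothesis $\Delta(G)\le 2$. For this I would start from a $\gamma_{t\{R2\}}(G)$-function $f(V_0,V_1,V_2)$ of the special type guaranteed by Proposition~\ref{prop_private}: either $V_2=\emptyset$, or every vertex of $V_2$ has at least two external private neighbors in $V_0$ with respect to $V_1\cup V_2$.

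In the first case, $V_2=\emptyset$ means $f$ is a DDF, so $V_1$ is a double dominating set and $\gamma_{\times2}(G)\le|V_1|=\omega(f)=\gamma_{t\{R2\}}(G)$, and we are done. In the second case, I would argue that the hypothesis $\Delta(G)\le 2$ forces $V_2=\emptyset$ after all, so this case is vacuous. Indeed, pick $v\in V_2$; it has at least two external private neighbors $u_1,u_2\in V_0$, each satisfying $N(u_i)\cap(V_1\cup V_2)=\{v\}$. Since $u_1\in V_0$, the R2DF condition gives $f(N(u_1))\ge2$; but $N(u_1)$ meets $V_1\cup V_2$ only in $v$, and $v\in V_2$, so in fact $f(N(u_1))=2$ is forced with all the weight concentrated at $v$. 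So far this is consistent. The degree bound enters through the total condition applied to $v$: as $f(v)=2\ge1$, $v$ must have a neighbor $w$ with $f(w)\ge1$, i.e. $w\in V_1\cup V_2$. Now $v$ has degree at most $2$, and $u_1,u_2\in N(v)$ are in $V_0$; so $v$ has exactly two neighbors, namely $u_1$ and $u_2$, neither of which lies in $V_1\cup V_2$ — contradicting the existence of $w$. Hence $V_2=\emptyset$, and the argument of the first case applies.

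The main obstacle, and the point to be careful about, is the treatment of small components or degenerate configurations — for instance components that are a single edge $K_2$ (where $\delta=1$ but the total condition still forces both endpoints positive), or short cycles. I expect these to be handled uniformly by the argument above, since the key step only uses that $v\in V_2$ has at most two neighbors together with the private-neighbor property; nonetheless I would double-check that Proposition~\ref{prop_private} genuinely applies (it requires $G$ without isolated vertices, which is in the hypothesis) and that the external private neighbors are indeed neighbors of $v$, so that counting them against $\delta_G(v)\le\Delta(G)\le2$ is legitimate. Combining the two inequalities yields $\gamma_{t\{R2\}}(G)=\gamma_{\times2}(G)$, completing the proof.
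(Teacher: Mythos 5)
Your proof is correct and follows exactly the route the paper intends: the corollary is stated as an immediate consequence of Proposition~\ref{prop_private} (combined with Proposition~\ref{prop-inequalities}~(ii)), and your degree argument showing that a vertex of $V_2$ with two private neighbors in $V_0$ would violate the total condition when $\Delta(G)\le 2$ is precisely the intended fill-in. The remark about $f(N(u_1))=2$ is an unnecessary aside, but nothing in the argument is wrong.
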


From Corollary \ref{cor-Pn-Cn}, and the following values of $\gamma_{\times 2}(P_n)$ and $\gamma_{\times 2}(C_n)$ obtained in \cite{chellali2006} and \cite{haynes2000} respectively, we obtain our next result.

$$\gamma_{\times 2}(P_n)=\left\{\begin{array}{ll}
                                 2\left\lceil \frac{n}{3}\right\rceil + 1, & \mbox{if $n\equiv 0$ $($mod $3)$,} \\[.3cm]
                                 2\left\lceil \frac{n}{3}\right\rceil, & \mbox{otherwise.}
                               \end{array}
\right.\mbox{ and }\;\;\gamma_{\times 2}(C_n)=\left\lceil \frac{2n}{3}\right\rceil.$$

\begin{remark}\label{rem-Cn-Pn}
For any positive integer $n\geq 2$,

\begin{enumerate}[{\rm (i)}]
\item $\gamma_{t\{R2\}}(P_n)=\left\{\begin{array}{ll}
                                 2\left\lceil \frac{n}{3}\right\rceil + 1, & \mbox{if $n\equiv 0$ $($mod $3)$,} \\[.3cm]
                                 2\left\lceil \frac{n}{3}\right\rceil, & \mbox{otherwise.}
                               \end{array}
\right.$

\item $\gamma_{t\{R2\}}(C_n)=\left\lceil \frac{2n}{3}\right\rceil.$
\end{enumerate}
\end{remark}

Our next contribution shows another relationship between our parameter and the total domination number, but we now also use the order of the graph.

\begin{theorem}\label{teo-total-n}
For any graph $G$ of order $n$ and $\delta(G)\geq 2$,  $$\gamma_{t\{R2\}}(G)\leq \left\lfloor\frac{\gamma_t(G)+n}{2}\right\rfloor.$$
\end{theorem}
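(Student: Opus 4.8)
The plan is to start from a $\gamma_t(G)$-set $D$ and build a TR2DF whose weight is controlled by $|D|$ and $n$, exploiting the freedom that a vertex can receive label $2$ (contributing weight $2$) or label $1$ (contributing weight $1$) while still performing the Roman $\{2\}$-domination task for its zero-neighbors. First I would take a total dominating set $D$ with $|D|=\gamma_t(G)$. Every vertex outside $D$ has at least one neighbor in $D$; the danger is a vertex $v\in V\setminus D$ with exactly one neighbor in $D$, since to make $f(N(v))\ge 2$ with that single neighbor carrying the whole load we would be forced to assign it label $2$. The idea is to define $f$ by putting $f(x)=2$ on those vertices of $D$ that are "forced" (i.e., they are the unique $D$-neighbor of some vertex outside $D$), label $1$ on the remaining vertices of $D$, and label $1$ on a suitably chosen subset of $V\setminus D$ so that every remaining zero-vertex sees total weight at least $2$, and so that the function is total.

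The key counting step is the following. Let $A\subseteq V\setminus D$ be the set of vertices with a unique neighbor in $D$, and for each $v\in A$ let $g(v)$ denote that neighbor; set $F=g(A)\subseteq D$ (the forced vertices). Assign $f\equiv 2$ on $F$, $f\equiv 1$ on $D\setminus F$. Now the vertices of $V\setminus D$ not yet dominated to level $2$ are exactly those in $A$ whose forcing neighbor lies in $F$ — but those are already fine since $g(v)\in F$ carries weight $2$ — together with vertices in $(V\setminus D)\setminus A$ having $\ge 2$ neighbors in $D$ all of which ended up labeled $1$ (automatically fine, weight $\ge 2$). So actually every zero-vertex already satisfies the R2DF condition once $F$ is doubled, and what remains is totality: vertices in $D$ with no neighbor of positive label, and making sure we did not create isolated positive vertices. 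Since $D$ is a total dominating set, every vertex of $D$ has a neighbor in $D$, which now carries label $\ge 1$; so totality on $D$ is automatic, and every vertex of $V\setminus D$ with positive label (none yet) — so in fact $f$ restricted to $D$ with $F$ doubled is already a TR2DF of weight $|D|+|F|=\gamma_t(G)+|F|$. The final step is to bound $|F|$: pair up the doubling. The main obstacle is precisely showing $|F|\le \lfloor (n-\gamma_t(G))/2\rfloor$ or, more robustly, showing one can always choose among several candidate TR2DFs one of weight at most $\lfloor(\gamma_t(G)+n)/2\rfloor$ — this presumably uses $\delta(G)\ge 2$ to argue that a private-neighbor/discharging argument lets each weight-$2$ vertex be "charged" to two distinct vertices of $V\setminus D$, or alternatively that if $|F|$ is large then $V\setminus D$ must itself be large.

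Concretely, I expect the clean route is a discharging/averaging argument over two complementary assignments. Define $f_1$ as above (double $F$, ones on $D\setminus F$, nothing else), and define a second TR2DF $f_2$ by instead keeping $D$ all at label $1$ and adding label $1$ to one private neighbor outside $D$ for each $v\in A$ whose single $D$-neighbor we do not wish to double — using $\delta(G)\ge 2$ to guarantee such outside vertices exist and can be chosen to also restore totality where needed. Then $\omega(f_1)+\omega(f_2)$ telescopes: each forced vertex contributes $+1$ to $f_1$ and the corresponding outside private neighbor contributes $+1$ to $f_2$, and these outside vertices are distinct and lie in $V\setminus D$, so $\omega(f_1)+\omega(f_2)\le 2\gamma_t(G)+(n-\gamma_t(G))=\gamma_t(G)+n$. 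Hence $\min\{\omega(f_1),\omega(f_2)\}\le \lfloor(\gamma_t(G)+n)/2\rfloor$, giving the bound. The delicate points to get right are (a) that $f_2$ really is total and really is a valid R2DF for every zero-vertex (here $\delta(G)\ge 2$ is essential, since a degree-one vertex could block the construction), and (b) that the map from $F$ to the extra outside vertices used in $f_2$ can be made injective, which is what makes the telescoping bound $n-\gamma_t(G)$ on the total extra weight valid. I would handle (a) by a short case analysis on the neighborhood structure of vertices in $V\setminus D$, and (b) by choosing, greedily, distinct private neighbors.
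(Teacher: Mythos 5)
Your construction of $f_1$ (assign $2$ to the forced vertices $F$, $1$ to $D\setminus F$) is indeed a valid TR2DF of weight $\gamma_t(G)+|F|$, but the counting step that is supposed to finish the proof does not hold. The extra weight of $f_1$ sits on $F\subseteq D$, so it cannot be absorbed into the budget $|V(G)\setminus D|=n-\gamma_t(G)$: the claimed inequality $\omega(f_1)+\omega(f_2)\leq \gamma_t(G)+n$ already fails for $C_4$. There, with $D$ two adjacent vertices, both vertices outside $D$ have a unique $D$-neighbor, so $F=D$ and $\omega(f_1)=4$, while every TR2DF of $C_4$ has weight at least $\gamma_{t\{R2\}}(C_4)=3$; hence $\omega(f_1)+\omega(f_2)\geq 7>6=\gamma_t(C_4)+4$ no matter how $f_2$ is chosen, so the averaging argument cannot be repaired by a cleverer $f_2$. (The same example kills the fallback claim $|F|\leq\lfloor(n-\gamma_t(G))/2\rfloor$, since $|F|=2>1$.) In addition, the proposed $f_2$ is not shown to be a TR2DF with the weight you charge it: assigning label $1$ to just one outside private neighbor per forced vertex $w\in F$ does not fix all the vertices of $A$ whose unique $D$-neighbor is $w$, unless that single labelled vertex happens to be adjacent to all of them, and fixing each $v\in A$ separately can cost up to $|A|$ extra units, for which you have no bound of the form $(n-\gamma_t(G))/2$.

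The missing idea, which is how the paper proceeds, is to avoid doubling altogether and to place all the extra weight outside $D$, where the budget $n-\gamma_t(G)$ actually lives. Concretely: let $I$ be the set of isolated vertices of the induced subgraph $\langle V(G)\setminus D\rangle$ and let $S$ be a minimum dominating set of $\langle V(G)\setminus(D\cup I)\rangle$; assign label $1$ to every vertex of $D\cup S$ and $0$ elsewhere. Vertices of $I$ have all their (at least two, by $\delta(G)\geq 2$) neighbors in $D$, and every other unlabelled vertex has a neighbor in $D$ and a distinct neighbor in $S$, so this is a TR2DF; totality is inherited from $D$. Its weight is $|D|+|S|$, and Ore's bound $\gamma(H)\leq |V(H)|/2$ for graphs $H$ without isolated vertices gives $|S|\leq (n-\gamma_t(G))/2$, yielding the stated bound. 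So the single lemma you needed was this classical domination bound applied to the subgraph induced outside $D$, rather than a two-function discharging argument.
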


\begin{proof}
Let $D$ be a $\gamma_t(G)$-set, let $I$ be the set of isolated vertices in $\langle V(G)\setminus D\rangle$ and let $S$ be a $\gamma(\langle V(G)\setminus (D\cup I)\rangle)$-set. In addition, let $f(V_0,V_1,\emptyset)$ be a function defined by $V_1=D\cup S$ and $V_0=V(G)\setminus V_1$. Since $D$ is a TDS of $G$, we have that $V_1=D\cup S$ is a TDS as well. Furthermore,  every vertex  $u\in V(G)\setminus (D\cup S)$ is dominated  by at least two vertices of $ V_1$. Hence, $V_1$ is a double dominating set of $G$, which implies that $f$ is a TR2DF on $G$. Thus, $\gamma_{t\{R2\}}(G)\leq |V_1|=|D\cup S|=|D|+|S|$. Now, since $\langle V(G)\setminus (D\cup I) \rangle$ is a graph without isolated vertices, we have that $|S|=\gamma(\langle V(G)\setminus (D\cup I)\rangle)\leq \frac{|V(G)\setminus (D\cup I)|}{2}\leq \frac{|V(G)\setminus D|}{2}=\frac{n-\gamma_t(G)}{2}$. Therefore,
$\gamma_{t\{R2\}}(G)\leq  \lfloor\frac{\gamma_t(G)+n}{2}\rfloor$, which completes the proof.
\end{proof}

To see the tightness of the bound above we consider for instance the Cartesian product graph $P_2\square P_3$. Also, a consequence of such theorem above is next stated. This is also based on the fact that for any graph $G$ with $\delta(G)\geq 3$, $\gamma_t(G)\leq \frac{|V(G)|}{2}$.

\begin{proposition}\label{teo-total-n-consequence}
For any graph $G$ of order $n$ and $\delta(G)\geq 3$,  $$\gamma_{t\{R2\}}(G)\leq \frac{3n}{4}.$$
\end{proposition}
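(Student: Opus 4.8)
The plan is to combine Theorem~\ref{teo-total-n} with the known structural bound on the total domination number for graphs of minimum degree at least three. Concretely, I would invoke the classical result that every graph $G$ with $\delta(G)\geq 3$ satisfies $\gamma_t(G)\leq \frac{|V(G)|}{2}$ (this is the bound alluded to in the sentence preceding the statement; it is due to Archdeacon et al. and can be cited). Since the hypothesis $\delta(G)\geq 3$ in particular implies $\delta(G)\geq 2$, the graph $G$ falls under the scope of Theorem~\ref{teo-total-n}, so we already have $\gamma_{t\{R2\}}(G)\leq \left\lfloor\frac{\gamma_t(G)+n}{2}\right\rfloor$.

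The remaining work is purely arithmetic. Substituting $\gamma_t(G)\leq \frac{n}{2}$ into the bound from Theorem~\ref{teo-total-n} gives
\[
\gamma_{t\{R2\}}(G)\leq \left\lfloor\frac{\gamma_t(G)+n}{2}\right\rfloor \leq \left\lfloor\frac{\frac{n}{2}+n}{2}\right\rfloor = \left\lfloor\frac{3n}{4}\right\rfloor \leq \frac{3n}{4},
\]
which is exactly the claimed inequality. (One can even keep the floor if a slightly stronger statement is desired, but the stated form follows immediately.)

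The only point that requires a word of care is that $G$ must have no isolated vertices for $\gamma_t(G)$ to be well defined, but this is automatic here since $\delta(G)\geq 3>0$; so Theorem~\ref{teo-total-n} applies without extra hypotheses. I do not anticipate any genuine obstacle: the proof is a two-line deduction, with the ``hard part'' being entirely external, namely the cited theorem $\gamma_t(G)\leq n/2$ for $\delta(G)\geq 3$, which is a well-established result in total domination theory and is used here as a black box. Thus the proposition is essentially a corollary of Theorem~\ref{teo-total-n}, recorded separately because of its clean form.
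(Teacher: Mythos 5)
Your proof is correct and is exactly the argument the paper intends: it combines Theorem~\ref{teo-total-n} with the known bound $\gamma_t(G)\leq \frac{n}{2}$ for graphs with $\delta(G)\geq 3$, precisely as indicated in the sentence preceding the proposition. No issues.
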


Given a graph $G$ and an edge $e\in E(G)$, the graph obtained from $G$ by removing the edge $e$ will be denoted by $G-e$. Notice that any $\gamma_{t\{R2\}}(G-e)$-function is a TR2DF on $G$. Therefore, the following basic result follows.

\begin{observation}\label{obs-span-subgraph}
If $H$ is a spanning subgraph $($without isolated vertices$)$ of a graph $G$, then $\gamma_{t\{R2\}}(G)\leq \gamma_{t\{R2\}}(H)$.
\end{observation}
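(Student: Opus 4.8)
The plan is to show directly that any total Roman $\{2\}$-dominating function on $H$ is also a total Roman $\{2\}$-dominating function on $G$, and then take a function of minimum weight. Since $H$ is a spanning subgraph of $G$, we have $V(H)=V(G)$ and $E(H)\subseteq E(G)$, so in particular $N_H(v)\subseteq N_G(v)$ for every vertex $v$. Let $f(V_0,V_1,V_2)$ be a $\gamma_{t\{R2\}}(H)$-function, and regard $f$ as a function $V(G)\rightarrow\{0,1,2\}$.

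First I would check the Roman $\{2\}$-domination condition: for every $v\in V_0$ we have $f(N_G(v))=\sum_{u\in N_G(v)}f(u)\geq \sum_{u\in N_H(v)}f(u)=f(N_H(v))\geq 2$, where the first inequality uses $N_H(v)\subseteq N_G(v)$ together with $f\geq 0$, and the last inequality holds because $f$ is a R2DF on $H$. Next I would check the totality condition: for every vertex $x$ with $f(x)\geq 1$, since $f$ is a TDF on $H$ there exists $y\in N_H(x)$ with $f(y)\geq 1$; as $N_H(x)\subseteq N_G(x)$, the same vertex $y$ lies in $N_G(x)$, so the condition holds in $G$ as well. (Here one also uses that $G$ has no isolated vertices, which is guaranteed since $H$ has none and $E(H)\subseteq E(G)$.) Hence $f$ is a TR2DF on $G$, and therefore $\gamma_{t\{R2\}}(G)\leq \omega(f)=\gamma_{t\{R2\}}(H)$.

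There is essentially no obstacle here: the argument is a one-line monotonicity observation, and the only thing to be careful about is to verify that \emph{both} defining conditions of a TR2DF transfer from $H$ to $G$, each of which follows immediately from the inclusion $N_H(v)\subseteq N_G(v)$ and nonnegativity of $f$. The special case $H=G-e$ mentioned just before the statement is of course subsumed by this.
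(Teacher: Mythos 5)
Your proof is correct and follows exactly the same idea as the paper, which justifies the observation by the one-line remark that any $\gamma_{t\{R2\}}(H)$-function (equivalently, any $\gamma_{t\{R2\}}(G-e)$-function) remains a TR2DF on $G$ since neighborhoods can only grow when edges are added. Your write-up simply spells out the verification of both defining conditions, which the paper leaves implicit.
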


From Remark \ref{rem-Cn-Pn} and Observation \ref{obs-span-subgraph}, we obtain the following result.

\begin{proposition}
Let $G$ be a graph of order $n$.

\begin{enumerate}
\item[-] If $G$ is a Hamiltonian graph, then  $\gamma_{t\{R2\}}(G)\leq 2\left\lceil \frac{n}{3}\right\rceil$.
\item[-] If $G$ has a Hamiltonian path, then $\gamma_{t\{R2\}}(G)\leq 2\left\lceil \frac{n}{3}\right\rceil + 1$.
\end{enumerate}
\end{proposition}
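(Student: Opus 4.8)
The proposition asserts two bounds derived from Remark \ref{rem-Cn-Pn} via Observation \ref{obs-span-subgraph}, so the plan is to invoke spanning subgraph monotonicity directly against a cycle or path spanning subgraph.

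\medskip

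\textbf{Proof plan.} First I would handle the Hamiltonian case. If $G$ on $n$ vertices is Hamiltonian, then $C_n$ is a spanning subgraph of $G$, and $C_n$ has no isolated vertices (here we need $n\geq 3$, which is implicit since a Hamiltonian graph must have at least three vertices). By Observation \ref{obs-span-subgraph}, $\gamma_{t\{R2\}}(G)\leq \gamma_{t\{R2\}}(C_n)$, and by Remark \ref{rem-Cn-Pn}(ii) this equals $\lceil 2n/3\rceil$. Since $\lceil 2n/3\rceil \leq 2\lceil n/3\rceil$ (checking the three residues of $n$ modulo $3$: for $n\equiv 0$ both sides are $2n/3$; for $n\equiv 1$ the left side is $(2n+1)/3$ and the right is $(2n+4)/3$; for $n\equiv 2$ the left is $(2n+2)/3$ and the right is $(2n+2)/3$), we conclude $\gamma_{t\{R2\}}(G)\leq 2\lceil n/3\rceil$.

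\medskip

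\textbf{Second case.} If $G$ has a Hamiltonian path, then $P_n$ is a spanning subgraph of $G$ with no isolated vertices (needing $n\geq 2$, as stated). By Observation \ref{obs-span-subgraph}, $\gamma_{t\{R2\}}(G)\leq \gamma_{t\{R2\}}(P_n)$, and Remark \ref{rem-Cn-Pn}(i) gives $\gamma_{t\{R2\}}(P_n) = 2\lceil n/3\rceil + 1$ when $n\equiv 0\pmod 3$ and $\gamma_{t\{R2\}}(P_n) = 2\lceil n/3\rceil$ otherwise. In either subcase $\gamma_{t\{R2\}}(P_n)\leq 2\lceil n/3\rceil + 1$, so the bound follows.

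\medskip

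\textbf{Main obstacle.} There is essentially no obstacle: the only thing requiring care is the elementary inequality $\lceil 2n/3\rceil \leq 2\lceil n/3\rceil$ used in the first case, and the low-order boundary conditions (a Hamiltonian graph has $n\geq 3$; a graph with a Hamiltonian path has $n\geq 2$) ensuring the spanning cycle or path has no isolated vertex so that Observation \ref{obs-span-subgraph} applies. One might also wish to note that both bounds are attained — by $C_n$ and $P_n$ themselves — but that is not part of the statement. I would write the proof in two short sentences, one per bullet, citing Observation \ref{obs-span-subgraph} and Remark \ref{rem-Cn-Pn}.
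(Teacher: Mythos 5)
Your proposal is correct and follows exactly the route the paper intends: the proposition is stated there as an immediate consequence of Observation \ref{obs-span-subgraph} applied to the spanning cycle $C_n$ (resp.\ spanning path $P_n$) together with the values in Remark \ref{rem-Cn-Pn}, which is precisely your argument. Your extra check that $\left\lceil \frac{2n}{3}\right\rceil \leq 2\left\lceil \frac{n}{3}\right\rceil$ in each residue class is a sensible detail the paper leaves implicit.
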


Clearly, the bounds above are tight, as they are achieved for $C_n$ and $P_n$ with $n\equiv 0$ $(mod$ $3)$, respectively.

We now proceed to characterize all graphs achieving the limit cases of the trivial bounds $2\leq \gamma_{t\{R2\}}(G)\leq n$. For this purpose, we shall need the following theorem.

\begin{theorem}{\rm\cite{haynes2000}}\label{teo-haynes-n}
Let $G$ be a graph without isolated vertices. Then $\gamma_{\times 2}(G)=2$ if and only if $G$ has two universal vertices.
\end{theorem}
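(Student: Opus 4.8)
The plan is to prove both implications directly from the definition of a double dominating set, using the elementary observation that $\gamma_{\times 2}(G)\ge 2$ for every graph without isolated vertices: for any vertex $v$, the requirement $|N[v]\cap S|\ge 2$ already forces any double dominating set $S$ to have at least two elements. Hence it suffices to decide exactly when a two-element set can be a double dominating set, i.e.\ when $\gamma_{\times 2}(G)\le 2$.

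For the backward implication, suppose $u$ and $w$ are two universal vertices of $G$ and set $S=\{u,w\}$. I would check the double domination condition by a short case analysis on a vertex $v\in V(G)$: if $v\in\{u,w\}$, then $v$ itself and the other universal vertex both lie in $N[v]\cap S$; if $v\notin\{u,w\}$, then universality of $u$ and $w$ gives $u,w\in N(v)$, so again $|N[v]\cap S|\ge 2$. Thus $S$ is a double dominating set, whence $\gamma_{\times 2}(G)\le 2$, and combined with the trivial lower bound this gives $\gamma_{\times 2}(G)=2$.

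For the forward implication, assume $\gamma_{\times 2}(G)=2$ and fix a double dominating set $S=\{u,w\}$ with $u\ne w$. The key step is that for every vertex $v$, the condition $|N[v]\cap S|\ge 2$ together with $|S|=2$ forces $S\subseteq N[v]$. Applying this to $v=u$ gives $w\in N[u]$, and since $w\ne u$ this yields $uw\in E(G)$ (symmetrically from $v=w$). Applying it to an arbitrary $v\notin\{u,w\}$ gives $u,w\in N[v]$, hence $uv,wv\in E(G)$. Therefore $u$ is adjacent to $w$ and to every vertex outside $\{u,w\}$, so $N[u]=V(G)$ and $u$ is universal; likewise $w$ is universal, completing the characterization.

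The argument has essentially no obstacle; the only point requiring a moment's care is to apply the double domination condition to the vertices of $S$ themselves, and not merely to vertices outside $S$ --- it is precisely this application that forces $u$ and $w$ to be adjacent in the forward direction.
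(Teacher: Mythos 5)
Your proof is correct. Note, however, that the paper does not prove this statement at all: it is quoted as a known result of Harary and Haynes from the cited reference on double domination, so there is no internal argument to compare against. Your self-contained verification is exactly the natural one and has no gaps: the trivial lower bound $\gamma_{\times 2}(G)\geq 2$ (since $|N[v]\cap S|\geq 2$ already forces $|S|\geq 2$), the observation that a two-element set $S=\{u,w\}$ is double dominating precisely when $S\subseteq N[v]$ for every vertex $v$, and the deduction that this makes both $u$ and $w$ universal. You are also right that the only delicate point is applying the double domination condition at the vertices $u$ and $w$ themselves, which is what forces $uw\in E(G)$ and hence $N[u]=N[w]=V(G)$; the converse direction is a routine check. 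So your argument legitimately supplies a proof that the paper leaves to the external reference.
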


\begin{theorem}\label{teo-tR2=2}
Let $G$ be a graph without isolated vertices. Then $\gamma_{t\{R2\} }(G)=2$ if and only if $G$ has two universal vertices.
\end{theorem}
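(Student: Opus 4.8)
The plan is to prove the two implications directly, using Theorem~\ref{teo-haynes-n} together with Proposition~\ref{prop-inequalities} for one direction and a short weight-counting argument for the other.

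For the ``if'' direction, I would assume that $G$ has two universal vertices. By Theorem~\ref{teo-haynes-n} this is equivalent to $\gamma_{\times 2}(G)=2$, and then Proposition~\ref{prop-inequalities}~(ii) immediately yields $\gamma_{t\{R2\}}(G)\le\gamma_{\times 2}(G)=2$. (Equivalently, one can exhibit the function assigning $1$ to each of the two universal vertices $u,v$ and $0$ to everything else, and check that it is a TR2DF of weight $2$: $u$ and $v$ are adjacent, and every vertex of value $0$ has both $u$ and $v$ in its open neighborhood, so $f(N(w))\ge 2$.) It then remains to observe the trivial lower bound $\gamma_{t\{R2\}}(G)\ge 2$ for graphs without isolated vertices.

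For the ``only if'' direction, let $f(V_0,V_1,V_2)$ be a $\gamma_{t\{R2\}}(G)$-function, so that $2|V_2|+|V_1|=\omega(f)=2$. The only possibilities are $(|V_2|,|V_1|)=(1,0)$ or $(0,2)$. The first case is impossible: the unique vertex $v\in V_2$ satisfies $f(v)=2\ge 1$, so the totality condition requires a neighbor of $v$ with positive label, but $V_1\cup V_2=\{v\}$, a contradiction. Hence $V_2=\emptyset$ and $V_1=\{u,v\}$ with $u\neq v$. The totality condition forces $uv\in E(G)$, and for every $w\in V_0$ the R2DF condition $f(N(w))\ge 2$ forces $\{u,v\}\subseteq N(w)$, since $f$ takes value $1$ precisely on $u$ and $v$. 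Therefore $N[u]=N[v]=V(G)$, i.e.\ $u$ and $v$ are both universal. (Alternatively, this case says $V_1$ is a double dominating set of cardinality $2$, and one concludes via Theorem~\ref{teo-haynes-n}.)

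I do not anticipate a genuine obstacle here; the only step deserving a word of care is the justification of $\gamma_{t\{R2\}}(G)\ge 2$ when $G$ has no isolated vertex, which is exactly the place where the \emph{totality} requirement (as opposed to plain Roman $\{2\}$-domination) is what rules out weight~$1$.
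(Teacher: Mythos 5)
Your proposal is correct and follows essentially the same route as the paper: the paper argues that $\gamma_{t\{R2\}}(G)=2$ forces $\gamma_{\times 2}(G)=2$ and then invokes Theorem~\ref{teo-haynes-n}, calling the converse straightforward, which matches your use of Theorem~\ref{teo-haynes-n} together with Proposition~\ref{prop-inequalities}~(ii). The only difference is that your explicit weight-count (ruling out $(|V_2|,|V_1|)=(1,0)$ via the totality condition and showing $V_1=\{u,v\}$ must consist of two adjacent universal vertices) spells out the step the paper compresses into ``directly implies,'' which is a welcome addition rather than a deviation.
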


\begin{proof}
Notice that $\gamma_{t\{R2\}}(G)=2$ directly implies $\gamma_{\times 2}(G)=2$. Hence, by Theorem \ref{teo-haynes-n}, $G$ has two universal vertices. The other hand it is straightforward to see.
\end{proof}

We next proceed to characterize all graphs $G$ with $\gamma_{t\{R2\}}(G)=3$. For this purpose, we consider the next family of graphs. Let $\mathcal{H}$ be the family of graphs $H$ of order $n\geq 3$ such that the subgraph induced by three vertices of $H$ is $P_3$ or $C_3$ and the remaining $n-3$ vertices have minimum degree two and they induce an empty graph.

\begin{theorem}\label{teo-tR2=3}
Let $G$ be a connected graph of order $n$. Then $\gamma_{t\{R2\}}(G)=3$ if and only if there exists $H\in \mathcal{H}\cup \{K_{1,n-1}\}$ which is a spanning subgraph of $G$ and $G$ has as most one universal vertex.
\end{theorem}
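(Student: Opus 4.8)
The plan is to prove both directions by carefully analysing what a weight-$3$ TR2DF on a connected graph $G$ of order $n$ can look like. A TR2DF $f(V_0,V_1,V_2)$ of weight $3$ has either $|V_1|=3,|V_2|=0$ or $|V_1|=1,|V_2|=1$. In the first case $V_1$ is a set of three vertices forming a double dominating set; since $G$ is connected and every vertex of $V_1$ needs a positive neighbour, the subgraph induced by $V_1$ has no isolated vertex, so it is $P_3$ or $C_3$, and since every vertex outside $V_1$ must have two neighbours in $V_1$, it in particular has degree $\geq 2$ and is non-adjacent to any other vertex of $V_0$ (that last point is not forced—two $V_0$-vertices may be adjacent). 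Hmm: so in the $|V_1|=3$ case I should be more careful and realize the natural spanning subgraph witnessing the condition is obtained from $G$ by deleting all edges within $V_0$; that spanning subgraph lies in $\mathcal H$ and has no universal vertex unless... So the first direction ($\Rightarrow$) is: take a $\gamma_{t\{R2\}}(G)$-function of weight $3$; by Proposition \ref{prop_private} we may assume $V_2=\emptyset$ or each vertex of $V_2$ has $\geq 2$ external private neighbours in $V_0$. In the $|V_2|=1$ case, the single vertex $v\in V_2$ together with the single vertex $w\in V_1$ and the edge $vw$ (needed for totality) dominate everything, $v$ is adjacent to all of $V_0$, so $N[v]=V(G)$, i.e.\ $v$ is universal and the star $K_{1,n-1}$ centered at $v$ is a spanning subgraph; $G$ has no two universal vertices by Theorem \ref{teo-tR2=2}. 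In the $|V_2|=0$ case, deleting the edges inside $V_0$ yields a spanning subgraph in $\mathcal H$, and again $G$ cannot have two universal vertices.

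For the converse ($\Leftarrow$), suppose $H\in\mathcal H\cup\{K_{1,n-1}\}$ is a spanning subgraph of $G$ and $G$ has at most one universal vertex. If $H=K_{1,n-1}$, assign $2$ to the center and $1$ to one leaf; this is a TR2DF of $G$ of weight $3$ (it is a TR2DF of $H$, and by Observation \ref{obs-span-subgraph}, $\gamma_{t\{R2\}}(G)\le\gamma_{t\{R2\}}(H)\le 3$). If $H\in\mathcal H$, assign $1$ to each of the three vertices inducing $P_3$ or $C_3$ and $0$ elsewhere; since the remaining $n-3$ vertices have degree $\ge 2$ in $H$ and their only possible neighbours are the three labelled vertices (they induce an empty graph and form an independent-from-nothing… wait, they could be adjacent to each other in $G$ but not in $H$)—in $H$ they have all their $\ge 2$ neighbours among the three labelled vertices, so $f$ restricted appropriately is a TR2DF of $H$ of weight $3$, hence $\gamma_{t\{R2\}}(G)\le 3$. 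Finally one checks $\gamma_{t\{R2\}}(G)\ne 2$: by Theorem \ref{teo-tR2=2} that would require two universal vertices in $G$, which is excluded by hypothesis. Hence $\gamma_{t\{R2\}}(G)=3$.

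The delicate point—and the main obstacle—is the $\Rightarrow$ direction in the case $|V_2|=0$: I must argue that from an arbitrary weight-$3$ double dominating set $V_1=\{a,b,c\}$ one really does recover a spanning subgraph in $\mathcal H$. The subgraph induced by $\{a,b,c\}$ has no isolated vertex (totality of $f$), so it is $P_3$, $C_3$, or conceivably $P_2\cup\{$isolated$\}$—but the isolated possibility is killed by totality, so it is $P_3$ or $C_3$. Every $v\in V_0$ has $|N(v)\cap\{a,b,c\}|\ge 2$, so $\delta_H(v)\ge 2$ in the spanning subgraph $H$ obtained by deleting all $G$-edges with both ends in $V_0$; and in $H$ the set $V_0$ is independent by construction. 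Thus $H\in\mathcal H$. It remains to rule out that $G$ has two universal vertices, which is immediate from Theorem \ref{teo-tR2=2} since $\gamma_{t\{R2\}}(G)=3>2$. I also need to double-check the edge case $n=3$: then $V_0=\emptyset$, $G$ is $P_3$ or $C_3$ (which lies in $\mathcal H$ with zero extra vertices) and has no universal vertex if $G=P_3$, while $G=K_3$ has every vertex universal but $\gamma_{t\{R2\}}(K_3)=3$ with two universal vertices—so I should verify the statement's hypothesis "at most one universal vertex" is consistent; indeed $\gamma_{t\{R2\}}(K_3)=2$ by Theorem \ref{teo-tR2=2}, not $3$, so $K_3$ is correctly excluded. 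A final subtlety: when $H=K_{1,n-1}$ is a spanning subgraph, the center of the star is universal in $H$ but need not be universal in $G$—wait, it is, since $H\subseteq G$ means $N_G[\text{center}]\supseteq N_H[\text{center}]=V$. So that center is a universal vertex of $G$, and the "at most one universal vertex" hypothesis then forbids a second one, consistent with Theorem \ref{teo-tR2=2}. These small consistency checks, rather than any deep argument, are where the care is needed.
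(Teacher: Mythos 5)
Your proposal is correct and follows essentially the same route as the paper's proof: the same case split on a weight-$3$ function ($|V_2|=1$ versus $|V_2|=0$), the same explicit constructions for the converse, and the same use of Theorem~\ref{teo-tR2=2} to handle the universal-vertex condition; you merely spell out the spanning subgraph in $\mathcal{H}$ (deleting the $V_0$--$V_0$ edges) a bit more explicitly than the paper does. The appeal to Proposition~\ref{prop_private} is unnecessary, but harmless.
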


\begin{proof}
We first suppose that $\gamma_{t\{R2\}}(G)=3$. Let $f(V_0, V_1, V_2)$ be a $\gamma_{t\{R2\}}(G)$-function. By Theorem \ref{teo-tR2=2},
$G$ has at most one universal vertex. If $|V_2|=1$, then $|V_1|=1$. Let $V_1=\{v\}$ and $V_2 = \{w\}$. Notice that $v$ and $w$ are adjacent vertices. Since $f$ is a TR2DF, any vertex must be adjacent to $w$, concluding that $K_{1,n-1}$ is a spanning subgraph of $G$. Now, if $|V_2| = 0$, then $|V_1| = 3$. As $V_1$ is a TDS, the subgraph induced by $V_1$ is $P_3$ or $C_3$. Since $f$ is a TR2DF, we observe that $|N(x) \cap V_1| \geq 2$ for every $x\in V_0$. Hence, in this case, $G$ contains a spanning subgraph belonging to $\mathcal{H}$.

Conversely, let $G$ be a connected graph of order $n$ containing a graph $H\in \mathcal{H}\cup \{K_{1,n-1}\}$ as  a spanning subgraph. Notice that we can construct a TR2DF $g$ satisfying that $\omega(g)=3$. Hence $\gamma_{t\{R2\}}(G)\leq \omega(g)=3$. Moreover, since $G$ has at most one universal vertex, by Theorem \ref{teo-tR2=2} we have that $\gamma_{t\{R2\}}(G)\geq 3$, which completes the proof.
\end{proof}

\begin{theorem}\label{teo-tR2=n}
Let $G$ be a connected graph of order $n$. Then $\gamma_{t\{R2\}}(G)=n$ if and only if $G$ is $P_3$ or $H\odot N_1$ for some connected graph $H$.
\end{theorem}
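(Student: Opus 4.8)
The plan is to prove the two implications separately, the forward (``only if'') direction being the real work; throughout one may assume $n\ge 2$, since a connected graph of order $1$ has an isolated vertex.

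\emph{The ``if'' direction.} For $G=P_3$, Remark~\ref{rem-Cn-Pn}(i) gives $\gamma_{t\{R2\}}(P_3)=2\lceil 3/3\rceil+1=3=n$. For $G=H\odot N_1$ with $H$ connected on $m$ vertices we have $n=2m$, and the $m$ pendant edges of $G$ form a perfect matching. For any TR2DF $f$ and any pendant edge $\{h,\ell\}$ with $\ell$ the leaf, I would check that $f(h)+f(\ell)\ge 2$: if $f(\ell)=0$ the Roman condition at $\ell$ forces $f(h)=f(N(\ell))\ge 2$, and if $f(\ell)\ge 1$ the totality condition at $\ell$ forces $f(h)\ge 1$. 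Summing over the matching yields $\omega(f)\ge 2m=n$, so $\gamma_{t\{R2\}}(G)\ge n$; the reverse inequality is the trivial bound ($g\equiv 1$ is a TR2DF since $G$ has no isolated vertex). The case $n=2$, i.e. $G=K_2=K_1\odot N_1$, is immediate.

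\emph{The ``only if'' direction.} I would prove the contrapositive: if $G$ is connected of order $n$, $G\ne P_3$, and $G$ is not of the form $H\odot N_1$ with $H$ connected, then $\gamma_{t\{R2\}}(G)\le n-1$ (in particular $n\ge 3$). This uses two modifications of the all-$1$ function. First, if $G$ has a non-leaf vertex $v$ all of whose neighbours have degree $\ge 2$ (equivalently, $v$ is not a support vertex), set $g(v)=0$ and $g\equiv 1$ on $V(G)\setminus\{v\}$: the Roman condition at $v$ holds since $g(N(v))=\delta(v)\ge 2$, and every positively weighted vertex $x\ne v$ has a neighbour other than $v$ (otherwise $x$ would be a leaf with support $v$), so totality holds; thus $\gamma_{t\{R2\}}(G)\le n-1$. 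Hence we may assume every non-leaf of $G$ is a support vertex, in which case the number of leaves is at least the number of non-leaves, with equality exactly when $G$ has no strong support vertex; in that equality case $G=\langle A\rangle\odot N_1$, where $A$ is the (nonempty, connected-subgraph-inducing) set of non-leaves, which is excluded. So $G$ has a strong support vertex $v$ with $k\ge 2$ leaf neighbours. If $G=K_{1,n-1}$, then $n\ge 4$ (as $G\ne P_3$) and assigning $2$ to the centre, $1$ to one leaf and $0$ to the remaining leaves gives a TR2DF of weight $3\le n-1$. Otherwise $v$ has a neighbour $w$ which is not one of its leaves, hence not a leaf at all; put $g(v)=2$, $g\equiv 0$ on the leaf neighbours of $v$, and $g\equiv 1$ elsewhere. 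I would then check that the only $0$-weighted vertices are the leaf neighbours of $v$, each with $g(N(\cdot))=g(v)=2$, and that totality holds at $v$ via $w$ and at every other positively weighted vertex $x$ via a neighbour that is a leaf whose support is not $v$ (when $x$ is a leaf) or a leaf neighbour that is not a leaf of $v$ (when $x$ is a non-leaf, using that $x$ is a support), in either case weighted $1$. The weight is $2+(n-1-k)=n+1-k\le n-1$.

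Putting the pieces together proves the theorem. The main obstacle will be the verification of the totality condition in the second modification, after all leaf neighbours of the strong support $v$ have been zeroed out; this is precisely where the two forced hypotheses --- ``every non-leaf is a support'' and ``$G$ is not a star'' --- are needed, and some care is required to make sure each remaining vertex still sees a positively weighted neighbour.
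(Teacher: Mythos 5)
Your proof is correct and follows essentially the paper's own argument: the substantive (``only if'') direction rests on the same two weight modifications --- zeroing a vertex outside $L(G)\cup S(G)$, and giving a strong support vertex weight $2$ while zeroing leaves below it --- merely recast as a contrapositive, the only real deviations being that you zero \emph{all} leaf neighbours of the strong support (which forces your star sub-case and the auxiliary neighbour $w$, whereas the paper zeroes just two of its leaves and needs no such split) and that you supply an actual proof, via the pendant-edge matching lower bound, of the ``if'' direction that the paper dismisses as straightforward. One small slip to fix in the write-up: for a positively weighted leaf $x$, the totality witness is its support vertex (which is distinct from $v$ and hence weighted $1$), not ``a neighbour that is a leaf''.
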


\begin{proof}
If $G$ is $P_3$ or $H\odot N_1$ for some connected graph $H$, then it is straightforward to see that $\gamma_{t\{R2\}}(G)=n$. From now on we assume that $G$ is a connected graph such that $\gamma_{t\{R2\}}(G)=n$. If $n=2$, then $G\cong P_2\cong N_1\odot N_1$, and if $n=3$, then $G\cong P_3$. Hence, we consider that $n\geq 4$. Suppose there exists a vertex $v\notin L(G)\cup S(G)$. Notice that the function $f$, defined by $f(v)=0$ and $f(x)=1$ whenever $x\in V(G)\setminus \{v\}$, is a TR2DF of weight $\omega(f)=n-1$, which is a contradiction. Thus $V(G)=L(G)\cup S(G)$.

Now, suppose there exists a vertex $u\in S_s(G)$ and let $h_1,h_2$ be two leaves adjacent to $u$. We consider the function $g$ defined by $g(h_1)=g(h_2)=0$, $g(u)=2$ and $g(x)=1$ whenever $x\in V(G)\setminus \{u,h_1,h_2\}$. Hence, $g$ is a TR2DF of weight $\omega(g)=n-1$, which is again a contradiction. Thus $S_s(G)=\emptyset$ and, as a consequence, $G\cong H\odot N_1$ for some connected graph $H$.
\end{proof}

Based on the trivial bound $2\leq \gamma_{t\{R2\}}(G)\leq n$ and the characterizations above, it is natural to think into the existence of graphs achieving all the other possible values in the range given by such bounds for $\gamma_{t\{R2\}}(G)$. That is made in our next result, and for it, we need two previous observations that appear first.

\begin{observation}\label{obs-sup-adj}
For any connected graph $G$ containing two adjacent support vertices $v$ and $w$, there exists a $\gamma_{t\{R2\}}(G)$-function $f$ satisfying $f(v)=f(w)=2$.
\end{observation}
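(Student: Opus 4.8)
The plan is to start from an arbitrary $\gamma_{t\{R2\}}(G)$-function $f(V_0,V_1,V_2)$ and to perform two independent local exchanges, one around $v$ and one around $w$, each raising the label of a support vertex to $2$ without increasing the total weight.

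First I would fix the setup: since $v$ and $w$ are support vertices, choose a leaf $v'$ adjacent to $v$ and a leaf $w'$ adjacent to $w$. As $v$ and $w$ are adjacent, neither of them is a leaf (otherwise $G\cong K_2$), so $v,v',w,w'$ are four pairwise distinct vertices; this disjointness is what will keep the two exchanges from interfering. Next I would record the elementary fact that $f(v)\ge 1$ and $f(w)\ge 1$: indeed $N(v')=\{v\}$, so if $f(v')=0$ the Roman $\{2\}$ condition forces $f(v)=f(N(v'))\ge 2$, while if $f(v')\ge 1$ the totality condition forces $v'$ to have a neighbour with positive label, which can only be $v$; the argument for $w$ is symmetric.

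The heart of the proof is the exchange step. If $f(v)=1$, then by the same dichotomy $f(v')\ge 1$, and I would redefine $f(v):=2$ and $f(v'):=0$, changing the weight by $1-f(v')\le 0$. I would then verify that the new function is still a TR2DF: the vertex $v'$ now lies in $V_0$ and its unique neighbour $v$ has label $2$, so $v'$ satisfies the Roman $\{2\}$ condition; the vertex $v$ has label $2\ge 1$ and still has a neighbour with positive label, namely $w$ (using that $v$ and $w$ are adjacent and $f(w)\ge 1$), so the totality condition holds at $v$; and no other vertex is affected, since lowering a label only at the leaf $v'$ cannot harm any vertex other than $v$, while raising the label at $v$ can only help. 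The value $f(w)$ is untouched by this modification, so it still equals $1$ or $2$; if it equals $1$, I would repeat the identical move with $w$ and $w'$, this time using that $v$ and $w$ are adjacent and that now $f(v)=2$ to restore the totality condition at $w$.

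Finally I would conclude: the resulting function is a TR2DF with $f(v)=f(w)=2$ and weight at most $\gamma_{t\{R2\}}(G)$, so by minimality it is a $\gamma_{t\{R2\}}(G)$-function. The one point demanding care is precisely the verification that setting the pendant leaves to $0$ does not destroy the totality condition at $v$ and at $w$: this is where the adjacency of $v$ and $w$ is essential, since each of them becomes the other's required positive neighbour once its own pendant leaf has been zeroed, and it is also where the disjointness of the pairs $\{v,v'\}$ and $\{w,w'\}$ matters, so that performing the move at $v$ leaves intact the hypotheses needed for the move at $w$. The rest is routine bookkeeping of labels.
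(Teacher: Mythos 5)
Your argument is correct and complete; the paper states this result as an Observation without proof, and your leaf-exchange argument (if $f(v)=1$ then the pendant leaf $v'$ must carry positive weight, so raise $v$ to $2$, zero out $v'$, and let the adjacent support vertex serve as the totality witness, then repeat at $w$) is exactly the routine verification the paper omits, with the disjointness of $\{v,v'\}$ and $\{w,w'\}$ and the non-increase of weight handled properly. The only caveat is the degenerate case $G\cong K_2$, which you rightly set aside: there the statement itself fails (the unique $\gamma_{t\{R2\}}(K_2)$-function assigns $1$ to both vertices), so the observation is implicitly intended for graphs other than $K_2$, exactly as you read it.
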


\begin{observation}\label{obs-strong}
Let $G$ be a connected graph different from a star graph. If $v\in S_s(G)$, then there exists a $\gamma_{t\{R2\}}(G)$-function ($\gamma_{tR}(G)$-function) $f$ satisfying that $f(v)=2$ and $f(N(v)\cap L(G))=0$.
\end{observation}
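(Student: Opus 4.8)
The plan is to start from an arbitrary $\gamma_{t\{R2\}}(G)$-function $f(V_0,V_1,V_2)$ and modify it into a function $f'$ of no larger weight with $f'(v)=2$ and $f'(\ell)=0$ for every leaf $\ell$ adjacent to $v$. Write $A=N(v)\cap L(G)$, so $|A|\ge 2$ because $v\in S_s(G)$, and $B=N(v)\setminus A$; since $G$ is connected and not a star, $v$ has a neighbour that is not a leaf, so $B\ne\emptyset$. The first step is to record what the leaves of $A$ force on $f$: a leaf $\ell\in A$ with $f(\ell)=0$ requires $f(v)\ge2$, since $N(\ell)=\{v\}$, and a leaf with $f(\ell)\ge1$ requires $f(v)\ge1$ by the totality condition at $\ell$; in particular $f(v)\ge1$, and moreover $f(v)+\sum_{\ell\in A}f(\ell)\ge2$ in all cases. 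The sharper bound $f(v)+\sum_{\ell\in A}f(\ell)\ge3$ holds whenever $f$ vanishes on all of $B$: then the totality condition at $v$ forces some leaf of $A$ to be positive, and if in addition $f(v)=1$ then every leaf of $A$ is positive, so $\sum_{\ell\in A}f(\ell)\ge|A|\ge2$.

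Next I would define $f'(v)=2$, $f'(\ell)=0$ for all $\ell\in A$, and $f'=f$ elsewhere, \emph{except} that if $f$ is identically $0$ on $B$ I also pick one $b_0\in B$ and set $f'(b_0)=1$. Checking that $f'$ is a TR2DF proceeds vertex by vertex: each $\ell\in A$ has $f'(\ell)=0$ and its unique neighbour $v$ has $f'(v)=2$; the vertex $v$ has $f'(v)=2$ and a positive neighbour under $f'$, namely a vertex of $B$ that was already positive or the raised $b_0$; the vertex $b_0$, when used, has $f'(b_0)=1$ and the positive neighbour $v$; and any remaining vertex $x$ has $f'(x)=f(x)$ and satisfies $f'\ge f$ on $N(x)$ — indeed $N(x)$ contains no leaf of $A$, since such a leaf is adjacent only to $v\neq x$, and on the rest of $N(x)$ the values only increased — so its R2D-condition, if applicable, is inherited from $f$, and any neighbour witnessing the totality condition at $x$ under $f$ lies outside $A$ and keeps a positive value under $f'$.

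For the weight, $\omega(f')-\omega(f)=(2-f(v))-\sum_{\ell\in A}f(\ell)$ in the first case and $\omega(f')-\omega(f)=(2-f(v))-\sum_{\ell\in A}f(\ell)+1$ in the second, and by the two inequalities from the first step both quantities are $\le0$. Hence $\omega(f')\le\omega(f)=\gamma_{t\{R2\}}(G)$, so $f'$ is itself a $\gamma_{t\{R2\}}(G)$-function, which is the assertion.

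Finally, the $\gamma_{tR}(G)$-version uses the very same construction and weight computation; the only change in the verification is that the role of ``a positive neighbour realizing $f(N(u))\ge2$'' is played by ``a neighbour with value exactly $2$'', and $f'(v)=2$ supplies such a neighbour to every leaf of $A$ without removing one elsewhere, because if $f(v)<2$ then $v$ was no vertex's value-$2$ witness, while if $f(v)=2$ then $f'(v)=2$ as well. I expect the one mildly delicate point to be the weight bookkeeping in the case where $f$ vanishes on $B$, namely deducing the strict inequality $f(v)+\sum_{\ell\in A}f(\ell)\ge3$; the rest is routine verification.
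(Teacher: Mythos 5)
Your argument is correct, and it also covers the parenthetical $\gamma_{tR}(G)$ version: the paper states this as an Observation with no proof at all, so there is nothing to compare against, but your reweighting (set $f'(v)=2$, zero out $N(v)\cap L(G)$, and add a unit at some non-leaf neighbour $b_0$ only when $f$ vanishes on $N(v)\setminus L(G)$) is exactly the routine exchange the authors implicitly rely on. The two key estimates you isolate, $f(v)+\sum_{\ell\in N(v)\cap L(G)}f(\ell)\geq 2$ in general and $\geq 3$ when $f$ is zero on all non-leaf neighbours of $v$, do hold as you argue, and since the leaves of $A$ have $v$ as their only neighbour, no other vertex's R2DF, TRDF or totality condition is disturbed, so $\omega(f')\leq\omega(f)$ gives the required minimum-weight function in both settings.
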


\begin{proposition}
For any integers $r,n$ with $3<r<n$, there exists a graph $F_{r,n}$ of order $n$ such that $\gamma_{t\{R2\}}(F_{r,n})=r$.
\end{proposition}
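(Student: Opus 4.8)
The plan is to realize $F_{r,n}$ as a disjoint union of a few well-understood graphs, exploiting the fact that $\gamma_{t\{R2\}}$ is additive over connected components. First I would record two ingredients. The first is that if a graph $G$ without isolated vertices is the disjoint union of graphs $G_1,\dots,G_m$, each without isolated vertices, then $\gamma_{t\{R2\}}(G)=\sum_{i=1}^m\gamma_{t\{R2\}}(G_i)$: both defining conditions of a TR2DF involve only a vertex and its neighbours, which lie in a single component, so the restriction of any TR2DF of $G$ to $G_i$ is a TR2DF of $G_i$, which gives $\gamma_{t\{R2\}}(G)\ge\sum_i\gamma_{t\{R2\}}(G_i)$, while pasting together minimum TR2DFs of the $G_i$ produces a TR2DF of $G$, which gives the reverse inequality. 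The second ingredient is that $\gamma_{t\{R2\}}(K_m)=2$ for every $m\ge 2$; this is immediate from Theorem~\ref{teo-tR2=2}, since every vertex of $K_m$ is universal. I will also use $\gamma_{t\{R2\}}(P_3)=3$, which is part of Theorem~\ref{teo-tR2=n}.

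Next I would choose the components so that the order is $n$ and the weight is $r$. If $r$ is even, let $F_{r,n}$ be the disjoint union of $K_{n-r+2}$ together with $r/2-1$ disjoint copies of $K_2$. Then $F_{r,n}$ has order $(n-r+2)+2(r/2-1)=n$, has $r/2\ge 2$ components, and (the only point needing a check being $n-r+2\ge 2$, which holds since $n>r$) has no isolated vertex; hence by the two ingredients $\gamma_{t\{R2\}}(F_{r,n})=2+2(r/2-1)=r$. If $r$ is odd, let $F_{r,n}$ be the disjoint union of one copy of $P_3$, one copy of $K_{n-r+2}$, and $(r-3)/2-1$ disjoint copies of $K_2$. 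Then $F_{r,n}$ has order $3+(n-r+2)+2((r-3)/2-1)=n$, again has no isolated vertex, and $\gamma_{t\{R2\}}(F_{r,n})=3+2+2((r-3)/2-1)=r$. Finally, the hypothesis $3<r<n$ guarantees that all the indicated components exist: $r/2-1\ge 1$ when $r$ is even (then $r\ge 4$), $(r-3)/2-1\ge 0$ when $r$ is odd (then $r\ge 5$), and $n-r+2\ge 3$ since $n\ge r+1$.

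I do not expect a genuine obstacle here, since once the two ingredients are in place the rest is bookkeeping; the only things that must not be overlooked are the additivity over components and the parity split: a disjoint union of complete graphs always has even weight (each $K_m$ contributes exactly $2$), so a single copy of $P_3$ is included precisely to produce the odd weights. If a connected $F_{r,n}$ were desired, the same strategy adapts --- for even $r$, take $K_{n-r/2}$ and attach a pendant leaf to $r/2$ of its vertices, forcing each leaf-support pair to carry weight at least $2$ and making total weight exactly $r$ attainable, and for odd $r$ additionally attach a short pendant path --- but the disconnected construction above already proves the statement as given.
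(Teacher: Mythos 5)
Your proof is correct for the statement as written, but it follows a genuinely different route from the paper. You exploit additivity of $\gamma_{t\{R2\}}$ over connected components and realize the weight $r$ as a disjoint union of $K_{n-r+2}$ with copies of $K_2$ (plus one $P_3$ when $r$ is odd), using Theorem \ref{teo-tR2=2} for $\gamma_{t\{R2\}}(K_m)=2$ and Remark \ref{rem-Cn-Pn} (or Theorem \ref{teo-tR2=n}) for $\gamma_{t\{R2\}}(P_3)=3$; the arithmetic and the checks that no component is an isolated vertex are all in order, and the additivity lemma, though not stated in the paper, is immediate since both defining conditions of a TR2DF are local to a vertex and its neighbourhood. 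The paper instead produces \emph{connected} graphs: for even $r$ it takes a corona $H\odot N_1$ of order $r$ with $n-r$ extra leaves attached to one vertex of $H$, and for odd $r$ it glues a corona of order $r-3$ to a star $K_{1,n-r+2}$, computing the exact value via Observations \ref{obs-sup-adj} and \ref{obs-strong} together with Theorem \ref{teo-tR2=n}. What your approach buys is simplicity --- the lower bound comes for free from component-wise minimality, with no private-neighbour or support-vertex analysis --- at the cost of producing disconnected examples; the paper's construction is more laborious (especially the lower bound $\gamma_{t\{R2\}}(F_{r,n})\geq r$ in the odd case) but yields connected realizations, which fits better with the surrounding characterizations (Theorems \ref{teo-tR2=3} and \ref{teo-tR2=n}) that are stated for connected graphs. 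Your closing sketch of a connected variant (a clique with $r/2$ pendant leaves, each leaf--support pair forcing weight at least $2$) is essentially sound and would close even that gap, though as written it is only a remark and the odd case of it is not worked out.
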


\begin{proof}
If $r$ is even, then we consider a graph $F_{r,n}$ constructed as follows. We begin with a corona product graph $H\odot N_1$ of order $|V(H\odot N_1)|=r$ and $n-r$ isolated vertices. To obtain $F_{r,n}$, we join (by an edge) one vertex $v$ of $H$ to each one of the $n-r$ isolated vertices. Notice that $F_{r,n}$ has order $n$. By Observation \ref{obs-sup-adj}, the function $f$, defined by $f(x)=2$ if $x\in V(H)$ and $f(x)=0$ if $x\in V(F_{r,n})\setminus V(H)$, is a $\gamma_{t\{R2\}}(F_{r,n})$-function and so, $\gamma_{t\{R2\}}(F_{r,n})=\omega(f)=r$.

On the other hand, if $r$ is odd, we construct a graph  $F_{r,n}$ as follows. We begin with a corona product graph $H\odot N_1$ of order $|V(H\odot N_1)|=r-3$ and a star graph $K_{1,n-r+2}$. To obtain $F_{r,n}$, we join (by an edge) one vertex $v$ of $H$ to one leaf, namely $h$, of the star $K_{1,n-r+2}$. Hence, $F_{r,n}$ has order $n$. Now, we consider the function $f$, defined by $f(h)=1$, $f(x)=2$ if $x\in S(F_{r,n})$ and $f(x)=0$ otherwise. Notice that $f$ is a TR2DF  on $F_{r,n}$ and so, $\gamma_{t\{R2\}}(F_{r,n})\leq \omega(f)=r$.

Let $v$ be the support vertex of $K_{1,n-r+2}$. Since $|V(K_{1,n-r+2})|\geq 4$, $v\in S_s(F_{r,n})$. By Observation \ref{obs-strong}, there exists a $\gamma_{t\{R2\}}(F_{r,n})$-function $g(V_0,V_1,V_2)$ such that $g(v)=2$ and $g(x)=0$ if $x\in N(v)\cap L(F_{r,n})$. Hence $g(h)\geq 1$ because $V_1\cup V_2$ is a TDS of $F_{r,n}$. Moreover, notice that the function $g$ restricted to $V(H\odot N_1)$, say $g'$, is a TR2DF on $H\odot N_1$. So, by statement above and Theorem \ref{teo-tR2=n}, $\omega(g')\geq \gamma_{t\{R2\}}(H\odot N_1)=r-3$. Therefore,
$\gamma_{t\{R2\}}(F_{r,n})=\omega(g)=g(N[v])+\omega(g')\geq 3+r-3=r$. Consequently, it follows that $\gamma_{t\{R2\}}(F_{r,n})=r$ and the proof is complete.
\end{proof}

\subsection{Trees $T$ with $\gamma_{t\{R2\}}(T)=\gamma_{tR}(T)$}

We begin this subsection with a theoretical characterization of the graphs $G$ satisfying the equality $\gamma_{t\{R2\}}(G)=\gamma_{tR}(G)$.

\begin{theorem}\label{teo-tR2-tR}
Let $G$ be a graph. Then $\gamma_{t\{R2\}}(G)=\gamma_{tR}(G)$ if and only if there exists a $\gamma_{t\{R2\}}(G)$-function $f(V_0,V_1,V_2)$ such that $V_{0,1}=\emptyset$.
\end{theorem}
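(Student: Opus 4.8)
The plan is to prove the two implications separately, using the fact that a TR2DF differs from a TRDF precisely in how vertices of $V_0$ are allowed to be dominated: in a TRDF every zero vertex needs a neighbor labelled $2$, whereas in a TR2DF a zero vertex may instead be covered by two neighbors labelled $1$. Recall the notation $V_{0,2}=\{v\in V_0\colon N(v)\cap V_2\neq\emptyset\}$ and $V_{0,1}=V_0\setminus V_{0,2}$; so $V_{0,1}$ is exactly the set of zero vertices that are NOT adjacent to any vertex of weight $2$, and a TR2DF $f$ is already a TRDF if and only if $V_{0,1}=\emptyset$.

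For the ``if'' direction, suppose there is a $\gamma_{t\{R2\}}(G)$-function $f(V_0,V_1,V_2)$ with $V_{0,1}=\emptyset$. Then every vertex of $V_0=V_{0,2}$ has a neighbor in $V_2$, and since $f$ is a TDF, every vertex of $V_1\cup V_2$ has a neighbor in $V_1\cup V_2$. Hence $f$ is a TRDF, giving $\gamma_{tR}(G)\le\omega(f)=\gamma_{t\{R2\}}(G)$. Combined with $\gamma_{t\{R2\}}(G)\le\gamma_{tR}(G)$ from Proposition~\ref{prop-inequalities}~(i), equality follows. This direction is immediate.

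For the ``only if'' direction, assume $\gamma_{t\{R2\}}(G)=\gamma_{tR}(G)$ and let $g(W_0,W_1,W_2)$ be a $\gamma_{tR}(G)$-function. Since $g$ has minimum TRDF weight and $\gamma_{tR}(G)=\gamma_{t\{R2\}}(G)$, the function $g$ is simultaneously an optimal TR2DF. Now in $g$, every vertex $v\in W_0$ is adjacent to some vertex of $W_2$ by the definition of a TRDF, so with respect to $g$ we have $W_{0,1}=\emptyset$. Thus $g$ is the desired $\gamma_{t\{R2\}}(G)$-function with empty $V_{0,1}$, and we are done.

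I expect no real obstacle here: the statement is essentially an unpacking of definitions together with Proposition~\ref{prop-inequalities}~(i). The only point requiring a line of care is making explicit that a TRDF is automatically a TR2DF (every weight-$2$ neighbor contributes $2$ to $f(N(v))$, so $f(N(v))\ge 2$), which is already recorded in the proof of Proposition~\ref{prop-inequalities}; and conversely that a TR2DF with $V_{0,1}=\emptyset$ satisfies the TRDF condition verbatim. One should also note that the hypothesis implicitly requires $G$ to have no isolated vertices (both parameters are only defined then), which is consistent with the rest of the section. If one wanted the characterization in a more combinatorial form one could further massage $g$, but for this theorem the above suffices.
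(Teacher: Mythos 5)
Your proof is correct and follows essentially the same route as the paper: for the forward direction you take a $\gamma_{tR}(G)$-function, note it is automatically an optimal TR2DF with empty $V_{0,1}$, and for the converse you observe that a $\gamma_{t\{R2\}}(G)$-function with $V_{0,1}=\emptyset$ is a TRDF and invoke Proposition~\ref{prop-inequalities}~(i). No issues to report.
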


\begin{proof}
Suppose that $\gamma_{t\{R2\}}(G)=\gamma_{tR}(G)$. Let $f(V_0,V_1,V_2)$ be a $\gamma_{tR}(G)$-function. Since every TRDF is a TR2DF, $f$ is a $\gamma_{t\{R2\}}(G)$-function as well, and satisfies that $V_{0,1}=\emptyset$. Conversely, suppose there exists a $\gamma_{t\{R2\}}(G)$-function $f'(V_0',V_1',V_2')$ such that $V_{0,1}'=\emptyset$. So, $V_0'=V_{0,2}'$, which implies that $f'$ is a TRDF on $G$. Thus, $\gamma_{tR}(G)\leq \omega(f')=\gamma_{t\{R2\}}(G)$. Hence, Proposition \ref{prop-inequalities} leads to $\gamma_{t\{R2\}}(G)=\gamma_{tR}(G)$.
\end{proof}

The characterization above clearly lacks of usefulness since it precisely depends on finding a $\gamma_{t\{R2\}}(G)$-function which satisfies a specific condition. In that sense, it appears an open problem to characterize the graphs $G$ which satisfy the equality $\gamma_{t\{R2\}}(G)=\gamma_{tR}(G)$. In this subsection we give a partial solution to this problem for the particular case of trees. To this end, we require the next results and extra definitions.

\begin{observation}\label{obs-strong-var-1}
Let $G$ be a connected graph. If $v\in S_s(G)$, then there exists a $\gamma_{t\{R2\}}(G)$-function ($\gamma_{tR}(G)$-function) $f$ satisfying that $f(v)=2$ and $f(h)=0$ for some vertex $h\in N(v)\cap L(G)$.
\end{observation}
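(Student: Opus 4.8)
The plan is to start from an arbitrary $\gamma_{t\{R2\}}(G)$-function $f(V_0,V_1,V_2)$ (the argument for a $\gamma_{tR}(G)$-function being word-for-word the same, with the phrase ``$f(N(w))\ge 2$'' replaced throughout by ``$w$ has a neighbor in $V_2$'') and to reach the desired function through a short reallocation of weight near $v$. Write $h_1,\dots,h_k$ for the leaves adjacent to $v$; since $v\in S_s(G)$ we have $k\ge 2$, and each $h_i$ has $v$ as its unique neighbor. First I would record two preliminary facts. Because $h_1$ is a leaf, if $f(v)=0$ then $h_1$ cannot satisfy its defining condition (its only neighbor is $v$), so $f(v)\ge 1$; and if $f(v)=1$, then no $h_i$ can have value $0$, since such an $h_i$ would require $f(N(h_i))=f(v)\ge 2$. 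Hence when $f(v)=1$ every $h_i$ satisfies $f(h_i)\ge 1$.

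The main case-work is then on the value $f(v)\in\{1,2\}$. If $f(v)=2$ and some $h_i$ already has $f(h_i)=0$, there is nothing to prove (take $h=h_i$); otherwise all $h_i$ have value $\ge 1$, and I would set $f'(h_1)=0$ and leave $f$ unchanged elsewhere. If $f(v)=1$, then all $h_i$ have value $\ge 1$ by the preliminary fact, and I would set $f'(v)=2$ and $f'(h_1)=0$, leaving $f$ unchanged elsewhere. In both constructions $\omega(f')\le\omega(f)$: in the first the weight strictly drops, and in the second it changes by $(2-1)+(0-f(h_1))\le 0$ because $f(h_1)\ge 1$. So once $f'$ is shown to be a TR2DF it is automatically a $\gamma_{t\{R2\}}(G)$-function, and by construction $f'(v)=2$ and $f'(h_1)=0$, as required.

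It remains to check that $f'$ is indeed a TR2DF, and this is where the leaf structure does the work: the only vertices whose value changes are $v$ (never decreased) and $h_1$, and $h_1$ is adjacent only to $v$. Thus $h_1$, now with value $0$, is served by $f'(v)=2$; $v$, now with value $\ge 1$, has the positive neighbor $h_2$ (here $k\ge 2$ is essential); every $h_i$ with $i\ge 2$ keeps its positive neighbor $v$; and for any other vertex $u$ one has $f'(N(u))\ge f(N(u))$, since $N(u)$ can only have gained from the increase at $v$ and has lost nothing because $u\notin N(h_1)$ whenever $u\neq v$, so the condition of $u$ is preserved. The only point requiring care is exactly this bookkeeping that no vertex loses a needed neighbor when $h_1$ is zeroed out, and it is entirely handled by the observation that $h_1$ is a leaf whose sole neighbor is $v$ together with the fact that $v$ is never downgraded; I do not anticipate any genuine obstacle beyond this routine verification.
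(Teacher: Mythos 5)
Your argument is correct, and in fact the paper offers no proof at all for this statement: it is recorded as an Observation (like Observations \ref{obs-sup-adj} and \ref{obs-strong}) and used later without justification, so there is nothing of the authors' to compare against beyond the implicit folklore argument. Your local reallocation at $v$ and its leaf neighbors is exactly the natural justification: $f(v)=0$ is impossible because a leaf neighbor of $v$ would violate either the domination or the totality condition, $f(v)=1$ forces all leaf neighbors of $v$ to be positive, and then either $f$ already works or you shift weight so that $v$ gets $2$ and one leaf gets $0$, with the verification resting entirely on the fact that the zeroed leaf has $v$ as its unique neighbor and that $v$ retains the positive witness $h_2$ (this is where $v\in S_s(G)$ is used); the same words prove the $\gamma_{tR}$ version. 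One small logical point to tidy: in the subcase $f(v)=2$ with all leaf neighbors positive, your $f'$ would be a TR2DF of weight strictly less than $\gamma_{t\{R2\}}(G)$, which is impossible rather than ``automatically a $\gamma_{t\{R2\}}(G)$-function''; the correct reading is that this subcase cannot occur for a minimum-weight $f$, so some leaf neighbor of $v$ is already assigned $0$ and $f$ itself is the desired function. The same remark applies to the sub-possibility $f(h_1)=2$ when $f(v)=1$. These are phrasing repairs, not gaps, and the proof stands.
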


\begin{observation}\label{obs-lem-tR2-tR}
If $T'$ is a subtree of a tree $T$, then $\gamma_{t\{R2\}}(T')\leq \gamma_{t\{R2\}}(T)$ and $\gamma_{tR}(T')\leq \gamma_{tR}(T)$.
\end{observation}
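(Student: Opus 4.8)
The plan is to reduce to the deletion of a single leaf and then induct on $|V(T)|-|V(T')|$. First I would note that if $T'$ is a proper subtree of $T$, then $T$ can be obtained from $T'$ by repeatedly attaching a new leaf, so, reversing the process, $T'$ can be reached from $T$ by repeatedly deleting a leaf of the current tree that does not belong to $V(T')$. To see that such a leaf always exists, root the current tree $T''$ at a vertex of $T'$ and take a vertex $u\in V(T'')\setminus V(T')$ of maximum depth; any child of $u$ would again lie in $V(T'')\setminus V(T')$ (otherwise the path from that child to the root would avoid $u$, contradicting connectivity of $T'$) and be deeper than $u$, so $u$ has no children and is a leaf. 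Every tree occurring in this process contains $T'$, hence has at least two vertices and therefore no isolated vertex, so both parameters are defined throughout; moreover, whenever we delete a leaf $\ell$ from such a tree $T''\neq T'$, its support vertex $s$ satisfies $\delta_{T''}(s)\ge 2$ since $T''$ has at least three vertices. It thus suffices to prove: for a leaf $\ell$ of a tree $T$ with $|V(T)|\ge 3$ and support vertex $s$, both $\gamma_{t\{R2\}}(T-\ell)\le\gamma_{t\{R2\}}(T)$ and $\gamma_{tR}(T-\ell)\le\gamma_{tR}(T)$ hold.

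For this step I would take a $\gamma_{t\{R2\}}(T)$-function (resp. $\gamma_{tR}(T)$-function) $f$ and look at its restriction $f'$ to $V(T-\ell)$, noting that $s$ is the only vertex whose neighborhood changes. If $f(\ell)=0$, then $f'$ is already a TR2DF (resp. TRDF) on $T-\ell$: the weight of $N(s)$ is unaffected, and the positive (resp. value-$2$) neighbor of $s$ that $f$ provides cannot be $\ell$; hence $\gamma_{t\{R2\}}(T-\ell)\le\omega(f')=\omega(f)$. If $f(\ell)\ge 1$, the totality requirement at the leaf $\ell$ forces $f(s)\ge 1$, so $s$ no longer needs the $V_0$-condition in $T-\ell$ and the only possible failure of $f'$ is that $s$ might have lost its unique positive neighbor. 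If $s$ retains a positive neighbor, $f'$ works and $\omega(f')=\omega(f)-f(\ell)<\omega(f)$. Otherwise, using $\delta_T(s)\ge 2$, I would pick a neighbor $s'\neq\ell$ of $s$ and raise its value to $1$ (if it was $0$); the resulting $f''$ has $\omega(f'')\le\omega(f')+1\le\omega(f)$, the vertices $s$ and $s'$ are now mutual positive neighbors, and raising a value never breaks any R2DF, Roman, or totality condition elsewhere. Either way $\gamma_{t\{R2\}}(T-\ell)\le\gamma_{t\{R2\}}(T)$, and the same argument verbatim gives $\gamma_{tR}(T-\ell)\le\gamma_{tR}(T)$.

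The step I expect to be the genuine obstacle is exactly the totality (total domination) condition at the support vertex $s$ in the case $f(\ell)\ge 1$: deleting $\ell$ can remove the sole positive neighbor of $s$, which is where a naive restriction argument would break down. The resolution relies on two features of the tree setting working together — that $f(\ell)\ge 1$ frees up a unit of weight, and that $\delta_T(s)\ge 2$ leaves another neighbor of $s$ available to absorb it — so no actual increase in total weight is incurred. All remaining checks (that the modified function is still a valid function, in particular that promoting $s'$ from $0$ to $1$ cannot destroy the required value-$2$ neighbor of some $V_0$ vertex adjacent to $s'$) are routine, since they only involve increasing values.
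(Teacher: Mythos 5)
Your proof is correct. The paper states this result as an Observation and gives no proof at all, so there is nothing to compare against; your leaf-deletion induction (reduce to removing a single leaf $\ell$ outside $V(T')$, restrict an optimal function, and in the only delicate case --- $f(\ell)\geq 1$ with $\ell$ the unique positive neighbor of the support vertex $s$ --- spend the freed unit of weight on another neighbor $s'$ of $s$, which exists since $\delta_T(s)\geq 2$) is exactly the kind of routine verification the authors are implicitly relying on, and you have checked the one genuinely nontrivial point, namely the totality condition at $s$, correctly for both $\gamma_{t\{R2\}}$ and $\gamma_{tR}$.
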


By an \emph{isolated support vertex} of $G$ we mean an isolated vertex of the subgraph induced by the support vertices of $G$. The set of non isolated support vertices of $G$ is denoted by $S_{adj}(G)$.

The set of support vertices of $G$ labelled with two by some $\gamma_{tR}(G)$-function is denoted by $S_{tR,2}(G)$. The set of leaves of $G$ labelled with one by some $\gamma_{tR}(G)$-function is denoted by $L_{tR,1}(G)$.
The set of vertices of $G$ labelled with zero by all $\gamma_{t\{R2\}}(G)$-functions is denoted by $W_0(G)$.
The set of support vertices of $G$ labelled with one by all $\gamma_{t\{R2\}}(G)$-functions is denoted by $S_1(G)$.

For an integer $r\geq 1$, the graph $R_r$ is defined as the graph obtained from $P_4$ and $N_1$ by taking one copy of $N_1$ and $r$ copies of $P_4$ and joining by an edge one support vertex of each copy of $P_4$ with the vertex of $N_1$. In Figure \ref{figure-1} we show the example of $R_3$.

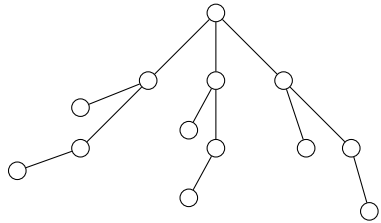
\begin{figure}[h]
\centering
\begin{tikzpicture}[scale=.6, transform shape]

\node [draw, shape=circle] (u) at  (0,1.5) {};
%\node at (0.5,1.5) {\Large $u$};

\node [draw, shape=circle] (s1) at  (-1.5,0) {};
\node [draw, shape=circle] (s2) at  (0,0) {};
\node [draw, shape=circle] (s3) at  (1.5,0) {};

\node [draw, shape=circle] (h1s1) at  (-3,-0.6) {};
%\node [draw, shape=circle] (h2s1) at  (-2,-1.5) {};

\node [draw, shape=circle] (h1s2) at  (-0.6,-1.1) {};
%\node [draw, shape=circle] (h2s2) at  (0.6,-1.1) {};

%\node [draw, shape=circle] (h1s3) at  (3,-0.6) {};
\node [draw, shape=circle] (h2s3) at  (2,-1.5) {};

\node [draw, shape=circle] (s11) at  (-3,-1.5) {};
\node [draw, shape=circle] (s22) at  (0,-1.5) {};
\node [draw, shape=circle] (s33) at  (3,-1.5) {};

\node [draw, shape=circle] (h1s11) at  (-4.4,-2) {};
%\node [draw, shape=circle] (h2s11) at  (-3.4,-2.9) {};

\node [draw, shape=circle] (h1s22) at  (-0.6,-2.6) {};
%\node [draw, shape=circle] (h2s22) at  (0.6,-2.6) {};

%\node [draw, shape=circle] (h1s33) at  (4.4,-2) {};
\node [draw, shape=circle] (h2s33) at  (3.4,-2.9) {};

\draw(u)--(s1);
\draw(u)--(s2);
\draw(u)--(s3);

\draw(s1)--(h1s1);
%\draw(s1)--(h2s1);
\draw(s1)--(s11);

\draw(s2)--(h1s2);
%\draw(s2)--(h2s2);
\draw(s2)--(s22);

%\draw(s3)--(h1s3);
\draw(s3)--(h2s3);
\draw(s3)--(s33);

\draw(s11)--(h1s11);
%\draw(s11)--(h2s11);

\draw(s22)--(h1s22);
%\draw(s22)--(h2s22);

%\draw(s33)--(h1s33);
\draw(s33)--(h2s33);

\end{tikzpicture}
\caption{The structure of the tree $R_3$.}
\label{figure-1}
\end{figure}

A \emph{near total Roman $\{2\}$-dominating function relative to a vertex $v$}, abbreviated near-TR2DF relative to $v$, on a graph $G$, is a function $f(V_0,V_1,V_2)$ satisfying the following.

\begin{itemize}
\item For each vertex $u\in V_0$, if $u=v$, then $\sum_{u\in N(v)}f(u)\geq 1$, while if $u\neq v$, then $\sum_{u\in N(v)}f(u)\geq 2$.
\item The subgraph induced by $V_1\cup V_2$ has no isolated vertex.
\end{itemize}

The weight of a near-TR2DF relative to $v$ on $G$ is the value $f(V(G))=\sum_{u\in V(G)} f(u)$. The minimum weight of a near-TR2DF relative to $v$ on $G$ is called the \emph{near total Roman $\{2\}$-domination number relative to $v$ of $G$}, which we denote as $\gamma_{t\{R2\}}^n(G;v)$. Since every TR2DF is a near-TR2DF, we note that $\gamma_{t\{R2\}}^n(G;v)\leq \gamma_{t\{R2\}}(G)$ for any vertex $v$ of $G$. We define a vertex $v\in V(G)$ to be a \emph{near stable vertex} of $G$ if $\gamma_{t\{R2\}}^n(G;v)=\gamma_{t\{R2\}}(G)$. For example, every leaf of any star $K_{1,n-1}$ with $n\geq 4$, is a near stable vertex. We remark that the terminology of ``near'' style parameters is a commonly used technique in domination theory. In order to simply  mention a recently published example where this was used, we can for instance refer to \cite{henning2016}.

Now on, in order to provide a constructive characterization for the trees which achieve the stated equality in Theorem \ref{teo-tR2-tR}, we consider the next family of trees. Let $\mathcal{F}$ be the family of trees $T$ that can be obtained from a sequence of trees $T_0, \ldots , T_k$, where $k \geq 0$, $T_0 \cong P_2$ and $T\cong T_k$. Furthermore, if $k \geq 1$, then for each $i \in \{1, \ldots ,k\}$, the tree $T_i$ can be obtained from the tree $T'\cong T_{i-1}$ by one of the following operations $F_1$, $F_2$, $F_3$, $F_4$,  $F_5$, $F_6$ or $F_7$. In such operations, by a join of two vertices we mean adding an edge between these two vertices.

\begin{description}
  \item[Operation $F_1$:] Add a tree $R_r$ with semi-support vertex $u$, and join $u$ to an arbitrary vertex $v$ of $T'$.
  \item[Operation $F_2$:]  Add a new vertex $u$ to $T'$ and join $u$ to a vertex $v\in S_{tR,2}(T')$.
  \item[Operation $F_3$:]  Add a new vertex $u$ to $T'$ and join $u$ to a vertex $v\in S_1(T')$.
  \item[Operation $F_4$:] Add a path $P_2$ and join a leaf to a  vertex $v\in S_{adj}(T')$.
  \item[Operation $F_5$:] Add a path $P_3$ with support vertex $u$,  and identify $u$ with a vertex $v\in L_{tR,1}(T')$.
  \item[Operation $F_6$:] Add a path $P_2$, and join a leaf to a near stable vertex $v\in L(T')\cup SS(T')$.
  \item[Operation $F_7$:] Add a path $P_3$, and join a leaf to a vertex $v\in W_0(T')$.
  \end{description}

We next show that every tree $T$ in the family $\mathcal{F}$ satisfies that $\gamma_{t\{R2\}}(T)=\gamma_{tR}(T)$.

\begin{theorem}\label{lem-right-1}
If $T \in \mathcal{F}$, then $\gamma_{t\{R2\}}(T)=\gamma_{tR}(T)$.
\end{theorem}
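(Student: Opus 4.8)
The plan is to prove this by induction on $k$, the number of operations used to construct $T \in \mathcal{F}$. For the base case $k=0$, we have $T \cong P_2$, and it is immediate that $\gamma_{t\{R2\}}(P_2) = \gamma_{tR}(P_2) = 2$. For the inductive step, suppose $T$ is obtained from $T'$ by one of the operations $F_1,\ldots,F_7$, where $T'$ is constructed by a shorter sequence, so by the inductive hypothesis $\gamma_{t\{R2\}}(T') = \gamma_{tR}(T')$. In view of Proposition \ref{prop-inequalities}(i), we always have $\gamma_{t\{R2\}}(T) \le \gamma_{tR}(T)$, so it suffices to show $\gamma_{tR}(T) \le \gamma_{t\{R2\}}(T)$ for each operation; equivalently, by Theorem \ref{teo-tR2-tR}, it suffices to exhibit a $\gamma_{t\{R2\}}(T)$-function $f$ with $V_{0,1} = \emptyset$.

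For each of the seven operations, I would carry out two estimates. First, an \emph{upper bound}: starting from a $\gamma_{t\{R2\}}(T')$-function (or a $\gamma_{tR}(T')$-function, or a near-TR2DF relative to the relevant vertex, depending on which special vertex class $v$ lies in), extend it to the newly added vertices of $R_r$, $P_2$, or $P_3$ by the obvious minimal labeling, and check the result is a TR2DF on $T$; this gives $\gamma_{t\{R2\}}(T) \le \gamma_{t\{R2\}}(T') + c$ for the appropriate constant $c$ attached to that operation. For instance, in $F_3$ (attaching a leaf $u$ to $v \in S_1(T')$) one sets $f(u)=1$ and keeps the old function, using that $v \in S_1(T')$ is already labelled $1$ and has a labelled neighbor, so $c=1$; in $F_1$ one adds a minimum TR2DF on $R_r$ of the appropriate weight together with the edge to $v$. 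Second, a matching \emph{lower bound}: given any $\gamma_{t\{R2\}}(T)$-function $f$, restrict it to $V(T')$ and to the attached gadget, argue (using the structure of the gadget — e.g. that a pendant $P_2$, $P_3$, or copy of $P_4$ forces a certain minimal weight on its own vertices, sometimes via Observations \ref{obs-strong}, \ref{obs-strong-var-1}, or \ref{obs-sup-adj}, and using the definitions of $S_{tR,2}$, $S_1$, $L_{tR,1}$, $W_0$, $S_{adj}$, and near stable vertex) that $\omega(f) \ge \gamma_{t\{R2\}}(T') + c = \gamma_{tR}(T') + c$. Combining, $\gamma_{t\{R2\}}(T) = \gamma_{tR}(T') + c$, and the explicit extended function constructed in the upper bound — built from a $\gamma_{tR}(T')$-function with $V_{0,1}=\emptyset$ — witnesses $\gamma_{tR}(T) \le \gamma_{tR}(T') + c$, so all inequalities are equalities and $\gamma_{t\{R2\}}(T) = \gamma_{tR}(T)$.

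The main obstacle is the case analysis itself: each operation is attached to a \emph{different} distinguished vertex set ($S_{tR,2}$, $S_1$, $S_{adj}$, $L_{tR,1}$, $W_0$, or the near stable vertices), and the whole point of requiring $v$ to lie in that particular set is to guarantee that some \emph{optimal} $\gamma_{t\{R2\}}(T')$-function behaves well at $v$ — e.g. that there is a $\gamma_{tR}(T')$-function assigning $2$ to $v$ (for $F_2$), or assigning $1$ to $v$ in \emph{every} optimal function (for $F_3$), or assigning $0$ to $v$ in every optimal function (for $F_7$), or that $\gamma_{t\{R2\}}^n(T';v)=\gamma_{t\{R2\}}(T')$ (for $F_6$). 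The delicate part is checking, in the lower-bound direction, that an arbitrary $\gamma_{t\{R2\}}(T)$-function can always be modified so that its restriction to $V(T')$ is (or dominates the weight of) an optimal function on $T'$ \emph{of the right special type}, without losing weight; this is where one repeatedly invokes the observations about support vertices and strong support vertices to normalize $f$ near the gadget. A secondary subtlety is operation $F_5$, which \emph{identifies} (rather than merely joins) the new support vertex with a vertex of $L_{tR,1}(T')$, so one must be careful that the resulting graph is still a tree and that the vertex classes are correctly recomputed; but once the normalization lemmas are in place, each individual operation reduces to a short, routine verification of the two inequalities above.
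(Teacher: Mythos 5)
Your plan coincides with the paper's own proof: induction on the number of operations, and for each operation an upper bound obtained by extending a $\gamma_{tR}(T')$-function to the added gadget (giving $\gamma_{tR}(T)\leq\gamma_{tR}(T')+c$) together with a lower bound obtained by restricting a suitably normalized $\gamma_{t\{R2\}}(T)$-function to $V(T')$ (giving $\gamma_{t\{R2\}}(T')\leq\gamma_{t\{R2\}}(T)-c$), after which Proposition \ref{prop-inequalities} and the inductive hypothesis collapse the chain exactly as you describe, with the special vertex classes ($S_{tR,2}$, $S_1$, $S_{adj}$, $L_{tR,1}$, $W_0$, near stable vertices) and Observations \ref{obs-sup-adj}, \ref{obs-strong}, \ref{obs-strong-var-1} used precisely for the normalizations you anticipate. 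The only caveat is that the seven verifications you call routine are where the paper does its real work (e.g.\ the relabeling when $f(u)>0$ in Operation $F_1$, and the weight-$3$ lower bound in Operation $F_7$ via $v\in W_0(T')$), but your outline of those checks is faithful to what is actually done.
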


\begin{proof}
We proceed by induction on the number $r(T)$ of operations required to construct the tree $T$. If $r(T)=0$, then $T\cong P_2$ and satisfies that $\gamma_{t\{R2\}}(T)=2=\gamma_{tR}(T)$. This establishes the base case. Hence, we now  assume that $k\geq 1$ is an integer and that each tree $T' \in \mathcal{F}$ with $r(T')<k$ satisfies that $\gamma_{t\{R2\}}(T')=\gamma_{tR}(T')$. Let $T \in \mathcal{F}$ be a tree with $r(T)=k$. Then, $T$ can be obtained from a tree $T' \in \mathcal{F}$ with $ r(T')=k-1$ by one of the seven operations above. We shall prove that $T$ satisfies that $\gamma_{t\{R2\}}(T)=\gamma_{tR}(T)$. We consider seven cases, depending on which operation is used to construct the tree $T$ from $T'$.\\

\noindent
\textbf{Case 1.} $T$ is obtained from $T'$ by Operation $F_1$. Assume $T$ is obtained from $T'$ by adding a tree $R_r$, being $u$ the semi-support vertex, and the edge $uv$ where $v$ is an arbitrary vertex of $T'$. Observe that, from any TRDF on $T'$, we can obtain a TRDF on $T$ by assigning the weight two to each support vertex and zero to another vertices of $R_r$. Hence, by Proposition \ref{prop-inequalities}, statement above and inductive hypothesis, we obtain
\begin{equation}\label{eq1}
\gamma_{t\{R2\}}(T)\leq \gamma_{tR}(T)\leq \gamma_{tR}(T')+4r=\gamma_{t\{R2\}}(T')+4r.
\end{equation}

Since $S(R_r)=S_{adj}(R_r)$, by using Observation \ref{obs-sup-adj}, there exists a $\gamma_{t\{R2\}}(T)$-function $f$ satisfying that $f(x)=2$ for every $x\in S(R_r)$. As a consequence, $f(h)=0$ for every vertex $h\in L(R_r)$.

If $f(u)=0$, then $f$ restricted to $V(T')$ is a TR2DF on $T'$,  implying that $\gamma_{t\{R2\}}(T')\leq f(V(T'))=\omega(f)-f(V(R_r))=\gamma_{t\{R2\}}(T)-4r$, and by the inequality chain (\ref{eq1}) it follows that $\gamma_{t\{R2\}}(T)=\gamma_{tR}(T)$.

Let $w\in N(v)\setminus \{u\}$ be a vertex such that $f(w)=\max\{f(x): x\in N(v)\setminus \{u\}\}$. If $f(u)>0$, then the function $g$, defined by $g(v)=\max\{f(v),f(v)f(w)+1\}$, (note that in any possibility, this maximum expression can never take a value larger than two), $g(w)=\max\{1,f(w)\}$ $g(u)=0$ and $g(x)=f(x)$ whenever $x\in V(T)\setminus \{v,w,u\}$, is a TR2DF on $T$ with weight $\omega(g)=\omega(f)=\gamma_{t\{R2\}}(T)$. So, $g$ is a $\gamma_{t\{R2\}}(T)$-function as well. As $g(u)=0$, $g$ restricted to $V(T')$ is a TR2DF on $T'$ and, by using a similar reasoning as in the previous case ($f(u)=0$), we obtain that $\gamma_{t\{R2\}}(T)=\gamma_{tR}(T)$.\\

\noindent
\textbf{Case 2.} $T$ is obtained from $T'$ by Operation $F_2$.
Assume $T$ is obtained from $T'$ by adding a new vertex $u$ and the edge $uv$, where $v\in S_{tR,2}(T')$. Hence, there exists a $\gamma_{tR}(T')$-function $f$ satisfying that $f(v)=2$. Notice that $f$ can be extended to a TRDF on $T$ by assigning the weight $0$ to $u$, which implies that $\gamma_{tR}(T)\leq \gamma_{tR}(T')$. By using Proposition \ref{prop-inequalities}, inequality above, inductive hypothesis and Observation \ref{obs-lem-tR2-tR}, we obtain $\gamma_{t\{R2\}}(T)\leq \gamma_{tR}(T)\leq \gamma_{tR}(T')=\gamma_{t\{R2\}}(T')\leq \gamma_{t\{R2\}}(T)$. Therefore, we must have equality throughout the inequality chain above. In particular, $\gamma_{t\{R2\}}(T)=\gamma_{tR}(T)$.\\

\noindent
\textbf{Case 3.} $T$ is obtained from $T'$ by Operation $F_3$.
Assume $T$ is obtained from $T'$ by adding a new vertex $u$ and the edge $uv$, where $v\in S_1(T')$. Since $v$ is a support of $T'$, $v$ is a strong support of $T$. So, by Observation \ref{obs-strong-var-1}, there exists a $\gamma_{t\{R2\}}(T)$-function $g$ satisfying that $g(v)=2$ and $g(u)=0$. Hence, $g$ restricted to $V(T')$ is a TR2DF on $T'$ with weight $\omega(g)=\gamma_{t\{R2\}}(T)$, but it is not a $\gamma_{t\{R2\}}(T')$-function because $v\in S_1(T')$. So, $\gamma_{t\{R2\}}(T')\leq \gamma_{t\{R2\}}(T)-1$.

Moreover, let $f$ be a $\gamma_{tR}(T')$-function. Since $\gamma_{t\{R2\}}(T')=\gamma_{tR}(T')$ and $v\in S_1(T')$, we obtain that $f(v)=1$. Hence, $f$ can be extended to a TRDF on $T$ by assigning the weight $1$ to $u$. Thus, $\gamma_{tR}(T)\leq\gamma_{tR}(T')+1$. By using Proposition \ref{prop-inequalities}, inequalities above and inductive hypothesis, we obtain that
$\gamma_{t\{R2\}}(T)\leq \gamma_{tR}(T)\leq\gamma_{tR}(T')+1=\gamma_{t\{R2\}}(T')+1\leq \gamma_{t\{R2\}}(T)$.

Therefore, we must have equality throughout the inequality chain above. In particular, $\gamma_{t\{R2\}}(T)=\gamma_{tR}(T)$.\\

\noindent
\textbf{Case 4.} $T$ is obtained from $T'$ by Operation $F_4$. Assume $T$ is obtained from $T'$ by adding a path $uu_1$ and the edge $uv$, where $v\in S_{adj}(T')$. Notice that every TRDF on $T'$ can be extended to a TRDF on $T$ by assigning the weight $1$ to $u$ and $u_1$. Hence, by Proposition \ref{prop-inequalities}, the statement above and the inductive hypothesis, we obtain
\begin{equation}\label{eq11}
\gamma_{t\{R2\}}(T)\leq \gamma_{tR}(T)\leq \gamma_{tR}(T')+2=\gamma_{t\{R2\}}(T')+2.
\end{equation}

We now show that $\gamma_{t\{R2\}}(T')\leq \gamma_{t\{R2\}}(T)-2$. Let $w\in N(v)\cap S(T')$. By Observation \ref{obs-sup-adj}, there exists a $\gamma_{t\{R2\}}(T)$-function $f$ such that $f(v)=f(w)=f(u)=2$ and $f(u_1)=0$.  Hence, $f$ restricted to $V(T')$ is a TR2DF on $T'$, which implies that $\gamma_{t\{R2\}}(T')\leq f(V(T'))=\gamma_{t\{R2\}}(T)-2$, as desired. In consequence, we must have equality throughout the inequality chain (\ref{eq11}). In particular, $\gamma_{t\{R2\}}(T)=\gamma_{tR}(T)$.\\

\noindent
\textbf{Case 5.} $T$ is obtained from $T'$ by Operation $F_5$.
Assume $T$ is obtained from $T'$ by identifying the vertex $u$ of path $u_1uu_2$ and the vertex $v$, where $v\in L_{tR,1}(T')$. Notice that there exists a $\gamma_{tR}(T')$-function $g$ satisfying that $g(v)=1$. So, $g$ can be extended to a TRDF on $T$ be assigning the weight $2$ to $u$ and the weight $0$ to $u_1$ and $u_2$. Therefore, by Proposition \ref{prop-inequalities}, the statement above and the hypothesis, we obtain $\gamma_{t\{R2\}}(T)\leq \gamma_{tR}(T)\leq \gamma_{tR}(T')+1=\gamma_{t\{R2\}}(T')+1$.

Moreover, by Observation \ref{obs-strong}, there exists a $\gamma_{t\{R2\}}(T)$-function $f$ satisfying that $f(v)=2$ and $f(u_1)=f(u_2)=0$. Notice that the function $f'$, defined by $f'(v)=1$ and $f'(x)=f(x)$ whenever $x\in V(T')\setminus \{v\}$, is a TR2DF on $T'$. So $\gamma_{t\{R2\}}(T')\leq \omega(f')=\gamma_{t\{R2\}}(T)-1$. As a consequence, we must have equality throughout the inequality chain above. In particular, $\gamma_{t\{R2\}}(T)=\gamma_{tR}(T)$.\\

\noindent
\textbf{Case 6.} $T$ is obtained from $T'$ by Operation $F_6$.
Assume $T$ is obtained from $T'$ by adding a path $uu_1$ and the edge $uv$, where $v$ is a near stable vertex belonging to $L(T')\cup SS(T')$. Let $s$ be a support vertex adjacent to $v$ in $T'$. Again, notice that every TRDF on $T'$ can be extended to a TRDF on $T$ by assigning the weight $1$ to $u$ and $u_1$. Hence, by the statement above, Proposition \ref{prop-inequalities} and the inductive hypothesis, we obtain the inequality chain (\ref{eq11}). We now show that $\gamma_{t\{R2\}}(T')\leq \gamma_{t\{R2\}}(T)-2$. For this, we consider the next two cases.\\

\noindent
\textit{Case 6.1.} $v\in L(T')$. Since $u\in S(T)$ and $\delta_T(v)=\delta_T(u)=2$, there exists a $\gamma_{t\{R2\}}(T)$-function $f$ satisfying that $f(u)=f(u_1)=1$, $f(v)=0$ and $f(s)>0$.  If $f$ restricted to $V(T')$ is a TR2DF on $T'$, then $\gamma_{t\{R2\}}(T')\leq f(V(T'))=\gamma_{t\{R2\}}(T)-2$. Conversely, suppose that $f$ restricted to $V(T')$ is not a TR2DF on $T'$. So, $f$ restricted to $V(T')$ is a near-TR2DF relative to $v$ on $T'$. Thus, as $v$ is a near stable vertex of $T'$, and so, $\gamma_{t\{R2\}}(T')= \gamma_{t\{R2\}}^n(T';v)\leq f(V(T'))=\gamma_{t\{R2\}}(T)-2$, as desired.\\

\noindent
\textit{Case 6.2.} $v\in SS(T')$. Let $f$ be a $\gamma_{t\{R2\}}(T)$-function such that $f(u_1)$ is minimum. Hence $f(u)+f(u_1)=2$ and $f(s)>0$ since $s$ is a support of $T$. If $f$ restricted to $V(T')$ is a TR2DF on $T'$, then $\gamma_{t\{R2\}}(T')\leq f(V(T'))=\gamma_{t\{R2\}}(T)-2$. Conversely, suppose that $f$ restricted to $V(T')$ is not a TR2DF on $T'$. Hence $f(v)=0$, implying that $f$ restricted to $V(T')$ is a near-TR2DF relative to $v$ on $T'$. Also, as $v$ is a near stable vertex of $T'$, it follows that $\gamma_{t\{R2\}}(T')=\gamma_{t\{R2\}}^n(T';v)\leq f(V(T'))=\gamma_{t\{R2\}}(T)-2$, as desired.

In consequence, we must have equality throughout the inequality chain (\ref{eq11}). In particular, $\gamma_{t\{R2\}}(T)=\gamma_{tR}(T)$.\\

\noindent
\textbf{Case 7.} $T$ is obtained from $T'$ by Operation $F_7$.
Assume $T$ is obtained from $T'$ by adding a path $uu_1u_2$ and the edge $uv$, where $v\in W_0(T')$. Let $g$ be a $\gamma_{tR}(T')$-function. Hence, $g$ can be extended to a TRDF on $T$ be assigning the weight $1$ to $u$, $u_1$ and $u_2$. Therefore, by Proposition \ref{prop-inequalities}, the statement above and the hypothesis, we obtain
\begin{equation}\label{eq-9}
\gamma_{t\{R2\}}(T)\leq \gamma_{tR}(T)\leq \gamma_{tR}(T')+3=\gamma_{t\{R2\}}(T')+3.
\end{equation}

We now show that $\gamma_{t\{R2\}}(T')\leq \gamma_{t\{R2\}}(T)-3$.  Let $f(V_0,V_1,V_2)$ be a $\gamma_{t\{R2\}}(T)$-function such that $|V_2|$ is minimum. Hence $f(u_1)+f(u_2)=2$. If $f(u)=0$, then $f(v)>0$ and also, $f$ restricted to $V(T')$ is a TR2DF on $T'$. As $v\in W_0(T')$, $f$ is not a $\gamma_{t\{R2\}}(T')$-function. Hence, $\gamma_{t\{R2\}}(T')\leq f(V(T'))-1\leq \gamma_{t\{R2\}}(T)-3$, as desired.

Now, we suppose that $f(u)>0$. In this case, we observe that $f(u)=1$ since $|V_2|$ is minimum. If $f(v)=0$, then the function $f'$, defined by $f'(v)=f(u)$ and $f'(x)=f(x)$ whenever $x\in V(T')\setminus \{v\}$, is a TR2DF on $T'$ such that $\omega(f')\leq\gamma_{t\{R2\}}(T)-2$. On the other hand, if $f(v)>0$, then
observe that $f(N(v)\setminus\{u\})=0$. Otherwise, if there exists $z\in N(v)\setminus \{u\}$ such that $f(z)>0$, then $f(u)=0$, which is a contradiction. Now, notice that the function $f''$, defined by $f''(w)=f(u)=1$ for some $w\in N(v)\setminus\{u\}$ and $f''(x)=f(x)$ whenever $x\in V(T')\setminus \{w\}$, is a TR2DF on $T'$ such that $\omega(f')\leq\gamma_{t\{R2\}}(T)-2$. Again, as $v\in W_0(T')$, $f'$ and $f''$ are not $\gamma_{t\{R2\}}(T')$-functions. Hence, $\gamma_{t\{R2\}}(T')\leq \gamma_{t\{R2\}}(T)-3$, as desired.

In consequence, we must have equality throughout the inequality chain (\ref{eq-9}). In particular, $\gamma_{t\{R2\}}(T)=\gamma_{tR}(T)$.
\end{proof}

We now turn our attention to the opposite direction concerning the theorem above. That is, we show that if a tree $T$ satisfies  $\gamma_{t\{R2\}}(T)=\gamma_{tR}(T)$, then it belongs to the family $\mathcal{F}$.

\begin{theorem}\label{lem-left-1}
Let $T$ be a tree. If $\gamma_{t\{R2\}}(T)=\gamma_{tR}(T)$, then $T \in \mathcal{F}$.
\end{theorem}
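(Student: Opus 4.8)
The plan is to prove the converse by induction on the order $n$ of the tree $T$, showing that any tree $T$ with $\gamma_{t\{R2\}}(T)=\gamma_{tR}(T)$ can be reduced by reversing one of the seven operations $F_1,\ldots,F_7$ to a strictly smaller tree $T'$ that still satisfies $\gamma_{t\{R2\}}(T')=\gamma_{tR}(T')$ and lies in $\mathcal{F}$ by the inductive hypothesis. The base cases are the small trees: $P_2\in\mathcal{F}$ directly, and one checks that $P_3$ and $P_4$ (which have $\gamma_{t\{R2\}}=\gamma_{tR}$) are obtained from $P_2$ by $F_6$ or similar, so they belong to $\mathcal{F}$. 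For the inductive step, we fix a tree $T$ of order $n\geq 4$ with the stated equality, root it at a leaf of a longest path, and analyze the structure near the end of a diametral path.

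The core of the argument is a careful case analysis based on the configuration of a deepest support vertex and its neighbors along a longest path $v_0 v_1 v_2 \cdots$ (with $v_0$ a leaf). First I would handle the situation where some vertex lies in $S_s(T)$ or where there are adjacent support vertices, using Observations~\ref{obs-strong}, \ref{obs-sup-adj} and \ref{obs-strong-var-1} to pin down the values of an optimal $\gamma_{t\{R2\}}(T)$-function on the relevant leaves and supports; these cases should correspond to removing a pendant $P_2$ or $P_3$ via $F_2,F_3,F_4$. When $v_1$ is a support of degree $2$ and $v_2$ has further structure, one distinguishes according to whether $v_2$ is itself a support, a semi-support vertex, or neither, and according to the possible labels forced on $v_1,v_2$ by Theorem~\ref{teo-tR2-tR} (which guarantees a $\gamma_{t\{R2\}}(T)$-function $f$ with $V_{0,1}=\emptyset$). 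The memberships $S_{tR,2}(T')$, $S_1(T')$, $L_{tR,1}(T')$, $W_0(T')$ and "near stable vertex" appearing in the operations are precisely the certificates one needs: in each reduction I would verify that the attachment vertex $v$ of $T'$ lies in the appropriate set by exhibiting (or arguing the nonexistence of) suitable extremal functions, and then check $\gamma_{t\{R2\}}(T')=\gamma_{tR}(T')$ by combining $\gamma_{t\{R2\}}(T')\le\gamma_{tR}(T')$ (Proposition~\ref{prop-inequalities}) with an inequality chain $\gamma_{tR}(T')\le\gamma_{tR}(T)-c=\gamma_{t\{R2\}}(T)-c\le\gamma_{t\{R2\}}(T')$, where $c$ is the weight removed; the hypothesis $\gamma_{t\{R2\}}(T)=\gamma_{tR}(T)$ and equality throughout then transfers the property down. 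Finally, the tree $R_r$ appearing in $F_1$ covers the case of a deep branch that is itself built from copies of $P_4$ hanging on a common vertex, which must be stripped all at once.

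The main obstacle I anticipate is not any single reduction but the bookkeeping required to show the case analysis is \emph{exhaustive} and that in every configuration at least one operation applies with its side condition satisfied. In particular, the delicate point is verifying the "near stable vertex" condition for $F_6$ and the $W_0$ condition for $F_7$: these require showing that after deleting a pendant $P_2$ or $P_3$, the restriction of an optimal function to $T'$ is either already a TR2DF of the right weight or a near-TR2DF relative to $v$ of that weight, and that no cheaper TR2DF of $T'$ can exist — the latter being exactly where one must rule out that $v$ could be labelled positively in some optimal function, i.e. that $v\in W_0(T')$. One must also be careful that the reduced tree has no isolated vertices and that its diametral structure is genuinely smaller, so the induction is well-founded. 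I would organize the proof as a sequence of claims that successively eliminate configurations (strong supports, then adjacent supports, then degree-$2$ supports with various $v_2$), mirroring the order of the operations, so that by the time all claims are established the only remaining possibility is $T\cong P_2$, completing the induction.
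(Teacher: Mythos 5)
Your outline follows exactly the route of the paper's proof: induction on the order, rooting $T$ at an end of a longest path, a case analysis governed by the degrees of the deepest support vertex $s$ and of its parent $v$, equality-forcing chains of the form $\gamma_{t\{R2\}}(T)\le \gamma_{t\{R2\}}(T')+c\le \gamma_{tR}(T')+c\le \gamma_{tR}(T)=\gamma_{t\{R2\}}(T)$ to transfer the equality $\gamma_{t\{R2\}}=\gamma_{tR}$ to the smaller tree, and then the identification of the certificate ($S_{tR,2}$, $S_{adj}$, $S_1$, $L_{tR,1}$, $W_0$, or ``near stable'') that licenses one of $F_1,\ldots,F_7$. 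So the strategy is the correct one and coincides with the paper's.

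The genuine gap is that this remains a plan rather than a proof: essentially all of the content of the paper's argument lies in carrying out the case analysis (its Claims I--V), and none of it is carried out here. You do not show that the configurations $\delta_T(s)\ge 4$; $\delta_T(s)=3$ with $\delta_T(v)\ge 3$ or $\delta_T(v)=2$; and $\delta_T(s)=2$ with $\delta_T(v)=2$ or $\delta_T(v)\ge 3$ exhaust all possibilities and that in each one a specific operation applies, nor do you verify a single certificate membership. These verifications are where the work is, and they require carefully chosen extremal functions, not just ``suitable'' ones: for $F_2$ and $F_5$ one needs a $\gamma_{t\{R2\}}(T)$-function with $V_{0,1}=\emptyset$ and $|V_2|$ extremal (or a $\gamma_{tR}(T)$-function minimizing $f(w)$ subject to $|S(T)\cap V_2|$ and $|L_s(T)\cap V_0|$ maximum) whose restriction is shown to be optimal on $T'$, so that $s\in S_{tR,2}(T')$ or $v'\in L_{tR,1}(T'')$; for $F_6$ one must prove the attachment vertex is near stable by showing that a strictly cheaper near-TR2DF relative to $v$ on $T'$ would extend to a TR2DF on $T$ of weight below $\gamma_{t\{R2\}}(T)$; for $F_7$ one must rule out any $\gamma_{t\{R2\}}(T'')$-function positive at $w$. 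The hardest configuration --- $\delta_T(s)=2$, $\delta_T(v)\ge 3$, every child of $v$ a leaf, $w\notin S(T)$ and $f(w)=0$, where one must recognize $T_w\cong R_r$ and strip it via $F_1$ --- is only gestured at. A minor further slip: the paper's base cases are all stars (obtained from $P_2$ by $F_3$ and then repeated $F_2$) and the diameter-$3$ double stars via $F_4$ and $F_2$; in particular $P_3$ cannot arise from $P_2$ by $F_6$, since $F_6$ attaches two vertices. Until the case analysis and these certificate checks are actually written out, the statement is not proved.
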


\begin{proof}
First, we say that a tree $T$ belongs to the family $\mathcal{T}$ if $\gamma_{t\{R2\}}(T)=\gamma_{tR}(T)$. We proceed by induction on the order $n\geq 2$ of the trees $T\in \mathcal{T}$. If $T$ is a star, then $\gamma_{t\{R2\}}(T)=\gamma_{tR}(T)$. Thus, $T$ can be obtained from $P_2$ by first applying Operation $F_3$, thereby producing a path $P_3$ and then doing repeated applications of Operation $F_2$. Therefore, $T\in \mathcal{F}$. This establishes the base case. We assume now that $k\geq 3$ is an integer and that each tree $T'\in \mathcal{T}$ with $|V(T')| < k$ satisfies that $T'\in \mathcal{F}$. Let $T\in \mathcal{T}$ be a tree with $|V(T)|=k$  and we may assume that $diam(T)\geq 3$.

First, suppose that $diam(T)=3$. Therefore, $T$ is a double star $S_{x,y}$ for some integers $x \geq y \geq 1$. If $T\cong P_4$, then $T$ can be obtained from a path $P_2$ by applying Operation $F_4$. If $T\cong S_{x,y}$ with $x\geq y\geq 1$ ($T\not\cong P_4$), then $T$ can be obtained from a path $P_2$ by first applying Operation $F_4$, thereby producing a path $P_4$ and then doing repeated applications of Operation $F_2$ in both support vertices of $P_4$. Therefore, $T\in \mathcal{F}$.

We may now assume that $diam(T)\geq 4$, and we root the tree $T$ at a vertex $r$ located at the end of a longest path in $T$. Let $h$ be a vertex at maximum distance from $r$. Notice that, necessarily, $r$ and $h$ are leaves (and diametral vertices). Let $s$ be the parent of $h$; let $v$ be the parent of $s$; let $w$ be the parent of $v$; and let $z$ be the parent of $w$. Notice that all these vertices exist since $diam(T)\geq 4$, and it could happen $z=r$. Since $h$ is a vertex at maximum distance from the root $r$, every child of $s$ is a leaf. We proceed further with the following claims.\\

\noindent
{\bf Claim I.} If $\delta_T(s)\geq 4$, then $T\in \mathcal{F}$.\\

\noindent
{\bf Proof.} Suppose that $\delta_T(s)\geq 4$ and let $T'=T-h$. Hence, $\delta_{T'}(s)\geq 3$ and consequently, $s\in S_s(T')$, since every child of $s$ is a leaf vertex. Therefore, by Observation \ref{obs-strong-var-1}, there exists a $\gamma_{t\{R2\}}(T')$-function,  that assigns the weight $2$ to $s$. The function above can be extended to a TR2DF on $T$ by assigning the weight $0$ to  $h$, implying that $\gamma_{t\{R2\}}(T)\leq \gamma_{t\{R2\}}(T')$. Thus, by Proposition \ref{prop-inequalities}, Observation \ref{obs-lem-tR2-tR}, hypothesis and inequality above, we obtain $\gamma_{t\{R2\}}(T')\leq \gamma_{tR}(T')\leq \gamma_{tR}(T)=\gamma_{t\{R2\}}(T)\leq \gamma_{t\{R2\}}(T')$. Thus, we must have equality throughout this inequality chain. In particular $\gamma_{t\{R2\}}(T')=\gamma_{tR}(T')$. Applying the inductive hypothesis to $T'$, it follows that $T'\in \mathcal{F}$. Since $s\in S_s(T')$, by using Observation \ref{obs-strong-var-1}, we deduce that $s\in S_{tR,2}(T')$. Therefore, $T$ can be obtained from $T'$ by applying Operation $F_2$, and consequently, $T\in \mathcal{F}$. $(\square)$ \\

By Claim I, we may henceforth assume that $|N(x)\cap L(T)|=2$ for every strong support vertex $x$ of $T$.\\

\noindent
{\bf Claim II.} If $\delta_T(s)=3$ and $\delta_T(v)\geq 3$, then $T\in \mathcal{F}$.\\

\noindent
{\bf Proof.} Suppose that $\delta_T(s)=3$ and $\delta_T(v)\geq 3$. Thus, $s$ is a strong support vertex and has two leaf neighbors, say $h,h_1$. Moreover, observe that $v$ has at least one child, say $s'$, different from $s$, and also, $s'$ is either a leaf vertex or a support vertex of $T$. By using Theorem \ref{teo-tR2-tR}, there exists a $\gamma_{t\{R2\}}(T)$-function $f(V_0,V_1,V_2)$ with $V_{0,1}=\emptyset$, and without loss of generality, we assume that $|V_2|$ is maximum. Notice that $f$ is a $\gamma_{tR}(T)$-function as well. Now, we differentiate the following cases.\\

\noindent
\textit{Case 1.} $s'\in L(T)$. In such situation, $v\in S(T)$, and so, $f(v)=f(s)=2$ and $f(s')=f(h)=f(h_1)=0$. Let $T'=T-h$. Since $v,s\in S_{adj}(T')$, by Observation \ref{obs-sup-adj}, there exists a $\gamma_{t\{R2\}}(T')$-function $g$ satisfying that $g(v)=g(s)=2$. So, $g$ can be extended to a TR2DF on $T$ by assigning the weight $0$ to $h$. Hence $\gamma_{t\{R2\}}(T)\leq \gamma_{t\{R2\}}(T')$.
Consequently, by inequality above, Proposition \ref{prop-inequalities}, Observation \ref{obs-lem-tR2-tR} and hypothesis, we obtain $\gamma_{t\{R2\}}(T)\leq \gamma_{t\{R2\}}(T')\leq \gamma_{tR}(T')\leq \gamma_{tR}(T)=\gamma_{t\{R2\}}(T)$. Therefore, we must have equality throughout this inequality chain. In particular, $\gamma_{t\{R2\}}(T')=\gamma_{tR}(T')$. Applying the inductive hypothesis to $T'$, it follows that $T'\in \mathcal{F}$.

As another consequence of the equality chain above, we obtain that $\gamma_{tR}(T')=\gamma_{tR}(T)$. This implies that $f$ restricted to $V(T')$ is a $\gamma_{tR}(T')$-function, which means $s\in S_{tR,2}(T')$. Therefore, $T$ can be obtained from $T'$ by applying  Operation $F_2$, and consequently, $T\in \mathcal{F}$. $(\square)$ \\

\noindent
\textit{Case 2.} $s'\in S_s(T)$. Observe that $f(s')=f(s)=2$, $f(h)=0$ and $f(v)>0$. Let $T'=T-h$. Since $s'\in S_s(T')$, by Observation \ref{obs-strong-var-1}, there exists a $\gamma_{t\{R2\}}(T')$-function $g$ satisfying that $g(s')=2$. So, without loss of generality, we can assume that $g(v)>0$, implying that $g(s)=2$ and $g(h_1)=0$. Thus, $g$ can be extended to a TR2DF on $T$ by assigning the weight $0$ to $h$. Hence $\gamma_{t\{R2\}}(T)\leq \gamma_{t\{R2\}}(T')$. Consequently, by the inequality above, Proposition \ref{prop-inequalities}, Observation \ref{obs-lem-tR2-tR} and the hypothesis, we obtain $\gamma_{t\{R2\}}(T)\leq \gamma_{t\{R2\}}(T')\leq \gamma_{tR}(T')\leq \gamma_{tR}(T)=\gamma_{t\{R2\}}(T)$. Therefore, we must have equality throughout this inequality chain. In particular, $\gamma_{t\{R2\}}(T')=\gamma_{tR}(T')$. Applying the inductive hypothesis to $T'$, it follows that $T'\in \mathcal{F}$.

As another consequence of equality chain above, we obtain that $\gamma_{tR}(T')=\gamma_{tR}(T)$. This implies that $f$ restricted to $V(T')$ is a $\gamma_{tR}(T')$-function, and so, $s\in S_{tR,2}(T')$. Therefore, $T$ can be obtained from $T'$ by applying Operation $F_2$, which leads to $T\in \mathcal{F}$. $(\square)$\\

\noindent
\textit{Case 3.} $s'\in S(T)\setminus S_s(T)$. Notice that $T_{s'}\cong P_2$ and let $T'=T-T_{s'}$. Since $v\in SS(T')\cap N(S_s(T'))$, $f$ restricted to $V(T')$ is a TRDF on $T'$ and also $f(V(T_{s'}))=2$. Hence $\gamma_{tR}(T')\leq \gamma_{tR}(T)-2$. Moreover, any TR2DF on $T'$ we can extended to a TR2DF on $T$ by assigning the weight $1$ to $s'$ and its leaf-neighbor. Thus $\gamma_{t\{R2\}}(T)\leq \gamma_{t\{R2\}}(T')+2$. So, by these previous inequalities, Proposition \ref{prop-inequalities} and the hypothesis, we deduce $\gamma_{t\{R2\}}(T)\leq \gamma_{t\{R2\}}(T')+2\leq \gamma_{tR}(T')+2\leq \gamma_{tR}(T)=\gamma_{t\{R2\}}(T)$. Therefore, we must have equality throughout this inequality chain. In particular, $\gamma_{t\{R2\}}(T')=\gamma_{tR}(T')$. Applying the inductive hypothesis to $T'$, it follows that $T'\in \mathcal{F}$.

Moreover, as $v$ is adjacent to the support vertex $s$, every near-TR2DF relative to $v$ on $T'$ can be extended to a TR2DF on $T$ by assigning the weight $1$ to $s'$ and to its leaf-neighbor. So, $\gamma_{t\{R2\}}(T)\leq \gamma_{t\{R2\}}^n(T';v)+2$. In addition, if $v$ is not a near stable vertex of $T'$, then $\gamma_{t\{R2\}}^n(T';v)< \gamma_{t\{R2\}}(T')$, implying that $\gamma_{t\{R2\}}(T)\leq
\gamma_{t\{R2\}}^n(T';v)+2<
\gamma_{t\{R2\}}(T')+2$, which is a contradiction with the related equality noticed above. Therefore, the semi-support vertex $v$ is a near stable vertex of $T'$, and therefore, $T$ can be obtained from $T'$ by applying Operation $F_6$, which means $T\in \mathcal{F}$. $(\square)$\\

\noindent
{\bf Claim III.} If $\delta_T(s)=3$ and $\delta_T(v)=2$, then $T\in \mathcal{F}$.\\

\noindent
{\bf Proof.}
Suppose that $\delta_T(s)=3$ and $\delta_T(v)=2$. Thus, $s$ is a strong support vertex and has two leaf neighbors, say $h,h_1$. By Observation \ref{obs-strong}, there exists a $\gamma_{tR}(T)$-function $f$ such that $f(s)=2$ and $f(h)=f(h_1)=0$, which implies that $f(v)>0$. Let $T'=T-\{h,h_1\}$. Notice that the function $f'$, defined by $f'(s)=1$ and $f'(x)=f(x)$ whenever $x\in V(T')\setminus\{s\}$, is a TRDF on $T'$. Hence $\gamma_{tR}(T')\leq \omega(f')=\gamma_{tR}(T)-1$. Moreover, as $v\in S(T')$  and $\delta_{T'}(v)=2$, there exists a $\gamma_{t\{R2\}}(T')$-function $g$ satisfying that $g(s)=g(v)=1$. So, $g$ can be extended to a TR2DF on $T$ by assigning the weight $2$ to $s$ and the weight $0$ to $h$ and $h_1$, implying that $\gamma_{t\{R2\}}(T)\leq \gamma_{t\{R2\}}(T')+1$. Thus, by Proposition \ref{prop-inequalities}, the hypothesis and the inequalities above, we obtain that $\gamma_{t\{R2\}}(T')\leq \gamma_{tR}(T')\leq \gamma_{tR}(T)-1=\gamma_{t\{R2\}}(T)-1\leq \gamma_{t\{R2\}}(T')$. Therefore, we must have equality throughout this inequality chain. In particular, $\gamma_{t\{R2\}}(T')=\gamma_{tR}(T')$. Applying the inductive hypothesis to $T'$, it follows that $T'\in \mathcal{F}$.

Moreover, by the equality noted before, we deduce that $f'$ is a $\gamma_{tR}(T')$-function. Thus $s\in L_{tR,1}(T')$. Therefore, $T$ can be obtained from $T'$ by applying Operation $F_5$, and consequently, $T\in \mathcal{F}$. $(\square)$\\

\noindent
{\bf Claim IV.} If $\delta_T(s)=2$ and $\delta_T(v)=2$, then $T\in \mathcal{F}$.\\

\noindent
{\bf Proof.} Suppose that $\delta_T(s)=2$ and $\delta_T(v)=2$. By Theorem \ref{teo-tR2-tR}, there exists a $\gamma_{t\{R2\}}(T)$-function $f$ with $V_{0,1}=\emptyset$ and, without loss of generality, we assume that $|V_2|$ is minimum. Notice that $f$ is a $\gamma_{tR}(T)$-function as well, and also $f(s)+f(h)=2$.

First, we suppose that $f(w)>0$. Let $T'=T-T_s=T-\{s,h\}$.
Notice that $f$ restricted to $V(T')$ is a TRDF on $T'$. Hence $\gamma_{tR}(T')\leq f(V(T'))=\gamma_{tR}(T)-2$. Moreover, any TR2DF on $T'$ can be extended to a TR2DF on $T$ by assigning the weight $1$ to $s$ and $h$, implying that $\gamma_{t\{R2\}}(T)\leq \gamma_{t\{R2\}}(T')+2$. So, by the inequalities above, Proposition \ref{prop-inequalities} and the hypothesis, we obtain that $\gamma_{t\{R2\}}(T)\leq \gamma_{t\{R2\}}(T')+2\leq \gamma_{tR}(T')+2\leq \gamma_{tR}(T)=\gamma_{t\{R2\}}(T)$. Therefore, we must have equality throughout this inequality chain. In particular, $\gamma_{t\{R2\}}(T')=\gamma_{tR}(T')$. Applying the inductive hypothesis to $T'$, it follows that $T'\in \mathcal{F}$.

Moreover, a minimum weight near-TR2DF relative to $v$ on $T'$ can be extended to a TR2DF on $T$ by assigning to $s$ and $h$ the weight $1$. Hence $\gamma_{t\{R2\}}(T)\leq \gamma_{t\{R2\}}^n(T';v)+2$. If $v$ is not a near stable vertex of $T'$, then  $\gamma_{t\{R2\}}^n(T';v)<\gamma_{t\{R2\}}(T')$, implying that $\gamma_{t\{R2\}}(T)\leq \gamma_{t\{R2\}}^n(T';v)+2<\gamma_{t\{R2\}}(T')+2$, which is a contradiction with the related equality noted before. Therefore, $v$ is both a near stable vertex  and a leaf of $T'$. Thus, $T$ can be obtained from the tree $T'$ by applying Operation $F_6$, and consequently, $T\in \mathcal{F}$.

From now on, we suppose that $f(w)=0$. Hence $f(v)=f(s)=f(h)=1$. We consider the tree $T''=T-T_v=T-\{v,s,h\}$. Notice that any TR2DF on $T''$ can be extended to a TR2DF on $T$ by assigning the weight $1$ to $v$, $s$ and $h$, implying that $\gamma_{t\{R2\}}(T)\leq \gamma_{t\{R2\}}(T'')+3$.

On the other hand, notice that $f$ restricted to $V(T'')$ is a TRDF on $T''$. Hence $\gamma_{tR}(T'')\leq f(V(T''))=\gamma_{tR}(T)-3$. Consequently, by the previous inequalities, Proposition \ref{prop-inequalities} and the hypothesis, we obtain that $\gamma_{t\{R2\}}(T)\leq \gamma_{t\{R2\}}(T'')+3\leq \gamma_{tR}(T'')+3\leq \gamma_{tR}(T)=\gamma_{t\{R2\}}(T)$. Therefore, we must have equality throughout this inequality chain. In particular, $\gamma_{t\{R2\}}(T'')=\gamma_{tR}(T'')$. Also, note that $\gamma_{t\{R2\}}(T)=\gamma_{t\{R2\}}(T'')+3$. Applying the inductive hypothesis to $T''$, it follows that $T''\in \mathcal{F}$.

Moreover, we suppose there exists a $\gamma_{t\{R2\}}(T'')$-function $g$ satisfying that $g(w)>0$. Observe that $g$ can be extended to a TR2DF on $T$ by assigning the weight $0$ to $v$ and the weight $1$ to $s$ and $h$. Hence $\gamma_{t\{R2\}}(T)\leq \omega(g)+2=\gamma_{t\{R2\}}(T'')+2$, which is a contradiction with the related equality noticed above. Therefore $w\in W_0(T'')$ and so, $T$ can be obtained from the tree $T''$ by applying Operation $F_7$. Consequently, $T\in \mathcal{F}$. $(\square)$\\

\noindent
{\bf Claim V.} If $\delta_T(s)=2$ and $\delta_T(v)\geq 3$, then $T\in \mathcal{F}$.\\

\noindent
{\bf Proof.} Suppose that $\delta_T(s)=2$ and $\delta_T(v)\geq 3$. Clearly, $v$ has at least one child, say $s'$, different from $s$, implying that $s'$ is either a support vertex or a leaf vertex of $T$. Now, we differentiate the following cases.\\

\noindent
\textit{Case 1.} $s'\in S(T)$. By Theorem \ref{teo-tR2-tR}, there exists a $\gamma_{t\{R2\}}(T)$-function $f$ with $V_{0,1}=\emptyset$ and, without loss of generality, we assume that $|V_2|$ is maximum. Notice that $f$ is a $\gamma_{tR}(T)$-function as well, and also, $f(s)+f(h)=2$. As $s'\in S(T)$, $f$ restricted to $T'=T-\{s,h\}$ is a TRDF on $T'$. Hence $\gamma_{tR}(T') \leq \gamma_{tR}(T)-2$. Moreover, any TR2DF on $T'$ can be extended to a TR2DF on $T$ by assigning the weight $1$ to $s$ and $h$. Hence $\gamma_{t\{R2\}}(T) \leq \gamma_{t\{R2\}}(T')+2$. Therefore, by the inequalities above, Proposition \ref{prop-inequalities} and the hypothesis, we obtain $\gamma_{t\{R2\}}(T) \leq \gamma_{t\{R2\}}(T')+2 \leq \gamma_{tR}(T')+2\leq \gamma_{tR}(T) = \gamma_{t\{R2\}}(T)$. Thus, we must have equality throughout this inequality chain. In particular, $\gamma_{t\{R2\}}(T') = \gamma_{tR}(T')$. Applying the inductive hypothesis to $T'$, it follows that $T' \in \mathcal{F}$.

If $v\in S(T')$, then $v\in S_{adj}(T')$. So $T$ can be obtained from $T'$ by applying Operation $F_4$, and consequently, $T\in \mathcal{F}$.

If $v\notin S(T')$, then $v\in SS(T')$. Now, we prove that $v$ is a near stable vertex of $T'$. Notice that a minimum weight near-TR2DF relative to $v$ on $T'$ can be extended to a TR2DF on $T$ by assigning to $s$ and $h$ the weight $1$. So $\gamma_{t\{R2\}}(T)\leq \gamma_{t\{R2\}}^n(T';v)+2$. If $v$ is not a near stable vertex of $T'$, then  $\gamma_{t\{R2\}}^n(T';v)<\gamma_{t\{R2\}}(T')$, implying that $\gamma_{t\{R2\}}(T)\leq \gamma_{t\{R2\}}^n(T';v)+2<\gamma_{t\{R2\}}(T')+2$, which is a contradiction with the related equality noted before. Therefore, $v$ is a near stable vertex of $T'$, as desired. Thus, $T$ can be obtained from  $T'$ by applying Operation $F_6$, and consequently, $T\in \mathcal{F}$. $(\square)$\\

By the case above, we may henceforth assume that every child of $v$ is a leaf of $T$.\\

\noindent
\textit{Case 2.} $s'\in L(T)$ and $v\in S_s(T)$. We consider the tree $T'=T-s'$. Notice that $v\in S_{adj}(T')$. Hence, by Observation \ref{obs-sup-adj}, there exists a  $\gamma_{t\{R2\}}(T')$-function $f$ such that $f(v)=f(s)=2$. So, $f$ can be extended to a TR2DF on $T$ by assigning the weight $0$ to $s'$. Thus $\gamma_{t\{R2\}}(T)\leq \gamma_{t\{R2\}}(T')$ and by using Proposition \ref{prop-inequalities}, Observation \ref{obs-lem-tR2-tR} and the hypothesis, we obtain $\gamma_{t\{R2\}}(T) \leq \gamma_{t\{R2\}}(T') \leq \gamma_{tR}(T') \leq \gamma_{tR}(T) = \gamma_{t\{R2\}}(T)$. Therefore, we must have equality throughout this inequality chain. In particular, $\gamma_{t\{R2\}}(T') = \gamma_{tR}(T')$. Applying the inductive hypothesis to $T'$, it follows that $T' \in \mathcal{F}$. As another consequence of equality chain above, we obtain $\gamma_{tR}(T') = \gamma_{tR}(T)$. By Observation \ref{obs-strong}, there exists a $\gamma_{tR}(T)$-function $g$ such that $g(v)=2$ and $g(s')=0$. Since $\gamma_{tR}(T') = \gamma_{tR}(T)$, $g$ restricted to $V(T')$ is a $\gamma_{tR}(T')$-function. Hence $v\in S_{tR,2}(T')$. Therefore, $T$ can be obtained from $T'$ by applying Operation $F_2$, and consequently, $T\in \mathcal{F}$. $(\square)$\\

\noindent
\textit{Case 3.} $s'\in L(T)$ and $v\in S(T)\setminus S_s(T)$. First, we suppose that $w\in S(T)$. Let $T'=T-T_s=T-\{s,h\}$. By using a similar procedure as in Case 1 of Claim V ($v\in S(T')$),  we obtain $\gamma_{t\{R2\}}(T')=\gamma_{tR}(T')$ and $v\in S_{adj}(T')$.  Hence, by applying the inductive hypothesis to $T'$, it follows that $T' \in \mathcal{F}$. Therefore, $T$ can be obtained from $T'$ by Operation $F_4$, and consequently, $T\in \mathcal{F}$.

From now on, we assume that $w\notin S(T)$. Let $f(V_0,V_1,V_2)$ be a $\gamma_{tR}(T)$-function such that $f(w)$ is minimum among all $\gamma_{tR}(T)$-functions which satisfy that $|S(T)\cap V_2|$ and $|L_s(T)\cap V_0|$ are maximum. Hence $f(v)=f(s)=2$. Next, we analyse the two possible scenarios.\\

\noindent
\textit{Subcase 3.1.} $f(w)>0$. Since $N(w)\setminus \{z\}\subset S(T)\cup SS(T)$ and $f(v)=2$, it is easy to check that $f(w)=1$ and $N(w)\cap S_s(T)\neq \emptyset$. Let $v'\in N(w)\cap S_s(T)$, $N(v')\cap L(T)=\{h_1,h_2\}$ and $T''=T-\{h_1,h_2\}$.

Notice that the function $f'$, defined by $f'(v')=1$ and $f'(x)=f(x)$ whenever $x\in V(T'')\setminus\{v'\}$, is a TRDF on $T''$. Hence $\gamma_{tR}(T'')\leq \omega(f')=\gamma_{tR}(T)-1$.  Moreover, as $v\in S(T'')$ and $N(w)\setminus \{z,v'\}\subset S(T'')\cup SS(T'')$, there exists a $\gamma_{t\{R2\}}(T'')$-function $g$ such that $g(w)=g(v')=1$. So, $g$ can be extended to a TR2DF on $T$ by re-assigning the weight $2$ to $v'$ and by assigning the weight $0$ to $h_1$ and $h_2$, which implies that $\gamma_{t\{R2\}}(T)\leq \gamma_{t\{R2\}}(T'')+1$.

Thus, by Proposition \ref{prop-inequalities}, the inequalities above and the hypothesis, we obtain $\gamma_{t\{R2\}}(T'')\leq \gamma_{tR}(T'')\leq \gamma_{tR}(T)-1=\gamma_{t\{R2\}}(T)-1\leq \gamma_{t\{R2\}}(T'')$. Therefore, we must have equality throughout this inequality chain. In particular, $\gamma_{t\{R2\}}(T'')=\gamma_{tR}(T'')$. Applying the inductive hypothesis to $T''$, it follows that $T''\in \mathcal{F}$. Also, by the equality noted before, we deduce that $f'$ is a $\gamma_{tR}(T'')$-function. Thus $v'\in L_{tR,1}(T'')$. Therefore, $T$ can be obtained from $T''$ by applying Operation $F_5$, and consequently, $T\in \mathcal{F}$. $(\square)$\\

\noindent
\textit{Subcase 3.2.} $f(w)=0$. Notice that $N(w)\cap S_s(T)=\emptyset$. Now, we consider that $w$ has a child, say $v'$, different from $v$. First, we suppose that $S_s(T)\cap V(T_{v'})\neq \emptyset$. Let $x\in S_s(T)\cap V(T_{v'})$, $h_x\in N(x)\cap L(T)$ and $T''=T-h_x$. Again, by using a similar procedure as in Case 2 of Claim V, we obtain $\gamma_{t\{R2\}}(T'')=\gamma_{tR}(T'')$ and $x\in S_{tR,2}(T'')$.  Hence, by applying the inductive hypothesis to $T''$, it follows that $T'' \in \mathcal{F}$. Therefore, $T$ can be obtained from $T''$ by applying Operation $F_2$, and consequently, $T\in \mathcal{F}$.

%Thus, we may assume that $S_s(T)\cap V(T_w)=\emptyset$. If $T_{v'}$ is isomorphic to $P_2$, then, by using a similar reasoning as in the Claim V (Case 1, $v\in SS(T')$), we obtain that the tree $T''=T-T_{v'}\in \mathcal{F}$. Therefore, $T$ can be obtained from $T''$ by Operation $F_6$, and consequently, $T\in \mathcal{F}$. If $T_{v'}$ is isomorphic to $P_3$, then, by using a similar reasoning as in the Claim IV ($f(w)=0$), we obtain that the tree $T''=T-T_{v'}\in \mathcal{F}$. Therefore, $T$ can be obtained from $T''$ by Operation $F_7$, and consequently, $T\in \mathcal{F}$.

Thus, we may assume that $S_s(T)\cap V(T_w)=\emptyset$. If $T_{v'}$ is isomorphic to $P_2$ or $P_3$, then, by using a similar reasoning as in Case 1 of Claim V ($v\in SS(T')$) or Claim IV ($f(w)=0$), respectively, we obtain that $T''=T-T_{v'}\in \mathcal{F}$. Therefore, $T$ can be obtained from $T''$ by applying Operation $F_6$ or Operation $F_7$, respectively. Consequently, $T\in \mathcal{F}$.

Hence, we may assume that  for every child $x$ of $w$, the tree $T_x$ is not isomorphic to $P_2$ or $P_3$. Thus, it is easy to check that $T_w\cong R_r$. Let $T'''=T-T_w$. Since $f(w)=0$ and $w\notin S(T)$, the function $f$ restricted to $V(T''')$ is a TRDF on $T'''$. So, $\gamma_{tR}(T''')\leq f(V(T'''))=\gamma_{tR}(T)-4r$. Moreover, any TR2DF on $T'''$ can be extended to a TR2DF on $T$ by assigning the weight $2$ to every support vertex and the weight $0$ to another vertices of $T_w$. Thus, $\gamma_{t\{R2\}}(T)\leq \gamma_{t\{R2\}}(T''')+4r$, and by using the inequalities above, Proposition \ref{prop-inequalities} and the hypothesis, we obtain $\gamma_{t\{R2\}}(T)\leq \gamma_{t\{R2\}}(T''')+4r \leq \gamma_{tR}(T''')+4r\leq \gamma_{tR}(T) = \gamma_{t\{R2\}}(T)$. Therefore, we must have equality throughout this inequality chain. In particular, $\gamma_{t\{R2\}}(T''') = \gamma_{tR}(T''')$. Applying the inductive hypothesis to $T'''$, it follows that $T'''\in \mathcal{F}$. Therefore, $T$ can be obtained from $T'''$ by applying Operation $F_1$, and consequently, $T\in \mathcal{F}$, which completes the proof.
\end{proof}

As an immediate consequence of Theorems \ref{lem-right-1} and \ref{lem-left-1}, we have the following characterization.

\begin{theorem}\label{teo-char-R2-R}
A tree $T$ of order $n\geq 2$ satisfies that $\gamma_{t\{R2\}}(T)=\gamma_{tR}(T)$ if and only if $T \in \mathcal{F}$.
\end{theorem}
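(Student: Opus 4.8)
The plan is immediate: the statement is nothing more than the conjunction of the two implications that have already been proved, so I would derive it by simply quoting Theorems \ref{lem-right-1} and \ref{lem-left-1} and noting that together they constitute the desired biconditional. Concretely, I would first invoke Theorem \ref{lem-right-1} to obtain the ``if'' direction: every $T\in\mathcal{F}$ satisfies $\gamma_{t\{R2\}}(T)=\gamma_{tR}(T)$, this being the content of the induction on the number of construction operations. Then I would invoke Theorem \ref{lem-left-1} for the ``only if'' direction: any tree $T$ with $\gamma_{t\{R2\}}(T)=\gamma_{tR}(T)$ is obtainable from $P_2$ by a sequence of the operations $F_1,\dots,F_7$, hence belongs to $\mathcal{F}$. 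Chaining the two gives, for every tree $T$ of order $n\geq 2$, the equivalence $\gamma_{t\{R2\}}(T)=\gamma_{tR}(T)\iff T\in\mathcal{F}$.

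There is no real obstacle at this final step; it is a one-line corollary, and all the difficulty was absorbed into the two preceding theorems. If I were to identify where the substantive effort lies, it is in Theorem \ref{lem-left-1}: rooting $T$ at the end of a longest path, selecting a deepest leaf $h$ together with its chain of ancestors $s,v,w,z$, and running the case analysis on the degrees $\delta_T(s)$ and $\delta_T(v)$ to show that in each case one of the seven operations detaches an appropriate fragment and leaves a strictly smaller tree $T'$ that still satisfies $\gamma_{t\{R2\}}(T')=\gamma_{tR}(T')$, so that the inductive hypothesis applies. The descent of the equality to $T'$ is what forces the matching upper and lower bounds, obtained via Proposition \ref{prop-inequalities}, Observation \ref{obs-lem-tR2-tR}, and the near-stable vertex machinery together with Observations \ref{obs-sup-adj}, \ref{obs-strong}, and \ref{obs-strong-var-1}. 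But for the present theorem itself, no further argument is needed beyond assembling these two halves.
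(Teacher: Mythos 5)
Your proposal is correct and matches the paper exactly: the paper states this theorem as an immediate consequence of Theorems \ref{lem-right-1} and \ref{lem-left-1}, which is precisely the assembly you describe. Nothing further is needed.
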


To conclude this subsection, we next give a characterization of trees $T$ with $\gamma_{t\{R2\}}(T)=2\gamma_t(T)$. In \cite{henning2006}, a family $\mathcal{T}$ of trees $T$ with $\gamma_t(T)=\gamma(T)$ were characterized. Hence, as a consequence of the statement above and Theorems \ref{teo-tR2-2t} and \ref{teo-char-R2-R}, the next characterization follows.

\begin{theorem}\label{teo-char-tR2-2t}
A tree $T$ of order $n\geq 2$ satisfies that $\gamma_{t\{R2\}}(T)=2\gamma_t(T)$ if and only if $T \in \mathcal{F}\cap \mathcal{T}$.
\end{theorem}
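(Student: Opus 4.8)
The plan is to obtain this characterization with essentially no new work, simply by assembling three facts that are already available: the equivalence in Theorem~\ref{teo-tR2-2t}, the tree characterization in Theorem~\ref{teo-char-R2-R}, and the defining property of $\mathcal{T}$, namely that (by \cite{henning2006}) $\mathcal{T}$ is precisely the class of trees $T$ with $\gamma_t(T)=\gamma(T)$. No induction or structural case analysis should be required; all of that has been absorbed into the preceding subsection.

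For the forward implication I would start with a tree $T$ of order $n\ge 2$ satisfying $\gamma_{t\{R2\}}(T)=2\gamma_t(T)$. Applying the ``only if'' direction of Theorem~\ref{teo-tR2-2t} gives the two simultaneous equalities $\gamma_{t\{R2\}}(T)=\gamma_{tR}(T)$ and $\gamma_t(T)=\gamma(T)$. The first equality, together with Theorem~\ref{teo-char-R2-R}, yields $T\in\mathcal{F}$; the second equality is exactly the membership condition for $\mathcal{T}$. Hence $T\in\mathcal{F}\cap\mathcal{T}$.

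For the converse I would take $T\in\mathcal{F}\cap\mathcal{T}$. From $T\in\mathcal{F}$ and Theorem~\ref{teo-char-R2-R} we get $\gamma_{t\{R2\}}(T)=\gamma_{tR}(T)$, and from $T\in\mathcal{T}$ we get $\gamma_t(T)=\gamma(T)$. Feeding both equalities into the ``if'' direction of Theorem~\ref{teo-tR2-2t} gives $\gamma_{t\{R2\}}(T)=2\gamma_t(T)$, as desired.

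I do not expect a real obstacle here; the only points needing a moment's care are bookkeeping ones. One should check that the order hypothesis $n\ge 2$ is compatible with every statement invoked (it is: Theorem~\ref{teo-char-R2-R} is stated for trees of order $n\ge 2$, and $\mathcal{F}$ is built from $P_2$). One should also note the harmless overloading of the symbol $\mathcal{T}$: in this final statement it denotes the family of \cite{henning2006} of trees with $\gamma_t=\gamma$, not the ad hoc notation $\mathcal{T}=\{T:\gamma_{t\{R2\}}(T)=\gamma_{tR}(T)\}$ used locally inside the proof of Theorem~\ref{lem-left-1}. With these caveats acknowledged, the proof reduces to the two short implication chains above.
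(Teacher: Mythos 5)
Your proposal is correct and matches the paper's own argument: the paper also derives this result immediately by combining Theorem~\ref{teo-tR2-2t}, Theorem~\ref{teo-char-R2-R}, and the characterization from \cite{henning2006} of the family $\mathcal{T}$ of trees with $\gamma_t(T)=\gamma(T)$. Your remark about the overloaded symbol $\mathcal{T}$ is a fair observation, but there is no substantive difference between your route and the paper's.
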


We must remark that our characterization is strongly based on the computability of the sets $S_{tR,2}(T_i)$, $W_0(T_i)$, $S_1(T_i)$ and $L_{tR,1}(T_i)$, for a given tree $T_i$, in order to construct a new element $T_{i+1}$ of the family $\mathcal{F}$. It is probably hard to find such sets for the tree $T_i$ regardless which is the operation made to construct such $T_i$. In this sense, as a continuation of this work, it would be desirable a future discussion on how one of these sets can be obtained for a given tree $T_i$, and on whether a connection between such set in $T_i$ and the corresponding one in $T_{i+1}$ exists.

\section{Computational results}

In order to present our complexity results we need to introduce the following construction. Given a graph $G$ of order $n$ and $n$ copies of the star graph $K_{1,4}$, the graph $H_G$ is constructed by adding edges between the $i^{th}$-vertex of $G$ and one leaf vertex of the $i^{th}$-copy of $K_{1,4}$. See Figure \ref{fig-complex} for an example.

\begin{figure}[h]
\centering
\begin{tikzpicture}[scale=.7, transform shape]

\node [draw, shape=circle] (a1) at  (0,0) {};
\node [draw, shape=circle] (a2) at  (3,0) {};
\node [draw, shape=circle] (a3) at  (6,0) {};
\node [draw, shape=circle] (a4) at  (9,0) {};

\node [draw, shape=circle] (a5) at  (1,1) {};
\node [draw, shape=circle] (a6) at  (4,1) {};
\node [draw, shape=circle] (a7) at  (7,1) {};
\node [draw, shape=circle] (a8) at  (10,1) {};

\node [draw, shape=circle] (b13) at  (1.5,2.5) {};
\node [draw, shape=circle] (b14) at  (4.5,2.5) {};
\node [draw, shape=circle] (b15) at  (7.5,2.5) {};
\node [draw, shape=circle] (b16) at  (10.5,2.5) {};

\node [draw, shape=circle] (a17) at  (0.5,4) {};
\node [draw, shape=circle] (a18) at  (3.5,4) {};
\node [draw, shape=circle] (a19) at  (6.5,4) {};
\node [draw, shape=circle] (a20) at  (9.5,4) {};

\node [draw, shape=circle] (b17) at  (2.5,4) {};
\node [draw, shape=circle] (b18) at  (5.5,4) {};
\node [draw, shape=circle] (b19) at  (8.5,4) {};
\node [draw, shape=circle] (b20) at  (11.5,4) {};

\node [draw, shape=circle] (c17) at  (1.5,4) {};
\node [draw, shape=circle] (c18) at  (4.5,4) {};
\node [draw, shape=circle] (c19) at  (7.5,4) {};
\node [draw, shape=circle] (c20) at  (10.5,4) {};

\draw(a5)--(a1)--(a2)--(a3)--(a4)--(a8);
\draw(a2)--(a6);
\draw(a3)--(a7);
\draw(c17)--(b13)--(a5);
\draw(c18)--(b14)--(a6);
\draw(c19)--(b15)--(a7);
\draw(c20)--(b16)--(a8);
\draw(a17)--(b13)--(b17);
\draw(a18)--(b14)--(b18);
\draw(a19)--(b15)--(b19);
\draw(a20)--(b16)--(b20);

\draw (a1) .. controls (0.3,-0.5) and (5.7,-0.5) .. (a3);
\draw (a2) .. controls (3.3,-0.5) and (8.7,-0.5) .. (a4);
\end{tikzpicture}
\caption{The graph $H_G$ where $G$ is a complete graph minus one edge.}
\label{fig-complex}
\end{figure}
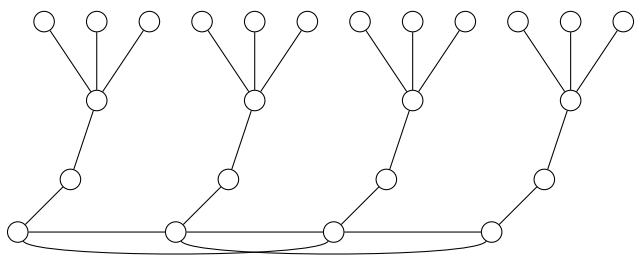

It is well-known that the domination Problem is NP-complete, even when restricted to bipartite graphs (see Dewdney \cite{NP5}) or chordal graphs (see Booth \cite{NP1} and Booth and Johnson \cite{NP2}). We use this result to prove the main result of this section, which is the complexity analysis of the following decision problem (total Roman  $\{2\}$-dominating function Problem (TR2DF-Problem for short)). To this end, we will demonstrate a polynomial time reduction of the domination Problem  to our TR2DF-Problem.\\

\begin{tabular}{l}
\hline
  \mbox{TR2DF-Problem }\\
  \mbox{Instance: A non trivial graph $H$ and a positive integer $j\leq |V(H)|$.}\\
  \mbox{Question: Does $H$ have a TR2DF of weight $j$ or less?}\\
\hline
\end{tabular}\\

\begin{observation}\label{obs-super-strong}
Let $G$ be a graph different from a star graph. If $v\in S_s(G)$  such that $|N(v)\cap L(G)|\geq 3$, then $f(v)=2$ for every $\gamma_{t\{R2\}}(G)$-function $f$.

\end{observation}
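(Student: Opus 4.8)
The plan is to argue by contradiction: suppose $f=f(V_0,V_1,V_2)$ is a $\gamma_{t\{R2\}}(G)$-function with $f(v)\le 1$, and build from it a TR2DF of strictly smaller weight. Write $h_1,\dots,h_k$ for the leaf neighbours of $v$, so $k\ge 3$ by hypothesis, and recall $N(h_i)=\{v\}$ for each $i$.

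First I would rule out $f(v)=0$. If $f(v)=0$, pick any leaf $h_i$: either $f(h_i)=0$, and then $f(N(h_i))=f(v)=0<2$ contradicts the Roman $\{2\}$-condition at $h_i$; or $f(h_i)\ge 1$, and then $h_i\in V_1\cup V_2$ has no neighbour of positive weight (its only neighbour is $v$), contradicting the totality condition. Hence $f(v)=1$.

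With $f(v)=1$, the Roman $\{2\}$-condition at each $h_i$ forces $f(h_i)\ge 1$ (otherwise $f(N(h_i))=f(v)=1<2$), so $\sum_{i=1}^{k}f(h_i)\ge k\ge 3$. Now define $f'$ by $f'(v)=2$, $f'(h_1)=1$, $f'(h_i)=0$ for $2\le i\le k$, and $f'(x)=f(x)$ otherwise. The key step is checking that $f'$ is again a TR2DF: the vertices $h_2,\dots,h_k$ now have value $0$ but see $f'(v)=2$; the vertices $v$ and $h_1$ have positive value and are adjacent to one another; and for every remaining vertex $x$ nothing is damaged, because the only values that decreased belong to the leaves $h_i$, whose sole neighbour is $v$, so they never lie in $N(x)$ for $x\ne v$, while $f'(v)=2\ge f(v)$ can only help a vertex of $V_0$ adjacent to $v$ and a vertex of $V_1\cup V_2$ that used $v$ as its positive neighbour. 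Finally a weight count gives $\omega(f')=\omega(f)+2-\sum_{i=1}^{k}f(h_i)\le \omega(f)-1<\omega(f)$, contradicting the minimality of $f$; therefore $f(v)=2$.

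The routine but mildly fiddly part is the case analysis verifying that $f'$ is a valid TR2DF — confirming that no vertex of $V_0$ loses a witnessing pair and no vertex of $V_1\cup V_2$ loses its positive neighbour — but this is immediate once one observes that each $h_i$ is a leaf attached only to $v$, so I do not anticipate a genuine obstacle. I note in passing that the hypothesis ``$G$ different from a star graph'' is not actually needed for this observation: keeping $h_1$ at value $1$ always supplies the positive neighbour required by the totality condition at $v$.
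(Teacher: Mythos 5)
Your proof is correct: the paper states this as an Observation with no proof at all, and your exchange argument (the leaves force $f(v)\geq 1$, and if $f(v)=1$ then every leaf neighbour must receive weight at least $1$, so reassigning $f'(v)=2$, $f'(h_1)=1$ and $0$ to the remaining leaves yields a TR2DF of weight at most $\omega(f)-1$) is precisely the routine argument the authors evidently had in mind, and your verification that $f'$ remains a TR2DF is sound since the leaves have $v$ as their only neighbour. Your closing remark is also accurate: the ``not a star'' hypothesis is superfluous for this particular observation, since keeping one leaf at weight $1$ supplies the totality witness for $v$ even when $G=K_{1,n-1}$.
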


\begin{theorem}
TR2DF-Problem is NP-complete, even when restricted to bipartite or chordal graphs.
\end{theorem}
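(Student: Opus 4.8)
The plan is to prove membership in NP and then reduce the domination problem, which is NP-complete even on bipartite graphs \cite{NP5} and on chordal graphs \cite{NP1,NP2}, to the TR2DF-Problem in polynomial time. Membership in NP is immediate: given a function $f:V(H)\to\{0,1,2\}$, one verifies in polynomial time that $f$ is a TR2DF and that $\omega(f)\le j$.

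For the reduction, take an instance $(G,k)$ of the domination problem with $|V(G)|=n$ and form the graph $H_G$ described just before Figure~\ref{fig-complex}: attach, by one edge, a leaf $\ell_i$ of the $i$-th copy of $K_{1,4}$ to the $i$-th vertex $v_i$ of $G$; call the center of that copy $c_i$ and note that its three remaining leaves stay leaves in $H_G$. Then $|V(H_G)|=6n$ and $H_G$ has no isolated vertex. First I would record that $H_G$ inherits the relevant class: if $G$ is bipartite, colour each $c_i$ with the colour of $v_i$ and all gadget leaves with the opposite colour, obtaining a proper $2$-colouring of $H_G$; and if $G$ is chordal, then since each edge $v_i\ell_i$ is a bridge and each attached copy of $K_{1,4}$ is a tree, every cycle of $H_G$ lies inside $G$, so $H_G$ is chordal. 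The construction is clearly polynomial.

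The heart of the argument is the identity $\gamma_{t\{R2\}}(H_G)=3n+\gamma(G)$. For the upper bound, from a dominating set $D$ of $G$ with $|D|=\gamma(G)$ define $f$ by $f(c_i)=2$, $f(\ell_i)=1$, $f(v_i)=1$ if $v_i\in D$, and $f(x)=0$ otherwise; one checks directly that $f$ is a TR2DF (the total condition holds at each $c_i$ and at each $v_i\in D$ through $\ell_i$, and at each $\ell_i$ through $c_i$; the Roman condition holds at each true leaf through $c_i$, and at each $v_j\notin D$ because $f(\ell_j)=1$ together with a positive value on some neighbour of $v_j$ in $D$). Hence $\gamma_{t\{R2\}}(H_G)\le 3n+\gamma(G)$. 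For the lower bound, let $f$ be a $\gamma_{t\{R2\}}(H_G)$-function; since each $c_i$ is a strong support vertex with three leaf neighbours, Observation~\ref{obs-super-strong} forces $f(c_i)=2$ for all $i$. I would then normalize $f$ so that inside each gadget $f(\ell_i)=1$ and $f$ vanishes on the three true leaves: if $f(\ell_i)\ge 2$ then minimality forces $f(v_i)=0$ and one shifts a unit from $\ell_i$ to $v_i$; if $f(\ell_i)=0$ then the total condition at $c_i$ forces a positive true leaf in that gadget, and one moves that unit to $\ell_i$; neither move raises the weight or destroys the TR2DF property (each true leaf keeps $c_i$ with value $2$ as a neighbour). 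After normalization $\omega(f)=3n+\sum_i f(v_i)$, and putting $D=\{v_i: f(v_i)\ge 1\}$, the Roman condition at any $v_j\notin D$ reads $f(\ell_j)+f(N_G(v_j))\ge 2$, i.e. $f(N_G(v_j))\ge 1$, so $v_j$ has a neighbour in $D$; thus $D$ dominates $G$ and $\omega(f)\ge 3n+|D|\ge 3n+\gamma(G)$.

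Combining the two bounds, $G$ has a dominating set of size at most $k$ if and only if $H_G$ has a TR2DF of weight at most $3n+k$; since $k\le n$ we have $3n+k\le 4n\le 6n=|V(H_G)|$, so $(H_G,3n+k)$ is a valid instance of the TR2DF-Problem, and the reduction restricts to bipartite (resp. chordal) inputs, giving NP-completeness on both families. The step I expect to be the main obstacle is the normalization of an arbitrary $\gamma_{t\{R2\}}(H_G)$-function to the canonical shape $f(c_i)=2$, $f(\ell_i)=1$ on every gadget: the exchange arguments must be arranged so that the TR2DF property is preserved and the weight never increases, whereas everything else is routine verification, a direct appeal to Observation~\ref{obs-super-strong}, or the known hardness of domination on the two graph classes.
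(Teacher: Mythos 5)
Your proposal is correct and follows essentially the same route as the paper: the same gadget $H_G$, the same key identity $\gamma_{t\{R2\}}(H_G)=\gamma(G)+3|V(G)|$ established via Observation~\ref{obs-super-strong}, and the same reduction from domination on bipartite/chordal graphs. The only difference is cosmetic: you normalize an optimal function by explicit weight-shifting exchanges inside each gadget, whereas the paper reaches the same conclusion by choosing a $\gamma_{t\{R2\}}(H_G)$-function with $|V_2|$ minimum.
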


\begin{proof}
The problem is clearly in NP since verifying that a given function is indeed a TR2DF can be done in polynomial time.

We consider a graph $G$ without isolated vertices of order $n$ and construct the graph $H_G$. It is easy to see that this construction can be accomplished in polynomial time.  Also, notice that if the graph $G$ is a bipartite or chordal graph, then so too is $H_G$.

We next prove that $\gamma_{t\{R2\}}(H_G)=\gamma(G)+3|V(G)|$. For this, we first consider the function $f'(V_0',V_1',V_2')$ defined by $V_1'=A\cup SS(H_G)$ and $V_2'=S(H_G)$, where $A$ is a $\gamma(G)$-set. Notice that $f'$ is a TR2DF on $H_G$. So $\gamma_{t\{R2\}}(H_G)\leq |A \cup SS(H_G)|+2|S(H_G)|=\gamma(G)+3|V(G)|$.

On the other hand, let $v\in V(G)\subset V(H_G)$, and by $K_{1,4}^v$ we denote the copy of $K_{1,4}$ added to $v$. Let $s_v$ and $u_v$ be the support vertex and semi-support vertex of $H_G$ respectively, belonging to the copy $K_{1,4}^v$. Let $f(V_0,V_1,V_2)$ be a $\gamma_{t\{R2\}}(G)$-function satisfying that $|V_2|$ is minimum. Since $H_G$ is different from a star, and every support vertex is adjacent to three leaves, by Observation \ref{obs-super-strong}, we obtain that $f(S(H_G))=2|S(H_G)|$. Consequently, $f(s_v)=2$ and $f(V(K_{1,4}^v))\geq 3$. Hence, we can assume that $f(u_v)>0$. If $f(u_v)=2$, then the function $f''(V_0'',V_1'',V_2'')$, defined by $f''(u_v)=1$, $f''(v)=\min\{f(v)+1,2\}$ and $f''(x)=f(x)$ whenever $x\in V(H_G)\setminus \{u_v,v\}$, is a TR2DF on $H_G$ of weight $\gamma_{t\{R2\}}(H_G)$ and $|V_2''|<|V_2|$, which is a contradiction. Hence $f(u_v)=1$ for every $v\in V(G)$.

Notice that each vertex $v\in V(G)$ is adjacent to exactly one semi-support vertex of $H_G$. As $SS(H_G)\subseteq V_1$, it follows that $V(G)\subseteq V_0\cup V_1$ and also, $V_1\cap V(G)$ is a dominating set of $G$. Thus, $\gamma_{t\{R2\}}(H_G)=\omega(f)=|V_1\cap V(G)|+|SS(H_G)|+2|S(H_G)|\geq \gamma(G)+3|V(G)|$. As a consequence, it follows that $\gamma_{t\{R2\}}(H_G) = \gamma(G) + 3|V(G)|$, as required.

Now, for $j=k+3|V(G)|$, it is readily seen that $\gamma_{t\{R2\}}(H_G)\leq j$ if and only if $\gamma(G)\leq k$, which completes the proof.
\end{proof}

As a consequence of the result above we conclude that finding the total Roman $\{2\}$-domination number of graphs is NP-hard.

\end{document}